\newtheorem{theo}{Theorem}[section]
\newtheorem{lemma}[theo]{Lemma}
\newtheorem{defn}[theo]{Definition}
\newtheorem{cor}[theo]{Corollary}
\newenvironment{proof}{\noindent {\sc Proof}.}
                {\phantom{a} \hfill \framebox[2.2mm]{ } \bigskip}
\newcommand{\NN}{\mathbb{N}}
\newcommand{\ZZ}{\mathbb{Z}}
\def\int{{\rm int}}
\newcommand{\T}{{\mathcal{T}}}
\renewcommand{\cal}{\mathcal}
\newcommand{\la}{\langle}
\newcommand{\ra}{\rangle}
\newcommand{\supp}{{\rm supp}}
\newcommand{\cG}{\mathcal{G}}
\newcommand\mult{\operatorname{mult}}
\title{Resolvable cycle decompositions \\
of complete multigraphs \\ and complete equipartite multigraphs \\
via layering and detachment}
\author{Amin Bahmanian\footnote{Campus Box 4520, Normal, Illinois, 61790-4520, USA.} \\
{\small  Illinois State University} \\ \\
Mateja \v{S}ajna\footnote{Department of Mathematics and Statistics, 585 King Edward Avenue, Ottawa, ON, K1N 6N5,Canada.}\\
{\small University of Ottawa}}
\begin{document}
\maketitle \baselineskip 17pt

\begin{abstract}
We construct new resolvable decompositions of complete  multigraphs and complete equipartite multigraphs into cycles of variable lengths (and a perfect matching if the vertex degrees are odd). We develop two techniques: {\em layering}, which allows us to obtain 2-factorizations of complete  multigraphs from existing 2-factorizations of complete graphs, and {\em detachment}, which allows us to construct resolvable cycle decompositions of complete equipartite multigraphs from existing resolvable cycle decompositions of complete multigraphs. These techniques are applied to obtain new 2-factorizations of a specified type for both complete multigraphs and complete equipartite multigraphs, with the emphasis on new solutions to the Oberwolfach Problem and the Hamilton-Waterloo Problem. In addition, we show existence of some  $\alpha$-resolvable cycle decompositions.

\medskip
\noindent {\em Keywords:} Complete multigraph; complete equipartite multigraph; cycle decomposition; $\alpha$-resolvable cycle decomposition; 2-factorization; detachment; layering; Oberwolfach Problem, Hamilton-Waterloo Problem.
\end{abstract}

\newpage

\tableofcontents

\section{Introduction}

The graphs in this paper will be finite and undirected, often with parallel edges. Of particular interest are complete multigraphs and complete equipartite multigraphs; $\lambda K_n$ and $\lambda K_{n \times m}$ will denote the complete $\lambda$-fold graph with $n$ vertices and the complete $\lambda$-fold $n$-partite graph with $m$ vertices in each part, respectively.

A set $\{\cG_1,\ldots,\cG_t\}$ of subgraphs of a graph $\cG$ is called a {\em decomposition} of $\cG$ if $E(\cG)$ partitions into  $E(\cG_1),\ldots,E(\cG_t)$; if this is the case, we write $\cG=\cG_1 \oplus \ldots \oplus \cG_t$.
A {\em cycle decomposition} of $\cal G$ is a decomposition of $\cG$ into cycles, or into cycles and a single 1-factor in case every vertex in $\cal G$ is of odd degree. A cycle decomposition consisting of $k$ cycles of lengths $c_1,\dots,c_k$, respectively, is also called a {\em $(c_1,\ldots,c_k)$-cycle decomposition}.
A cycle decomposition of $\cal G$ is said to be {\em resolvable} if the set of cycles can be partitioned into subsets called {\em parallel classes} so that every vertex of $\cal G$ lies in exactly one cycle of each parallel class. A resolvable cycle decomposition can be viewed as a {\em 2-factorization}, that is, decomposition into {\em 2-factors} (spanning 2-regular subgraphs), or into 2-factors and a single 1-factor. This view is more convenient when we wish to specify not only cycle lengths, but also the types (isomorphism classes) of the 2-factors.

In 1980, Alspach \cite{Alspach-DM} asked whether the obvious necessary conditions for $K_n$ to admit a $(c_1,\ldots,c_k)$-cycle decomposition are also sufficient. In a recent breakthrough, this question was answered in the affirmative by Bryant et al \cite{MR3214677}. More recently still, Bryant et al \cite{MR3758242} extended this result to complete multigraphs, thus determining necessary and sufficient conditions for $\lambda K_n$ to admit a $(c_1,\ldots,c_k)$-cycle decomposition.

In contrast to general cycle decompositions, the problem of resolvable cycle decompositions of complete (multi)graphs is far from being completely solved. The focus in the literature is on 2-factorizations, where the types of the 2-factors, not merely cycle lengths, are specified. Thus, numerous particular 2-factorizations  of complete graphs are known to exist or not exist \cite{MR2246267}. By far the greatest number of results are known on  two special cases, the Oberwolfach Problem, where each 2-factor is of the same type, and the Hamilton-Waterloo Problem, where each 2-factor is of one of two possible types. The most prominent results to these two problems will be summarized in later sections, where needed.

In an earlier paper \cite{MR3487143}, the authors of the present paper developed a technique for constructing a $(c_1m,\ldots,c_km)$-cycle decomposition of $\lambda K_{n \times m}$ from a $(c_1,\ldots,c_k)$-cycle decomposition of $\lambda m K_n$. This method, detachment,  was applied to the decompositions guaranteed by \cite{MR3758242}, resulting in many new cycle decompositions of complete equipartite multigraphs. In the present paper, we seek to extend our earlier work to the resolvable case. To our knowledge, only one earlier paper \cite{MR1865547} used detachment to construct resolvable cycle decompositions.

In addition to detachment, which allows us to construct resolvable cycle decompositions of complete equipartite multigraphs from existing resolvable cycle decompositions of complete multigraphs, we develop second technique, layering, which allows us to obtain 2-factorizations of complete  multigraphs from existing 2-factorizations of complete (multi)graphs. The combination of these two techniques provides a general framework that can be applied to a wide variety of problems on resolvable cycle decomposition. We use it to construct new resolvable decompositions of complete  multigraphs and complete equipartite multigraphs into cycles of variable lengths, with the emphasis on new solutions to the  Oberwolfach and Hamilton-Waterloo Problems.

This paper is organized as follows. In Section~\ref{termsec}, we introduce the required terminology, and in Section \ref{detachsec}, the detachment technique is discussed. This is followed by an immediate application to $\alpha$-resolvability (a generalization of resolvability), which yields the only known results on $\alpha$-resolvable cycle decompositions of complete equipartite multigraphs to date. The layering technique is developed in Section~\ref{layerSec}, and new results on the  Oberwolfach Problem for complete equipartite multigraphs with cycles of variable lengths follow in Section~\ref{OPSec}. No results on the Hamilton-Waterloo Problem for $\lambda K_n$ with $\lambda \ge 2$ have been known to date; in Section~\ref{hwk_n}, the layering technique is further developed specifically for this problem, resulting in numerous solutions. In Section~\ref{hwK_nmultip}, applying detachment to these solutions, we generate related solutions to the Hamilton-Waterloo Problem for complete equipartite multigraphs. The layering technique is at its most powerful when used on bipartite 2-factorizations. In Section~\ref{sec:bip-Kn}, building upon the results from \cite{MR2833961}, we obtain the following results for bipartite 2-factors in complete multigraphs:
a complete solution to the Oberwolfach Problem, as well as an almost complete solution to the Hamilton-Waterloo Problem. Again, detachment is applied in the following section to obtain related results for complete equipartite multigraphs.
Finally, new results on 2-factorizations of specified types for complete multigraphs of small even order, as well as complete equipartite multigraphs with a small, even number of parts, are presented in Section~\ref{sec:new-gen}.
We conclude the paper with a brief mention of cycle frames.

\section{Terminology} \label{termsec}

The {\em multiset} $M$ consisting of $\mult_M(x)$ copies of each  element $x$ of a given set $S$ will be denoted by $[ x^{\la \mult_M(x) \ra}: x \in S ]$. The set $\supp(M)=\{ x \in M: \mult_M(x) \ge 1\}$ is called the {\em support} of $M$.
The {\em union} of multisets $M_1$ and $M_2$ is the multiset $M_1 \sqcup M_2=[ x^{\la \mult_{M_1}(x) + \mult_{M_2}(x) \ra}: x \in \supp(M_1) \cup \supp(M_2) ]$.

A {\em refinement} of a multiset $M=[ x_i: i=1,\ldots,s]$ of positive integers is any multiset $M'=[ y_j: j=1,\ldots,t]$ of positive integers such that  there exists a partition $\{ P_{i}: i=1,\ldots,s\}$ of $\{ 1,\ldots,t \}$ satisfying  $\sum_{j \in P_i} y_j=x_i$ for all $i \in \{1,\ldots,s\}$. For example, $[1^{\la 2 \ra}, 3^{\la 2 \ra}, 4,7,8]$ is a refinement of $[1^{\la 2 \ra}, 3, 7^{\la 2 \ra},8]$. For  an integer $n$ and a multiset $M$  we define $nM=[ nx: x \in M]$.

If $\cG$ is a graph, then  $\lambda \cG$ we denote the {\em $\lambda$-fold} graph $\cG$, obtained  by replacing each edge of $\cG$ with $\lambda$ parallel edges.

An {\em $r$-factor} of  $\cal G$ is a spanning $r$-regular subgraph of $\cal G$, and an {\em almost $r$-factor} (sometimes called a {\em near $r$-factor}) of  $\cG$ is an $r$-factor of $\cG - v$ for some vertex $v$ in $\cG$. An (almost) {\em $r$-factorization} of $\cal G$ is a decomposition  of $\cal G$ into (almost) $r$-factors. Additionally, when $\cG$ is regular of odd degree, we define a {\em 2-factorization} of  $\cal G$ as a decomposition of $\cal G$ into 2-factors and a single 1-factor.  A {\em holey $r$-factor} of $\lambda K_{n \times m}$ is an $r$-factor of $\lambda K_{n \times m}-W$ for some part $W$ of $\lambda K_{n \times m}$, and a {\em holey $r$-factorization}  of  $\lambda K_{n \times m}$ is a decomposition of $\lambda K_{n \times m}$ into holey $r$-factors.

A 2-regular graph is said to be of {\em type} $[c_1,\ldots,c_t]$ if it is a disjoint union of $t$ cycles of lengths $c_1,\ldots,c_t$, respectively, and a 2-factorization $\{ F_1,\ldots,F_k \}$ is of {\em type} $[T_1,\ldots,T_k]$ if the 2-factor $F_i$ is of type $T_i$, for all $i\in \{ 1,\ldots,k\}$. We abbreviate the  {\em uniform} types $[c,\ldots,c]$ and $[T,\ldots,T]$ to $[c^\ast]$ and $[T^\ast]$, respectively. A 2-factor type $T=[c_1,\ldots,c_t]$ is called {\em bipartite} if $c_1,\ldots,c_t$ are all even, and is {\em admissible} for $\cG$ if $\cG $ contains a 2-factor of type $T$. In particular, $T$ is admissible for $\lambda K_n$ if and only if $\sum_i c_i=n$, $c_i \ge 2$ for all $i$, and $c_i \ge 3$ for all $i$ if $\lambda=1$.  Similarly, a 2-factorization type $\mathcal T=[T_1,\ldots,T_k]$ is called {\em bipartite} if $T_1,\ldots,T_k$ are all bipartite 2-factor types, and is {\em admissible}  for an $r$-regular graph ${\cal G}$  if  $k=\lfloor \frac{r}{2} \rfloor$ and each $T_i$ is an admissible  2-factor type for ${\cal G}$.  The type of an  almost 2-factorization and holey 2-factorization  can be defined similarly.

A cycle decomposition ${\cal D}=\{ C_1,\ldots,C_k \}$ of $\cal G$ is called {\em $[\alpha_1,\ldots,\alpha_{\ell}]$-resolvable}  if $\cal D$ partitions into subsets ${\cal D}_1,\ldots,{\cal D}_{{\ell}}$ (called {\em parallel classes} of $\cal D$) such that each subgraph $\bigcup_{ C \in {\cal D}_j} C$ is a $2\alpha_j$-factor of $\cal G$; that is, every vertex of $\cal G$ lies in exactly $\alpha_j$ cycles in ${\cal D}_j$, for all $j \in \{1,\ldots,{\ell}\}$. Note that, necessarily, $\sum_{C \in {\cal D}_j} |E(C)|=\alpha_jn$ for all $j \in \{1,\ldots,{\ell}\}$, and $\cal G$ is regular of degree $\sum_{j=1}^{\ell} 2\alpha_j$ or $\sum_{j=1}^{\ell} 2\alpha_j +1$. An $[\alpha,\ldots,\alpha]$-resolvable cycle decomposition is called simply {\em $\alpha$-resolvable}, and a 1-resolvable cycle decomposition is also called {\em resolvable}.  Observe that a resolvable cycle decomposition is equivalent to a 2-factorization, and its parallel classes correspond to the 2-factors.

\section{Detachment} \label{detachsec}

The following result of \cite{MR3487143} will be needed in the development of our first technique, detachment. Here, $\mult_{\cG}(u,v)$ denotes the multiplicity of an edge $uv$ in a graph $\cG$, and for a given edge colouring of $\cG$, the symbol $\cG(i)$ will denote the spanning subgraph of $\cG$ containing all edges of colour $i$.

\begin{theo}\label{thm:Bah}\textup{ \cite[Theorem 1.4]{MR3487143}}
Assume there exists a $(c_1,\dots,c_k)$-cycle decomposition of $\lambda m K_n$. Then, for all $\ell=n,n+1, \ldots,mn$ there exist a graph $\cG_{\ell}=(V,E)$ of order $\ell$ and a function $g_{\ell}:V \rightarrow {\mathbb Z}^+$ with the following properties:
\begin{description}
\item[(P1)] $\cG_{\ell}$ is $n$-partite;
\item[(P2)] $\sum_{v\in W}g_{\ell}(v)=m$ for each part $W$ of $\mathcal G_{\ell}$;
\item[(P3)] $\mult_{\cG_{\ell}}(u,v)=\lambda g_{\ell}(u)g(v)$ for each pair of vertices $u,v$ from distinct parts of $\cG_{\ell}$;
\item[(P4)] $\cG_{\ell}$ admits a $k$-edge-colouring (if $\lambda m(n-1)$ is even) or a $(k+1)$-edge-colouring (if $\lambda m(n-1)$ is odd) such that for each each colour $i \in \{ 1, \ldots, k \}$:
\begin{description}
\item[(P4a)] colour class $\cG_{\ell}(i)$ has $c_im$ edges;
\item[(P4b)] $\deg_{{\cG_{\ell}}(i)}(v)\in\{0,2g_{\ell}(v)\}$ for each $v\in V$; and
\item[(P4c)] $\cG_{\ell}(i)$ has a unique non-trivial connected component.

\hspace{-10mm} In addition, if $\lambda m(n-1)$ is odd,
\item[(P4d)] colour class $\cG_{\ell}(k+1)$ has $\frac{1}{2}mn$ edges; and
\item[(P4e)] $\deg_{{\cG_{\ell}}(k+1)}(v)=g_{\ell}(v)$ for each $v\in V$.
\end{description}
\end{description}
\end{theo}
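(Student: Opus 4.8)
The plan is to recast the given cycle decomposition as an edge-coloured multigraph on $n$ vertices and then obtain each $\cG_\ell$ from it by repeatedly invoking a Nash-Williams--type detachment theorem for edge-coloured graphs --- in its weighted, connectivity-preserving form --- to split off one new vertex at a time until order $\ell$ is reached.

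For the base case $\ell=n$: let $\{C_1,\dots,C_k\}$ be the given $(c_1,\dots,c_k)$-cycle decomposition of $\lambda m K_n$, together with a $1$-factor $C_{k+1}$ if $\lambda m(n-1)$ is odd. I would let $\cG_n$ be the multigraph obtained from $\lambda m K_n$ by replacing every edge with $m$ parallel copies (so $\cG_n\cong\lambda m^2 K_n$), regard each vertex as a part on its own, set $g_n\equiv m$, and colour the $m$ copies of an edge $e$ with the index of the cycle or $1$-factor of the decomposition containing $e$. Then (P1)--(P3) are immediate, and for each $i\le k$ the colour class $\cG_n(i)$ is the cycle $C_i$ with every edge of multiplicity $m$: it has $c_im$ edges, all its degrees lie in $\{0,2m\}$, and it is connected, so (P4a)--(P4c) hold; likewise $\cG_n(k+1)$, being $C_{k+1}$ thickened $m$-fold, has $\frac12 mn$ edges and all degrees equal to $m$, giving (P4d)--(P4e).

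For $\ell>n$ I would build a chain $\cG_n,\cG_{n+1},\dots,\cG_{mn}$, where $\cG_{\ell+1}$ is obtained from $\cG_\ell$ by \emph{detaching} a single vertex: pick a vertex $u$ with $s:=g_\ell(u)\ge 2$ (one exists exactly while $\ell<mn$, since $\sum_v g_\ell(v)=mn$ throughout), introduce a new vertex $u'$ in the same part as $u$ --- so no intra-part edge is created --- put $g_{\ell+1}(u)=s-1$, $g_{\ell+1}(u')=1$, and apply the detachment theorem to distribute the edges at $u$ between $u$ and $u'$. The version I have in mind produces a detachment in which: for each $v$, the multiplicity $\mult_{\cG_\ell}(u,v)$ is split between $u$ and $u'$ in the ratio $(s-1):1$, which, since $\mult_{\cG_\ell}(u,v)=\lambda s\,g_\ell(v)$ for $v$ in another part, yields $\mult_{\cG_{\ell+1}}(x,y)=\lambda\,g_{\ell+1}(x)\,g_{\ell+1}(y)$ for $x,y$ in distinct parts, i.e.\ (P3); every colour class keeps its edge count, i.e.\ (P4a) and (P4d); in each colour class the degree at $u$ is shared as evenly as possible between $u$ and $u'$, and since that degree is $2s$ or $0$ (resp.\ $s$), split over weights $s-1$ and $1$, no rounding is needed and one gets $\deg_{\cG_{\ell+1}(i)}(x)\in\{0,2g_{\ell+1}(x)\}$ and $\deg_{\cG_{\ell+1}(k+1)}(x)=g_{\ell+1}(x)$, i.e.\ (P4b) and (P4e); and the number of connected components of each colour class that contain an edge is unchanged, so the unique nontrivial component of $\cG_\ell(i)$ persists into $\cG_{\ell+1}(i)$, i.e.\ (P4c). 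Each $\cG_\ell$ in this chain then has all the required properties, for every $\ell\in\{n,\dots,mn\}$.

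The main obstacle will be the connectivity clause (P4c): controlling multiplicities and colour-degrees by even splitting is routine, but keeping each colour class with exactly one component that contains an edge requires the Nash-Williams--style, connectivity-preserving refinement of the detachment theorem, and one must check that its hypotheses (each colour class being suitably equitable with respect to the amalgamation function) are inherited along the chain --- which they are, precisely because (P4b) forces every nonzero degree of $\cG_\ell(i)$ to equal $2g_\ell(v)$. A lesser point is parity bookkeeping: since $\lambda m^2(n-1)$ and $\lambda m(n-1)$ have the same parity, the $(k+1)$-st colour (which, note, is \emph{not} required to be connected) is present exactly when the statement says, and one checks the detachment never manufactures an intra-part edge, since the amalgamated vertices carry no loops.
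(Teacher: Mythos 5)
Your proposal is correct and follows essentially the same route as the paper: the statement is quoted from \cite[Theorem 1.4]{MR3487143}, and the construction sketched there (and recapitulated in the proof of Theorem~\ref{thm:main}) is exactly your argument --- take $\cG_n\cong\lambda m^2K_n$ with $g_n\equiv m$ and colour classes the $m$-fold cycles of the given decomposition, then pass from $\cG_\ell$ to $\cG_{\ell+1}$ by splitting a vertex $\alpha$ with $g_\ell(\alpha)>1$ off into $\alpha$ and a new vertex $\beta$ in the same part, using an amalgamation--detachment lemma that divides multiplicities and colour-degrees proportionally to the weights $g_{\ell+1}$ and preserves the number of nontrivial components of each colour class. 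Your observations that the proportional split incurs no rounding (because every nonzero colour-degree equals $2g_\ell(v)$, resp.\ $g_\ell(v)$) and that connectivity is the delicate clause are precisely the points the cited proof handles.
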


Now, we are ready to prove the following.

\begin{theo} \label{thm:main}
Let  $V(\lambda m K_n)=\ZZ_n$ and $V(\lambda K_{n \times m})=\cup_{i \in \ZZ_n} V_i$, where $V_0, V_1, \ldots, V_{n-1}$ are the parts of $\lambda K_{n \times m}$, each of size $m$. Suppose
${\cal D}=\{ C_1,\ldots,C_k \}$ is a cycle decomposition of $\lambda m K_n$.  Then there exists a cycle decomposition ${\cal D'}=\{ C_1',\ldots,C_k' \}$ of $\lambda K_{n \times m}$  such that $V(C_i')= \cup_{j \in V(C_i)} V_j$ for each $i \in \{ 1,\ldots, k\}$.
\end{theo}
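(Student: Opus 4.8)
The plan is to read Theorem~\ref{thm:main} off Theorem~\ref{thm:Bah}, applied with the detachment parameter $\ell$ pushed to its largest admissible value $\ell=mn$, and to keep track of which cycle of $\mathcal D$ each colour class descends from.

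First I would apply Theorem~\ref{thm:Bah} to $\mathcal D$ with $\ell=mn$, obtaining a graph $\cG:=\cG_{mn}$ of order $mn$ and a function $g:=g_{mn}:V(\cG)\to\ZZ^+$ satisfying (P1)--(P4). The pivotal observation is that for $\ell=mn$ the function $g$ is forced to be identically $1$: by (P1) the $mn$ vertices of $\cG$ partition into $n$ parts, by (P2) the $g$-values on each part sum to $m$, and since $g\ge 1$ this is possible only if every part has exactly $m$ vertices and $g\equiv 1$. Then (P3) gives $\mult_{\cG}(u,v)=\lambda$ for every pair $u,v$ in distinct parts, so $\cG$ is a copy of $\lambda K_{n\times m}$; declaring the part of $\cG$ that arose in the detachment from the vertex $j\in\ZZ_n=V(\lambda m K_n)$ to be $V_j$ fixes the vertex labelling $V(\cG)=\bigcup_{j\in\ZZ_n}V_j$ demanded by the statement.

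Next I would extract the cycles from the edge-colouring of (P4). With $g\equiv 1$, property (P4b) says each vertex of the colour class $\cG(i)$ has degree $0$ or $2$, so $\cG(i)$ is a vertex-disjoint union of cycles and isolated vertices; by (P4c) exactly one of its components is non-trivial, and by (P4a) that component is a cycle $C_i'$ with $c_i m$ edges. Since the colour classes partition $E(\cG)=E(\lambda K_{n\times m})$, the family $\mathcal D'=\{C_1',\dots,C_k'\}$ is a cycle decomposition of $\lambda K_{n\times m}$. (When $\lambda m(n-1)$ is odd, (P4d)--(P4e) make the extra class $\cG(k+1)$ a perfect matching, which is precisely the single $1$-factor that a cycle decomposition of the then odd-regular graph $\lambda K_{n\times m}$ is allowed, and required, to contain, and which descends from the corresponding $1$-factor of $\mathcal D$.)

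Finally, it remains to verify $V(C_i')=\bigcup_{j\in V(C_i)}V_j$, and this is the step where the actual content lies. The edge-colouring furnished by Theorem~\ref{thm:Bah} is built from $\mathcal D$ by a colour-preserving detachment, so every colour-$i$ edge of $\cG$ joins two vertices that were detached from endpoints of the cycle $C_i$; hence $V(C_i')=V(\cG(i))\subseteq\bigcup_{j\in V(C_i)}V_j$. On the other hand $|V(C_i')|=c_i m=m\,|V(C_i)|=\big|\bigcup_{j\in V(C_i)}V_j\big|$, so the inclusion is forced to be an equality, which is the claim. The one genuinely non-routine point is therefore to extract from the construction in \cite{MR3487143} that the detachment can be, and is, taken to respect the individual cycles of $\mathcal D$ --- i.e.\ that colour $i$ in each intermediate graph $\cG_\ell$, and in particular in $\cG_{mn}$, faithfully tracks $C_i$ --- and I expect this to be the main obstacle; everything else is routine bookkeeping with (P1)--(P4).
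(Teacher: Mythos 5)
Your proposal is correct and follows essentially the same route as the paper: both proofs must open up the construction behind Theorem~\ref{thm:Bah} rather than use it as a black box, seed the initial edge-colouring of $\cG_n$ with the $m$-fold blow-ups of the cycles $C_1,\dots,C_k$, and then ride the part-preserving, colour-inheriting detachment down to $\cG_{mn}\cong\lambda K_{n\times m}$. You correctly isolate the one non-routine point --- that colour $i$ faithfully tracks $C_i$ through every intermediate $\cG_\ell$ --- and this is exactly what the paper supplies, via the observations that each new vertex $\beta$ stays in the part of its parent $\alpha$ and that the colouring of $\cG_{\ell+1}$ is inherited from $\cG_\ell$, packaged as the strengthened properties (P4b$'$) and (P4b$''$). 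Where you diverge is only in how the final equality $V(C_i')=\bigcup_{j\in V(C_i)}V_j$ is closed: the paper argues via the degree bookkeeping $\deg_{\cG_{mn}(i)}(u)=0 \Leftrightarrow \deg_{\cG_n(i)}(j)=0$ to conclude that each part $V_j$ lies entirely inside or entirely outside $V(C_i')$, whereas you prove one containment from colour- and part-preservation and then force equality by the count $|V(C_i')|=c_im=\big|\bigcup_{j\in V(C_i)}V_j\big|$. Your counting finish is a clean alternative that needs slightly less from the detachment (only that colour-$i$ edges keep both endpoints in parts indexed by $V(C_i)$, not the full degree dichotomy), though both variants ultimately rest on the same careful examination of the construction in \cite{MR3487143}.
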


\begin{proof}
As in the proof of Theorem~\ref{thm:Bah} \cite[Theorem 1.4]{MR3487143}, we first construct a graph ${\cal G}_n=(V,E)$ of order $n$ and a function $g_n:V \rightarrow {\mathbb Z}^+$ satisfying Properties (P1)--(P3) as follows: ${\cal G}_n \cong \lambda m^2K_n$ and $g_n(v)=m$ for all $v\in V$. Next, we take the $(c_1,\dots,c_k)$-cycle decomposition ${\cal D}$ of $\lambda m K_n$.
Replacing each edge in this decomposition by $m$ parallel edges we obtain a decomposition of ${\cal G}_n$ into $m$-fold cycles of lengths $c_1,\ldots, c_k$, plus an $m$-fold 1-factor if $\lambda m (n-1)$ is odd.  For $i=1,\ldots,k$, let
the $i$-th colour class ${\cal G}_n(i)$ be the $m$-fold cycle of length $c_i$ in this decomposition, together with the remaining $n-c_i$ isolated vertices. If $\lambda m (n-1)$ is odd, let ${\cal G}_n(k+1)$ be the $m$-fold 1-factor.
Property (P4) of Theorem~\ref{thm:Bah} then holds for ${\cal G}_n$ and $g_n$  with this edge colouring as well.

In the proof of Theorem~\ref{thm:Bah} \cite[Theorem 1.4]{MR3487143}, for $\ell=n,\ldots,mn-1$, a graph ${\cal G}_{\ell+1}$ is obtained from ${\cal G}_{\ell}$ by choosing a vertex $\alpha$ with $g_{\ell}(\alpha)>1$, adjoining a new vertex $\beta$, and replacing  certain edges $\alpha u$ (for some $u \in V$) with edges $\beta u$. Note that $\beta$ is placed in the same part of ${\cal G}_{\ell+1}$ as $\alpha$, so that all vertices in a part are offspring of the same vertex in ${\cal G}_{n}$.
In addition, the $k$-edge-colouring or $(k+1)$-edge-colouring
of ${\cal G}_{\ell+1}$ is inherited from the edge-colouring of $\cG_{\ell}$, and a suitable function $g_{\ell+1}$ is defined so that Properties (P1)--(P4) hold for ${\cal G}_{\ell+1}$
and $g_{\ell+1}$. A careful examination of the proof shows that Property (P4b) can be strengthened as follows:
\begin{description}
\item[(P4b)] $\deg_{{\cal G}_{\ell}(i)}(v)\in\{0,2g_{\ell}(v)\}$ for all $v\in V({\cal G}_{\ell})$;
\item[(P4b')] $\deg_{{\cal G}_{\ell}(i)}(\beta)=0$ if and only if $\deg_{{\cal G}_{\ell}(i)}(\alpha)=0$; and
\item[(P4b'')] if $\ell > n$, then for all $v \in V({\cal G}_{\ell})$ we have that  $\deg_{{\cal G}_{\ell}(i)}(v)=0$ if and only if $\deg_{{\cal G}_{\ell-1}(i)}(v)=0$.
\end{description}

For $\ell=mn$, Theorem~\ref{thm:Bah} then gives existence of an $n$-partite graph ${\cal G}_{mn}=(V,E)$ of order $mn$ and a function $g_{mn}:V \rightarrow {\mathbb Z}^+$ that satisfy Properties (P1)--(P4), including (P4b') and (P4b''). As in \cite[Theorem 1.4]{MR3487143}, it can be shown that ${\cal G}_{mn} \cong \lambda K_{n \times m}$ and $g_{mn}(v)=1$ for all $v \in V$. In case $\lambda m (n-1)$ is odd, colour class ${\cal G}_{mn}(k+1)$ is clearly a 1-factor of $\lambda K_{n \times m}$, while in both cases, the unique non-trivial connected component of each colour class ${\cal G}_{mn}(i)$ for $i \in \{ 1,\ldots,k \}$ is a 2-regular graph with $c_im$ edges, and hence a $c_im$-cycle; call it $C_i'$. Hence $\{ C_1',\ldots,C_k' \}$ is a $(c_1m,\dots,c_km)$-cycle decomposition of $\lambda K_{n \times m}$. It remains to show that for each $i \in \{ 1,\ldots, k\}$, we have $V(C_i')= \cup_{j \in V(C_i)} V_j$.

Take any $j \in \ZZ_n$ and any $u \in V_j$. By Property (P4b), $\deg_{{\cal G}_{mn}(i)}(u)\in\{0,2\}$, and by (P4b') and (P4b''), $\deg_{{\cal G}_{mn}(i)}(u)=0$ if and only if $\deg_{{\cal G}_{n}(i)}(j)=0$. Hence for all $i \in \{ 1,\ldots, k\}$, either $V_j \subseteq V(C_i')$ or $V_j \cap V(C_i') =\emptyset$, and the result follows.
\end{proof}

The following corollary will be among the most referred-to results of our paper. It describes how detachment applied to specific types of cycle decompositions of $\lambda m K_n$ results in specific types of cycle decompositions of $\lambda K_{n \times m}$.

\begin{cor}\label{cor:direct}
Let  ${\cal D}=\{ C_1,\ldots,C_k \}$ be a cycle decomposition of $\lambda m K_n$, and let ${\cal D'}=\{ C_1',\ldots,C_k' \}$ be a cycle decomposition of $\lambda K_{n \times m}$ whose existence is shown in Theorem~\ref{thm:main}. Then the following hold.
\begin{enumerate}[(i)]
\item If ${\cal D}$ is a $(c_1,\ldots,c_k)$-cycle decomposition, then ${\cal D'}$ is a $(c_1m,\ldots,c_km)$-cycle decomposition.
\item If ${\cal D}$ is $[\alpha_1,\ldots,\alpha_{\ell}]$-resolvable, then ${\cal D'}$ is $[\alpha_1,\ldots,\alpha_{\ell}]$-resolvable.
\item If ${\cal D}$ is equivalent to a 2-factorization of type $[T_1,\ldots,T_{\ell}]$, then  ${\cal D'}$ is equivalent to a 2-factorization of type $[mT_1,\ldots,mT_{\ell}]$.
\item If ${\cal D}$ is equivalent to an almost 2-factorization of type $[T_1,\ldots,T_{\ell}]$, then  ${\cal D'}$ is equivalent to a holey 2-factorization of type $[mT_1,\ldots,mT_{\ell}]$.
\end{enumerate}
\end{cor}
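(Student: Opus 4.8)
The plan is to derive each part directly from Theorem~\ref{thm:main}, whose conclusion already gives us the essential structural link: for every cycle $C_i \in \mathcal D$, the corresponding cycle $C_i' \in \mathcal D'$ satisfies $V(C_i') = \bigcup_{j \in V(C_i)} V_j$. Since each $|V_j| = m$ and the parts are pairwise disjoint, this immediately yields $|V(C_i')| = m\,|V(C_i)|$, i.e.\ the length of $C_i'$ is $m$ times the length of $C_i$. Part (i) is then just the definition of a $(c_1,\ldots,c_k)$-cycle decomposition: if $C_i$ has length $c_i$, then $C_i'$ has length $c_i m$.

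For parts (ii) and (iii), the idea is to transport the parallel-class / 2-factor structure through the vertex correspondence $v \mapsto V_v$. First I would note the key observation: if $\mathcal D$ partitions into subsets $\mathcal D_1,\ldots,\mathcal D_\ell$, and we set $\mathcal D_j' = \{ C_i' : C_i \in \mathcal D_j \}$, then for any vertex $u \in V_v$ and any $j$, the number of cycles of $\mathcal D_j'$ through $u$ equals the number of cycles of $\mathcal D_j$ through $v$ — this is exactly because $V_v \subseteq V(C_i')$ iff $v \in V(C_i)$. Hence if $\mathcal D_j$ is a $2\alpha_j$-factor of $\lambda m K_n$ (every vertex in $\alpha_j$ cycles), then $\mathcal D_j'$ is a $2\alpha_j$-factor of $\lambda K_{n\times m}$, proving (ii). For (iii), a resolvable decomposition is a $1$-resolvable one, so the parallel classes $\mathcal D_j'$ are genuine $2$-factors; moreover the multiset of cycle lengths in $\mathcal D_j'$ is, by the length-scaling from (i), exactly $m$ times the multiset of cycle lengths in $\mathcal D_j$, so if $\mathcal D_j$ realizes $2$-factor type $T_j = [c_1,\ldots,c_t]$ then $\mathcal D_j'$ realizes type $mT_j = [mc_1,\ldots,mc_t]$.

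For part (iv), an almost $2$-factor of $\lambda m K_n$ is a $2$-factor of $(\lambda m K_n) - w$ for some vertex $w \in \ZZ_n$; the corresponding parallel class $\mathcal D_j'$ consists of cycles that together cover every vertex outside $V_w$ exactly once and miss every vertex of $V_w$ (again by the iff above), i.e.\ a $2$-factor of $\lambda K_{n\times m} - V_w$, which is precisely a holey $2$-factor with hole the part $V_w$. Since $V_w$ is a part of $\lambda K_{n\times m}$, collecting these over all $j$ gives a holey $2$-factorization, and the type scaling is as in (iii). I do not expect any serious obstacle here: the entire corollary is bookkeeping built on the single strong conclusion of Theorem~\ref{thm:main}. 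The only point requiring a little care — and the one I would state explicitly — is the ``iff'' translating incidence in $\mathcal D$ to incidence in $\mathcal D'$ (namely that $u \in V(C_i')$ for \emph{some/every} $u \in V_v$ precisely when $v \in V(C_i)$), since every counting claim in (ii)--(iv) reduces to it.
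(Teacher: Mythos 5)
Your proposal is correct and follows essentially the same route as the paper: all four parts are reduced to the single conclusion of Theorem~\ref{thm:main} that $V(C_i')=\bigcup_{j\in V(C_i)}V_j$, with the key ``iff'' ($u\in V(C_i')$ for $u\in V_j$ precisely when $j\in V(C_i)$) driving the degree and incidence counts in (ii)--(iv), exactly as in the paper's proof. No gaps.
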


\begin{proof} As in Theorem~\ref{thm:main}, let  $V(\lambda m K_n)=\ZZ_n$ and $V(\lambda K_{n \times m})=\cup_{i \in \ZZ_n} V_i$, where $V_0, V_1, \ldots,$ $ V_{n-1}$ are the parts of $\lambda K_{n \times m}$, each of size $m$. Theorem~\ref{thm:main} then shows that $V(C_i')= \cup_{j \in V(C_i)} V_j$ for each $i \in \{ 1,\ldots, k\}$.
\begin{enumerate}[(i)]
\item It follows immediately that $C_i'$ is a $c_im$-cycle if $C_i$ is a $c_i$-cycle.
\item Assume ${\cal D}$ is $[\alpha_1,\ldots,\alpha_{\ell}]$-resolvable. Denote $K=\{1,\ldots,k \}$, and let ${\cal P}=\{ P_1,\ldots,P_{\ell}\}$ be a partition of $K$ such that for all $s \in \{1,\ldots,{\ell}\}$, we have that $\bigcup_{i \in P_s} C_i$ is a $2\alpha_s$-factor of $\lambda m K_n$. Fix $s \in \{1,\ldots,{\ell}\}$ and let ${\cal H}_s = \bigcup_{i \in P_s} C_i'$. It suffices to show that ${\cal H}_s$ is a $2\alpha_s$-factor of $\lambda K_{n \times m}$. Take any $j \in \ZZ_n$  and  any $u \in V_j$. By the assumption, vertex $j$ has degree $2\alpha_s$ in $\bigcup_{i \in P_s} C_i$, that is, it lies in $\alpha_s$ cycles in $\{ C_i:  i \in P_s\}$. By Theorem~\ref{thm:main}, for any $i$, we have that $j \in V(C_i)$ if and only if $u \in V(C_i')$. Hence $u$ lies in $\alpha_s$ cycles in $\{ C_i':  i \in P_s\}$, and ${\cal H}_s$ is a $2\alpha_s$-factor of $\lambda K_{n \times m}$.
    It follows that  ${\cal D'}$ is $[\alpha_1,\ldots,\alpha_{\ell}]$-resolvable.
\item Let $\{ F_1,\ldots,F_{\ell} \}$ be a 2-factorization of type $[T_1,\ldots,T_{\ell}]$ corresponding to ${\cal D}$. Take any $s \in \{ 1,\ldots, {\ell}\}$, and let $P_s \subset K$ be such that $F_s= \cup_{i \in P_s} C_i$. Let $F_s' = \bigcup_{i \in P_s} C_i'$. By the proof of (ii), $F_s'$ is a 2-factor of $\lambda K_{n \times m}$, and by the proof of (i), for each $i \in P_s$, $C_i'$ is a $c_im$-cycle if $C_i$ is a $c_i$-cycle.
    Hence, if $F_s$ is of type $T_s=[c_{s,1},\ldots,c_{s, t_s}]$, then $F_s'$ is of type $mT_s=[c_{s,1}m,\ldots,c_{s, t_s}m]$.
\item Let $\{ F_1,\ldots,F_{\ell} \}$ be an almost 2-factorization corresponding to ${\cal D}$. Take any $s \in \{ 1,\ldots,\ell \}$, and let $P_s \subset K$ be such that $F_s=\cup_{i \in P_s} C_i$. Moreover, let $j \in \ZZ_n$ be such that $F_s$ is a 2-factor of $\lambda m K_n - j$. By Theorem~\ref{thm:main}, $F_s'=\cup_{i \in P_s} C_i'$ is a spanning subgraph of $\lambda K_{n \times m} - V_j$, and as in (iii), it can be shown that if $F_s$ is of type $T_s$, then $F_s'$ is of type $mT_s$.
\end{enumerate}
\end{proof}

\section{$\alpha$-resolvable Cycle Decompositions} \label{resolmultip}

Before we move on to our second main technique, layering, we shall present an immediate application of detachment to obtain new $\alpha$-resolvable cycle decompositions.

Recall that a cycle decomposition of $\cG$ is  {\em $\alpha$-resolvable}  if the set of cycles can be partitioned into parallel classes so that every vertex of $\cal G$ lies in exactly $\alpha$ cycles of each parallel class.
Very little is known about existence of $\alpha$-resolvable cycle decompositions (of any graph!) for $\alpha \ge 2$. Corollory~\ref{cor:direct}(ii) yields the following general result.

\begin{theo}\label{thm:alpha}
If $\lambda m K_n$ admits an $\alpha$-resolvable $(c_1,\ldots,c_k)$-cycle decomposition, then $\lambda K_{n \times m}$ admits an $\alpha$-resolvable $(c_1m,\ldots,c_km)$-cycle decomposition.
\end{theo}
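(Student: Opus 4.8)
The plan is to derive Theorem~\ref{thm:alpha} as an essentially immediate consequence of the machinery already in place, namely Theorem~\ref{thm:main} together with Corollary~\ref{cor:direct}(i)--(ii). Suppose $\lambda m K_n$ admits an $\alpha$-resolvable $(c_1,\ldots,c_k)$-cycle decomposition $\D = \{C_1,\ldots,C_k\}$. By definition, being $\alpha$-resolvable means being $[\alpha,\ldots,\alpha]$-resolvable (with $\ell$ entries, where $\ell$ is the number of parallel classes), so $\D$ is in particular an $[\alpha,\ldots,\alpha]$-resolvable cycle decomposition of $\lambda m K_n$.

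First I would invoke Theorem~\ref{thm:main} to produce, with $V(\lambda m K_n) = \ZZ_n$ and $V(\lambda K_{n\times m}) = \cup_{i\in\ZZ_n} V_i$ as set up there, a cycle decomposition $\D' = \{C_1',\ldots,C_k'\}$ of $\lambda K_{n\times m}$ with $V(C_i') = \cup_{j\in V(C_i)} V_j$ for each $i$. Then I would apply Corollary~\ref{cor:direct}(i) to conclude that $\D'$ is a $(c_1 m,\ldots,c_k m)$-cycle decomposition, since each $C_i$ is a $c_i$-cycle and hence each $C_i'$ is a $c_i m$-cycle. Finally, I would apply Corollary~\ref{cor:direct}(ii) with $[\alpha_1,\ldots,\alpha_\ell] = [\alpha,\ldots,\alpha]$ to conclude that $\D'$ is $[\alpha,\ldots,\alpha]$-resolvable, i.e.\ $\alpha$-resolvable. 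Combining these two facts, $\D'$ is an $\alpha$-resolvable $(c_1 m,\ldots,c_k m)$-cycle decomposition of $\lambda K_{n\times m}$, which is exactly the claim.

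Since both the cycle-length statement and the $\alpha$-resolvability statement are handed to us directly by Corollary~\ref{cor:direct}, there is essentially no obstacle here: the theorem is a packaging of parts (i) and (ii) of that corollary applied to the uniform resolution vector. The only thing one must be slightly careful about is matching the definitions --- that "$\alpha$-resolvable" is the special case "$[\alpha,\ldots,\alpha]$-resolvable" of the more general notion, so that Corollary~\ref{cor:direct}(ii) genuinely applies --- but this is immediate from the terminology in Section~\ref{termsec}. No new construction or estimate is required; the substantive content (the detachment argument itself) is already contained in the proof of Theorem~\ref{thm:main}.
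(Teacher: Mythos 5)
Your proposal is correct and matches the paper exactly: the paper derives Theorem~\ref{thm:alpha} as an immediate consequence of Corollary~\ref{cor:direct} (specifically part (ii), with part (i) handling the cycle lengths), which is precisely the packaging you describe. No further comment is needed.
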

To exhibit the usefulness of Theorem \ref{thm:alpha}, we present two applications.

\begin{cor} \label{etares1}
Let $\alpha, \lambda, m, n$ be such that $3 \vert \alpha n$, $2\alpha \vert  \lambda m(n-1)$, and $4 \vert \lambda m$ if $(n,\alpha)=(6,1)$. Then  $\lambda K_{n\times m}$ admits an $\alpha$-resolvable $3m$-cycle decomposition.
\end{cor}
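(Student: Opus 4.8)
The plan is to apply Theorem~\ref{thm:alpha} with a suitable reduction from $\lambda K_{n\times m}$ to $\lambda m K_n$, which shifts the entire burden onto finding an $\alpha$-resolvable $3$-cycle decomposition of $\lambda m K_n$. By Theorem~\ref{thm:alpha}, it suffices to show that $\lambda m K_n$ admits an $\alpha$-resolvable $(3,\ldots,3)$-cycle decomposition under the stated hypotheses; detachment then automatically produces the desired $\alpha$-resolvable $3m$-cycle decomposition of $\lambda K_{n\times m}$. So the real content is a statement purely about complete multigraphs, and I would first isolate it as an auxiliary lemma: $\mu K_n$ (with $\mu = \lambda m$) has an $\alpha$-resolvable triangle decomposition precisely when $3 \mid \alpha n$, $2\alpha \mid \mu(n-1)$, and the small exception $(n,\alpha)=(6,1)$ requires $4 \mid \mu$.

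The necessary conditions are easy to verify: an $\alpha$-resolvable triangle decomposition of $\mu K_n$ needs each parallel class to carry $\alpha n/3$ triangles (so $3 \mid \alpha n$), and since $\mu K_n$ is $\mu(n-1)$-regular and decomposes into $2\alpha$-factors, we need $2\alpha \mid \mu(n-1)$. The exceptional case $(n,\alpha)=(6,1)$ is the classical Oberwolfach/Kirkman obstruction: $\mu K_6$ has no resolvable triangle decomposition (a Kirkman triple system of order $6$ does not exist) for $\mu=1$, and in fact the first feasible multiplicity is $\mu=2$ for ordinary resolvability, but an $[3^\ast]$-type $2$-factorization of $2K_6$ still fails — one must go up to $4 \mid \mu$. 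I would cite the known literature on resolvable (and $\alpha$-resolvable) triangle decompositions of $\mu K_n$: for $\alpha = 1$ this is exactly Hanani's theorem on Kirkman triple systems together with the multigraph extensions (resolvable triple systems $RB_\mu(n)$ / nearly Kirkman triple systems), and for general $\alpha$ one invokes the known results on $\alpha$-resolvable triple systems, e.g. the work of Jungnickel, Mullin, Vanstone and later authors, which establish sufficiency of the divisibility conditions with exactly the $(6,1)$ exception.

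The key steps, in order, are: (1) restate the corollary's hypotheses with $\mu = \lambda m$, checking that $3 \mid \alpha n$, $2\alpha \mid \mu(n-1)$, and ``$4 \mid \mu$ if $(n,\alpha)=(6,1)$'' are exactly the hypotheses of the multigraph $\alpha$-resolvable triple system existence theorem; (2) invoke that theorem to obtain an $\alpha$-resolvable $(3,\ldots,3)$-cycle decomposition of $\lambda m K_n$; (3) apply Theorem~\ref{thm:alpha} (equivalently Corollary~\ref{cor:direct}(ii)) to detach this into an $\alpha$-resolvable $(3m,\ldots,3m)$-cycle decomposition of $\lambda K_{n\times m}$, which is precisely an $\alpha$-resolvable $3m$-cycle decomposition.

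The main obstacle is matching the hypotheses to an off-the-shelf existence result and handling the exceptional small case cleanly. The divisibility conditions $3 \mid \alpha n$ and $2\alpha \mid \lambda m(n-1)$ are the obvious counting obstructions; the subtlety is whether these are genuinely sufficient for $\alpha$-resolvable triangle decompositions of the \emph{multigraph} $\lambda m K_n$ for all $\alpha \ge 1$, including the boundary behaviour around $n = 6$ and around $n \equiv 0, 2 \pmod 3$ where the triple system needs a near-resolvable or frame-based construction. If the literature does not cover every residue class of $\lambda m K_n$ uniformly, I would fall back on a direct construction: start from a known $\alpha$-resolvable triangle decomposition of $K_n$ or $2K_n$ (Kirkman systems, near-Kirkman systems, or Hanani's $\alpha$-resolvable designs) and ``layer'' it up to $\lambda m K_n$ by taking $\lambda m$ (or $\lambda m / 2$) disjoint copies of each parallel class — this is exactly the kind of multiplicity-scaling the paper's layering technique is designed for, and the divisibility conditions guarantee the arithmetic works out. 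The $(6,1)$ exception is then exactly the place where a single copy fails and the $4 \mid \lambda m$ condition forces enough copies to route around it.
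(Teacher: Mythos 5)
Your proposal is correct and follows essentially the same route as the paper: the paper likewise reduces to an $\alpha$-resolvable $3$-cycle decomposition of $\lambda m K_n$, cites the Jungnickel--Mullin--Vanstone result on the spectrum of $\alpha$-resolvable block designs with block size $3$ (which gives exactly the divisibility conditions and the $(6,1)$ exception with $4 \mid \lambda m$), and then applies Theorem~\ref{thm:alpha}. The only difference is that the paper treats the existence result as a single off-the-shelf citation rather than rederiving the necessary conditions or contemplating a layering fallback.
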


\begin{proof}
In \cite{MR1140809}, it is shown that $\lambda K_n$ with $\lambda (n-1)$ even admits an $\alpha$-resolvable 3-cycle decomposition if and only if $3 \vert \alpha n$, $2\alpha \vert  \lambda (n-1)$, and $4 \vert \lambda$ if $(n,\alpha)=(6,1)$. Hence, with our assumption, $\lambda m K_n$ admits an $\alpha$-resolvable 3-cycle decomposition, and the results follows from Theorem~\ref{thm:alpha}.
\end{proof}

\begin{cor} \label{etares2}
Let $\alpha, \lambda, m, n$ be such that $4 \vert \alpha n$ and $2\alpha \vert  \lambda m(n-1)$. Then  $\lambda K_{n\times m}$ admits an $\alpha$-resolvable $4m$-cycle decomposition.
\end{cor}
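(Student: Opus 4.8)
The plan is to follow the same two-step template used for Corollary~\ref{etares1}: first invoke a known characterization of $\alpha$-resolvable $4$-cycle decompositions of complete multigraphs, and then transfer it to complete equipartite multigraphs via detachment (Theorem~\ref{thm:alpha}).

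First I would record the building block. The relevant necessary conditions for $\mu K_n$ to admit an $\alpha$-resolvable $4$-cycle decomposition are immediate: such a decomposition is in particular a $2\alpha$-factorization, so $2\alpha \mid \mu(n-1)$ (and hence $\mu(n-1)$ is even); and each parallel class is a $2\alpha$-regular spanning subgraph partitioned into $4$-cycles, so it consists of $\alpha n/4$ cycles, forcing $4 \mid \alpha n$. The content I would cite is that, for $n \ge 4$, these two divisibility conditions are also sufficient, and --- in contrast to the triangle case quoted from \cite{MR1140809} --- no sporadic exception (such as $(n,\alpha)=(6,1)$) arises. I would isolate this as a lemma with the appropriate reference.

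Second I would apply this with $\mu = \lambda m$. The hypotheses of the corollary, $4 \mid \alpha n$ and $2\alpha \mid \lambda m(n-1)$, are precisely the hypotheses of the building block with $\mu = \lambda m$, so $\lambda m K_n$ admits an $\alpha$-resolvable $(4,\ldots,4)$-cycle decomposition. Feeding this into Theorem~\ref{thm:alpha} with $c_1 = \cdots = c_k = 4$ then produces an $\alpha$-resolvable $(4m,\ldots,4m)$-cycle decomposition of $\lambda K_{n \times m}$, that is, an $\alpha$-resolvable $4m$-cycle decomposition, as required. (As always for cycle decompositions, we read the statement with $n \ge 4$, since no $4m$-cycle fits in $\lambda K_{n\times m}$ when $n \le 3$.)

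The only non-routine point is the first step: pinning down, or assembling from the literature, the exception-free ``if and only if'' for $\alpha$-resolvable $4$-cycle decompositions of $\mu K_n$, and verifying that the clean divisibility criterion has no sporadic failures that the hypotheses $4\mid\alpha n$ and $2\alpha\mid\lambda m(n-1)$ fail to exclude. Once that is available, the remaining steps are a direct, one-line invocation of Theorem~\ref{thm:alpha}, exactly parallel to the proof of Corollary~\ref{etares1}.
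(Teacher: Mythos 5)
Your proposal matches the paper's proof exactly: the paper cites the result of Ma and Tian \cite{MR2590230} that the necessary divisibility conditions $4 \divides \alpha n$ and $2\alpha \divides \mu(n-1)$ are also sufficient for an $\alpha$-resolvable $4$-cycle decomposition of $\mu K_n$, applies it with $\mu = \lambda m$, and then invokes Theorem~\ref{thm:alpha}, precisely as you describe. The ``non-routine point'' you flag is indeed just the citation of \cite{MR2590230}, which confirms there are no sporadic exceptions in the $4$-cycle case.
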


\begin{proof}
By \cite{MR2590230}, for each $\alpha\geq 1$, the obvious necessary conditions for the existence of an $\alpha$-resolvable 4-cycle decomposition of $\lambda K_n$ with $\lambda (n-1)$ even --- namely, $4 \vert \alpha n$ and  $2\alpha \vert  \lambda (n-1)$ --- are also sufficient. Our assumption then yields existence of an $\alpha$-resolvable 4-cycle decomposition of $\lambda m K_n$,
and applying Theorem~\ref{thm:alpha} completes the proof.
\end{proof}

\section{Layering} \label{layerSec}

Observe that if ${\cal G}$ is a regular graph of even degree, then we can easily obtain a 2-factorization of $\mu {\cal G}$  by taking $\mu$ copies of  a 2-factorization  of ${\cal G}$; we state this obvious fact more precisely in Lemma~\ref{lem:basic-layers} below.
When ${\cal G}$ is of odd degree, this approach does not work because of the additional 1-factor; however, for some graphs ${\cal G}$, the additional 1-factors in the $\mu$ layers of ${\cal G}$ can  be chosen to accommodate any bipartite 2-factor types. We shall refer to this technique as {\em layering}, and explain it more precisely for the complete multigraph in  Theorem~\ref{the:layers}, and for the complete equipartite multigraph in Corollary~\ref{cor:layers} and Theorem~\ref{the:layers2}.

The two basic ingredients of layering are described in the next two (rather obvious) lemmas.

\begin{lemma}\label{lem:basic-layers}
Let ${\cal G}$ be a $2k$-regular graph with a  2-factorization of type $[ T_1,\ldots,T_k]$. Then $\mu {\cal G}$ admits a 2-factorization of type $[ T_1^{\la \mu \ra},\ldots,T_k^{\la \mu \ra}]$.
\end{lemma}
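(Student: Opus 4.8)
The plan is to give a direct, constructive argument. Let $\{F_1,\ldots,F_k\}$ be a 2-factorization of $\cG$ of type $[T_1,\ldots,T_k]$; that is, $\cG = F_1 \oplus \ldots \oplus F_k$ and each $F_i$ is a 2-factor of type $T_i$. First I would fix notation for the $\mu$ layers: recall that $\mu \cG$ is obtained by replacing each edge of $\cG$ with $\mu$ parallel edges, so there is a natural decomposition $\mu \cG = \cG^{(1)} \oplus \ldots \oplus \cG^{(\mu)}$, where each $\cG^{(j)}$ is an isomorphic copy of $\cG$ on the same vertex set (one parallel edge per edge of $\cG$). This is the trivial ``layering'' of $\mu\cG$, and it is here that the hypothesis that $\cG$ be of \emph{even} degree $2k$ matters, since each layer must itself be 2-factorizable with no leftover 1-factor.

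Next I would transport the given 2-factorization into each layer: for each $j \in \{1,\ldots,\mu\}$, the isomorphism $\cG \cong \cG^{(j)}$ carries $\{F_1,\ldots,F_k\}$ to a 2-factorization $\{F_1^{(j)},\ldots,F_k^{(j)}\}$ of $\cG^{(j)}$, with $F_i^{(j)}$ still of type $T_i$ (type is an isomorphism invariant of 2-regular graphs). Since the $\cG^{(j)}$ partition the edge set of $\mu\cG$, the union over all $i$ and $j$ of the $F_i^{(j)}$ is a decomposition of $\mu\cG$ into $k\mu$ 2-factors. Collecting them by the value of $i$, we obtain for each $i$ exactly $\mu$ 2-factors $F_i^{(1)},\ldots,F_i^{(\mu)}$, each of type $T_i$, so the resulting 2-factorization of $\mu\cG$ has type $[T_1^{\la \mu \ra},\ldots,T_k^{\la \mu \ra}]$, as required.

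There is essentially no obstacle here — the lemma is deliberately stated as an ``obvious'' building block, and the only thing to be careful about is bookkeeping: checking that the $\cG^{(j)}$ genuinely partition $E(\mu\cG)$ (immediate from the definition of the $\lambda$-fold graph), that each $F_i^{(j)}$ spans $V(\cG)=V(\mu\cG)$ and is 2-regular (inherited through the isomorphism), and that every edge of $\mu\cG$ lies in exactly one $F_i^{(j)}$ (it lies in a unique layer $\cG^{(j)}$, and within that layer in a unique $F_i^{(j)}$). The mild subtlety worth a remark is that the degree hypothesis is used precisely to rule out an odd-degree $\cG$, for which each layer would carry a surplus 1-factor and the naive argument would produce $\mu$ stray 1-factors rather than a clean 2-factorization; handling that case is exactly the content of the subsequent, more substantial results on layering.
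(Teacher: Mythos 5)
Your proof is correct and is exactly the argument the paper has in mind: the paper omits a formal proof, describing it only as "taking $\mu$ copies of a 2-factorization of ${\cal G}$" in the paragraph preceding the lemma, and your layer-by-layer construction is precisely that. Your closing remark about why even degree is needed also matches the paper's motivation for developing the more elaborate layering technique in Theorem~\ref{the:layers}.
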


\begin{lemma}\label{lem:layers}
Let ${\cal G}$ be the graph $K_n$ for $n$ even, or else $K_{n,n}$ for any $n$. Let $T$ be any admissible bipartite 2-factor type for $2{\cal G}$. Then $2 {\cal G}$ admits edge-disjoint 1-factors $I$ and $I'$ such that $I \oplus I'$ is a 2-factor of type $T$.
\end{lemma}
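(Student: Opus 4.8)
The plan is to handle the two cases—$\cal G = K_n$ with $n$ even, and $\cal G = K_{n,n}$—in a uniform way by first reducing the target 2-factor type $T = [c_1,\ldots,c_t]$ (all $c_i$ even, summing to $|V(\cal G)|$) to a bipartite 2-factor in $\cal G$ itself, then splitting that 2-factor canonically. First I would observe that since $\cal G$ is a graph in which every vertex has the right parity and $T$ is admissible, $\cal G$ contains a 2-factor $F$ of type $T$: for $K_{n,n}$ this is because every even 2-factor type summing to $2n$ is realizable (each cycle $C_{c_i}$ embeds as an equibipartite cycle, and one can partition the two sides appropriately), and for $K_n$ with $n$ even an admissible bipartite type is realizable by the standard results on cycle decompositions / 2-factorizations of complete graphs. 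Then $2\cal G$ contains two edge-disjoint copies $F_1, F_2$ of $F$ (one in each layer of $2\cal G = \cal G \oplus \cal G$), and $F_1 \oplus F_2$ is a spanning $4$-regular subgraph that is a disjoint union of "doubled cycles" $2C_{c_i}$.

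The key step is then to re-decompose each doubled cycle $2C_c$ (with $c$ even) into two edge-disjoint 1-factors $I_i, I_i'$ of $2C_c$ whose union is a single $c$-cycle. This is the combinatorial heart of the argument and is elementary: label the vertices of the cycle $v_0, v_1, \ldots, v_{c-1}$ in order; the doubled cycle $2C_c$ has, between consecutive vertices $v_j$ and $v_{j+1}$, two parallel edges, call them $e_j$ and $f_j$. Put the edges $e_j$ with $j$ even together with the edges $f_j$ with $j$ odd into $I$, and the complementary set ($e_j$ with $j$ odd, $f_j$ with $j$ even) into $I'$. Since $c$ is even, each of $I, I'$ meets every vertex exactly once, so both are 1-factors of $2C_c$; and $I \cup I'$ uses exactly one of $\{e_j, f_j\}$ for each $j$, hence is a single spanning $c$-cycle (in fact a copy of $C_c$). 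Taking $I = \bigsqcup_i I_i$ and $I' = \bigsqcup_i I_i'$ over all cycles of $F$ gives edge-disjoint 1-factors of $2\cal G$ with $I \oplus I'$ a 2-factor of type $[c_1,\ldots,c_t] = T$, as required.

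The only real obstacle is the very first step—knowing that $\cal G$ itself contains a 2-factor of type $T$—and even that is mild: for $K_{n,n}$ one writes $T$'s cycles greedily on the two sides (each cycle $C_{2a}$ uses $a$ vertices from each side), and for $K_n$ with $n$ even one invokes the existence of cycle decompositions of $K_n$ into prescribed even cycle lengths plus a 1-factor (which contains, in particular, a 2-factor of the prescribed even type when the lengths sum to $n$), or more simply constructs it directly from a Hamilton decomposition of $K_n$. Once $F$ is in hand, everything else is the explicit alternating-edge splitting of doubled even cycles described above, which needs no deep input. I would present the splitting lemma for a single doubled even cycle first, then assemble.
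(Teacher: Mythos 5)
There is a genuine gap --- in fact two concrete errors, one at each end of your argument. First, your opening reduction is not valid: you try to find a 2-factor of type $T$ in the \emph{simple} graph ${\cal G}$ itself, but $T$ is only assumed admissible for the multigraph $2{\cal G}$, and by the paper's definition of admissibility for $\lambda K_n$ with $\lambda\ge 2$ (and likewise for $2K_{n,n}$) the type $T$ may contain cycles of length $2$, i.e.\ pairs of parallel edges. Such a $T$ is not realizable in ${\cal G}$ at all, and the paper really does apply this lemma to exactly those types (see the types ``admissible for $2K_n$ but not for $K_n$'' in Corollary~\ref{cor:layers-small-even-n}, and the refinements of $[2m]$ in Theorem~\ref{the:layers2}). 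The step you flag as ``the only real obstacle'' is in fact a non-issue if you read the hypothesis correctly: by the paper's definition, ``$T$ is admissible for $2{\cal G}$'' \emph{means} that $2{\cal G}$ contains a 2-factor $F$ of type $T$, so no existence argument is needed and none should pass through ${\cal G}$.

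Second, your doubled-cycle splitting does not produce 1-factors. With $I=\{e_j: j \mbox{ even}\}\cup\{f_j: j \mbox{ odd}\}$, the vertex $v_j$ is covered by exactly one edge of $\{e_{j-1},f_{j-1}\}$ and exactly one edge of $\{e_j,f_j\}$, so $I$ has $c$ edges and is $2$-regular: it is a Hamiltonian cycle of $2C_c$, not a perfect matching, and likewise for $I'$. Your claim that each of $I,I'$ ``meets every vertex exactly once'' is false. The correct (and much shorter) splitting, which is what the paper does, works on a \emph{single} copy of each even cycle of $F$: an even cycle is the edge-disjoint union of its two alternate perfect matchings (for a 2-cycle, these are its two parallel edges). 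Taking the union of these matchings over all cycles of $F$ gives the required edge-disjoint 1-factors $I,I'$ of $2{\cal G}$ with $I\oplus I'=F$ of type $T$. So the whole detour through two layers and doubled cycles is both unnecessary and, as executed, incorrect.
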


\begin{proof}
Clearly, $2{\cal G}$ admits a 2-factor $F$ of type $T$. Since every cycle of $F$ is of even length, $F=I \oplus I'$ for some edge-disjoint 1-factors $I$ and $I'$.
\end{proof}

\begin{theo}\label{the:layers}
For $i=1,\ldots,\ell$, assume $\mu_i K_n$ admits a 2-factorization of type ${\cal T}_i=[ T_{i,1},\ldots,$ $ T_{i,k_i}]$, with $k_i=\lfloor \frac{\mu_i(n-1)}{2} \rfloor$. Let ${\cal O}=\{ i: \mu_i(n-1) \mbox{ is odd} \}$, and  let $\mu$ be a positive integer.
Then $\mu K_n$ admits a 2-factorization of type
$$\sqcup_{i=1}^{\ell} [T_{i,1}^{\la x_i \ra},\ldots,T_{i,k_i}^{\la x_i \ra} ] \sqcup [T_1,\ldots,T_{\lfloor \frac{\beta}{2} \rfloor}]$$
for all $(x_1,\ldots,x_{\ell}) \in \NN^{\ell}$ such that $\sum_{i=1}^{\ell} x_i \mu_i=\mu$, $\beta=\sum_{i \in {\cal O}} x_i$, and all admissible bipartite 2-factor types $T_1,\ldots,T_{\lfloor \frac{\beta}{2} \rfloor}$ for $2K_n$.
\end{theo}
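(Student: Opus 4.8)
The plan is to decompose $\mu K_n$ into layers, most of which are handled by Lemma~\ref{lem:basic-layers} and the leftover $1$-factors by Lemma~\ref{lem:layers}. First I would write $\mu K_n = \bigoplus_{i=1}^\ell x_i (\mu_i K_n)$, which is legitimate precisely because $\sum_i x_i \mu_i = \mu$. For each $i$, the hypothesis gives a $2$-factorization of $\mu_i K_n$ of type $\mathcal T_i$, and Lemma~\ref{lem:basic-layers} (applied $x_i$ times, or directly to $x_i(\mu_i K_n)$) yields a $2$-factorization of $x_i(\mu_i K_n)$ of type $[T_{i,1}^{\langle x_i\rangle},\ldots,T_{i,k_i}^{\langle x_i\rangle}]$ when $\mu_i(n-1)$ is even. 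When $\mu_i(n-1)$ is odd (i.e.\ $i \in \mathcal O$), the $2$-factorization of $\mu_i K_n$ includes a single $1$-factor in addition to the $k_i$ $2$-factors; so the $x_i$ copies together contribute $x_i k_i$ $2$-factors of the listed types plus $x_i$ leftover $1$-factors. Note $x_i k_i$ copies of type $T_{i,j}$ still matches $T_{i,j}^{\langle x_i\rangle}$ summed appropriately, so the ``bulk'' part of the target type is accounted for, and what remains unassembled is a collection of $\beta = \sum_{i\in\mathcal O} x_i$ edge-disjoint $1$-factors of $K_n$ (equivalently, a $\beta$-fold $1$-factor, which can be taken inside $\lfloor\beta/2\rfloor$ copies of $2K_n$ plus possibly one extra $1$-factor).

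Next I would group these $\beta$ leftover $1$-factors in pairs. Since $\mu_i(n-1)$ odd forces $n$ even (as $\mu_i(n-1)$ odd requires $n-1$ odd), the graph $\mathcal G = K_n$ is eligible for Lemma~\ref{lem:layers}. For each $t = 1,\ldots,\lfloor\beta/2\rfloor$ and each admissible bipartite $2$-factor type $T_t$ for $2K_n$, Lemma~\ref{lem:layers} produces edge-disjoint $1$-factors $I_t, I_t'$ of $2K_n$ with $I_t \oplus I_t'$ a $2$-factor of type $T_t$. The point is that any $2\lfloor\beta/2\rfloor$ of our leftover $1$-factors, viewed collectively as a $2\lfloor\beta/2\rfloor$-fold $1$-factor, can be repartitioned: $2j$ copies of a $1$-factor of $K_n$ form a copy of $2K_n$ restricted to its $1$-factorizable part — more carefully, $\beta$ edge-disjoint $1$-factors of $K_n$ is the same edge set as $\lfloor\beta/2\rfloor$ disjoint copies of $2I$ (for $I$ a $1$-factor) together with one leftover $1$-factor if $\beta$ is odd; but we do not even need the $1$-factors to coincide, since a $1$-factor together with a disjoint $1$-factor is just a bipartite $2$-factor, which Lemma~\ref{lem:layers} lets us re-split into any desired admissible bipartite type. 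So pairing up $2\lfloor\beta/2\rfloor$ of the leftover $1$-factors and applying Lemma~\ref{lem:layers} to each pair gives $2$-factors of types $T_1,\ldots,T_{\lfloor\beta/2\rfloor}$; if $\beta$ is odd, one $1$-factor remains, which is exactly the single $1$-factor permitted in a $2$-factorization of the (odd-degree) graph $\mu K_n$ — and indeed $\mu(n-1)$ is odd iff $\beta$ is odd, so the parities match and the final leftover $1$-factor is accounted for.

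Assembling: the $2$-factors from the bulk layers give $\sqcup_{i=1}^\ell [T_{i,1}^{\langle x_i\rangle},\ldots,T_{i,k_i}^{\langle x_i\rangle}]$, and the $2$-factors built from leftover $1$-factors give $[T_1,\ldots,T_{\lfloor\beta/2\rfloor}]$, together with a single $1$-factor exactly when $\mu(n-1)$ is odd; this is precisely the claimed $2$-factorization type. The main obstacle — really the only subtlety — is the bookkeeping around the leftover $1$-factors: one must check that $\beta$ has the right parity so that the ``spare'' $1$-factor of $\mu K_n$ (when $\mu(n-1)$ is odd) is correctly produced, and one must be careful that the leftover $1$-factors coming from different $\mu_i K_n$'s are pairwise edge-disjoint (they are, since they live in edge-disjoint copies $x_i(\mu_i K_n)$) so that Lemma~\ref{lem:layers} can be invoked on any chosen pairing. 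A minor point worth stating explicitly is why each $T_t$ being admissible bipartite for $2K_n$ is compatible with what Lemma~\ref{lem:layers} needs, namely that a pair of edge-disjoint $1$-factors of $K_n$ spans a $2K_n$ in the relevant sense; this follows because two edge-disjoint $1$-factors form a bipartite $2$-regular graph, hence decompose into even cycles, and any such can be re-combined and re-split as in the lemma. No other step requires more than routine verification.
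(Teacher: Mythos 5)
Your overall architecture matches the paper's: decompose $\mu K_n$ into $x_i$ layers of each $\mu_i K_n$, handle the even-degree layers by Lemma~\ref{lem:basic-layers}, collect the $\beta$ leftover 1-factors from the odd-degree layers, pair them up via Lemma~\ref{lem:layers}, and check that the parity of $\beta$ matches that of $\mu(n-1)$ (your observation that $\mu(n-1)\equiv\beta\pmod 2$ is correct and worth stating). However, there is a genuine gap at the one step that actually requires an idea. You assert that a pair of edge-disjoint leftover 1-factors ``is just a bipartite 2-factor, which Lemma~\ref{lem:layers} lets us re-split into any desired admissible bipartite type.'' That is not what Lemma~\ref{lem:layers} says, and it is false: the lemma guarantees that \emph{some} pair of edge-disjoint 1-factors of $2K_n$ has union of type $T$, not that the union of two \emph{given} 1-factors can be rearranged into type $T$. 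The union $I_{2k-1}\oplus I_{2k}$ is a fixed edge set, and the cycle type of a 2-regular graph is determined by its edge set --- there is nothing to ``re-split.'' Concretely, for $n=4$ the matchings $\{12,34\}$ and $\{13,24\}$ union to a single 4-cycle, while $\{12,34\}$ paired with a parallel copy of itself unions to two 2-cycles; which type you get depends entirely on which 1-factors you were handed, and the hypotheses give you no control over the leftover 1-factors of the seed 2-factorizations.

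The missing ingredient, which is exactly how the paper closes this gap, is a relabeling argument: each layer ${\cal G}_i^{(j)}$ is a complete multigraph on the common vertex set, hence invariant under any vertex permutation, and the symmetric group acts transitively on the perfect matchings of $K_n$. So for each $k$, one fixes $I_{2k-1}$, invokes Lemma~\ref{lem:layers} to obtain edge-disjoint 1-factors $I,I'$ of $2K_n$ with $I\oplus I'$ of type $T_k$ and with $I=I_{2k-1}$ (after a first relabeling if needed), and then permutes the vertices of the single layer containing $I_{2k}$ so that $I_{2k}$ becomes $I'$; this changes neither the graph that layer contributes nor the types of its 2-factors. Without this (or an equivalent) step, your argument does not establish that the paired 1-factors realize the prescribed types $T_1,\ldots,T_{\lfloor\beta/2\rfloor}$. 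The rest of your bookkeeping --- edge-disjointness of the layers within the multigraph, the assembly of the bulk types, and the disposition of the final 1-factor when $\beta$ is odd --- is fine.
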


\begin{proof}
For $i=1,\ldots,\ell$, let ${\cal F}_i=\{ F_{i,1},\ldots, F_{i,k_i} \}$ be a 2-factorization of $\mu_i K_n$  of type ${\cal T}_i=[ T_{i,1},\ldots, T_{i,k_i}]$.
Take any $(x_1,\ldots,x_{\ell}) \in \NN^{\ell}$ such that $\sum_{i=1}^{\ell} x_i \mu_i=\mu$. Let $\beta=\sum_{i \in {\cal O}} x_i$, and note that $\lfloor \frac{\beta}{2} \rfloor$ is the number of bipartite 2-factors that we can fill in as desired. Hence take
any admissible bipartite 2-factor types $T_1,\ldots,T_{\lfloor \frac{\beta}{2} \rfloor}$ for $2K_n$.

For $i=1,\ldots,\ell$, let ${\cal G}_i^{(1)},\ldots,{\cal G}_i^{(x_i)}$ be the $x_i$ copies of $\mu_i K_n$ on the same vertex set, and let ${\cal F}_i^{(1)},\ldots,{\cal F}_i^{(x_i)}$, respectively, be their 2-factorizations of type ${\cal T}_i=[ T_{i,1},\ldots, T_{i,k_i}]$. Now $i \in {\cal O}$ if and only if ${\cal F}_i^{(j)}$, for every $j \in \{ 1,\ldots,x_i\}$, is also a 2-factorization of ${\cal G}_i^{(j)}-I_i^{(j)}$, for some 1-factor $I_i^{(j)}$ of ${\cal G}_i^{(j)}$. Since
$$| \{ I_i^{(j)}: i \in {\cal O}, j=1,\ldots,x_i \} |=\sum_{i \in {\cal O}} x_i=\beta,$$
we can relabel the 1-factors so that
$$\{ I_i^{(j)}: i \in {\cal O}, j=1,\ldots,x_i \} = \{ I_1,\ldots,I_{\beta} \}.$$
Furthermore, for each $k=1,\ldots, \lfloor \frac{\beta}{2} \rfloor$, we can relabel the vertices in the graph ${\cal G}_i^{(j)}$ corresponding to the 1-factor $I_{2k}$ so that $F_k=I_{2k-1} \oplus I_{2k}$ is a 2-factor of type $T_k$.

Let ${\cal H}=\oplus_{i \in {\cal O}} \oplus_{j=1}^{x_i} {\cal G}_i^{(j)}$ and ${\cal H}'=\oplus_{i \not\in {\cal O}} \oplus_{j=1}^{x_i} {\cal G}_i^{(j)}$, so that $\mu K_n= {\cal H} \oplus {\cal H}'$.
Observe that either ${\cal H}$ (if $\beta$ is even) or ${\cal H}-I_{\beta}$ (if $\beta$ is odd) decomposes as ${\cal H}_1 \oplus {\cal H}_2$, where
$${\cal H}_1=\oplus_{i \in {\cal O}} ( ({\cal G}_i^{(1)} - I_i^{(1)}) \oplus \ldots \oplus ({\cal G}_i^{(x_i)} - I_i^{(x_i)}) )$$
and
$${\cal H}_2= (I_1 \oplus I_2) \oplus \ldots \oplus (I_{2\lfloor \frac{\beta}{2} \rfloor -1} \oplus I_{2\lfloor \frac{\beta}{2} \rfloor}).$$
Now ${\cal H}_1$ admits a 2-factorization $\cup_{i \in {\cal O}} \cup_{j=1}^{x_i} {\cal F}_i^{(j)}$ of type $\sqcup_{i \in {\cal O}} [T_{i,1}^{\la x_i \ra},\ldots,T_{i,k_i}^{\la x_i \ra} ]$, and ${\cal H}_2$ admits a 2-factorization $\{ F_1,\ldots,F_{\lfloor \frac{\beta}{2} \rfloor} \}$ of type $[T_1,\ldots,T_{\lfloor \frac{\beta}{2} \rfloor}]$.

Finally, for each $i \not\in {\cal O}$, we have that ${\cal F}_i^{(1)} \cup \ldots \cup {\cal F}_i^{(x_i)}$ is a 2-factorization of $\oplus_{j=1}^{x_i} {\cal G}_i^{(j)}$ of type $[T_{i,1}^{\la x_i \ra},\ldots,T_{i,k_i}^{\la x_i \ra} ]$. Hence $\cup_{i \not\in {\cal O}} \cup_{j=1}^{x_i} {\cal F}_i^{(j)}$ is a 2-factorization of ${\cal H}$ of type $\sqcup_{i \not\in {\cal O}} [T_{i,1}^{\la x_i \ra},\ldots,T_{i,k_i}^{\la x_i \ra} ]$.

It follows that $\mu K_n= {\cal H}_1 \oplus {\cal H}_2 \oplus {\cal H}'$ admits a 2-factorization of type $\sqcup_{i=1}^{\ell} [T_{i,1}^{\la x_i \ra},\ldots,T_{i,k_i}^{\la x_i \ra} ] \sqcup [T_1,\ldots,T_{\lfloor \frac{\beta}{2} \rfloor}]$, as claimed.
\end{proof}

Next, we describe how to construct 2-factorizations of the complete equipartite multigraph from 2-factorizations of complete multigraphs by combining layering with detachment. Different results are obtained depending on whether layering (Corollary~\ref{cor:layers}) or detachment (Theorem~\ref{the:layers2}) is applied first. In the second case, additional 2-factor types with cycles of lengths not a multiple of $m$  can be obtained when $n$ is even and $m$ is odd.

\begin{cor}\label{cor:layers}
For $i=1,\ldots,\ell$, assume $\mu_i K_n$ admits a 2-factorization of type ${\cal T}_i=[ T_{i,1},\ldots,$ $ T_{i,k_i}]$,  for $k_i=\lfloor \frac{\mu_i(n-1)}{2} \rfloor$. Let ${\cal O}=\{ i: \mu_i (n-1) \mbox{ is odd} \}$, and let $\lambda$ and $m$ be positive integers.
Then $\lambda K_{n \times m}$ admits a 2-factorization of type
$$\sqcup_{i=1}^{\ell} [(mT_{i,1})^{\la x_i \ra},\ldots,(mT_{i,k_i})^{\la x_i \ra} ] \sqcup [mT_1,\ldots,mT_{\lfloor \frac{\beta}{2} \rfloor}]$$
for all $(x_1,\ldots,x_{\ell}) \in \NN^{\ell}$ such that $\sum_{i=1}^{\ell} x_i \mu_i=\lambda m$, $\beta=\sum_{i \in {\cal O}} x_i$, and all admissible bipartite 2-factor types $T_1,\ldots,T_{\lfloor \frac{\beta}{2} \rfloor}$ for $2K_n$.
\end{cor}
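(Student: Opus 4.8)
The plan is to obtain Corollary~\ref{cor:layers} by composing Theorem~\ref{the:layers} with the detachment machinery of Corollary~\ref{cor:direct}(iii). The key observation is that $\lambda K_{n\times m}$ with parts of size $m$ is exactly the graph produced by detachment from $\lambda m K_n$, so a 2-factorization of $\lambda m K_n$ of a prescribed type yields, via Corollary~\ref{cor:direct}(iii), a 2-factorization of $\lambda K_{n\times m}$ whose type is obtained by multiplying every cycle length by $m$. Thus I would first apply Theorem~\ref{the:layers} with $\mu$ replaced by $\lambda m$: given the hypothesized 2-factorizations of $\mu_i K_n$ of type ${\cal T}_i$ (with $k_i=\lfloor\frac{\mu_i(n-1)}{2}\rfloor$), and given $(x_1,\ldots,x_\ell)\in\NN^\ell$ with $\sum_i x_i\mu_i=\lambda m$, $\beta=\sum_{i\in{\cal O}}x_i$, and any admissible bipartite 2-factor types $T_1,\ldots,T_{\lfloor\beta/2\rfloor}$ for $2K_n$, Theorem~\ref{the:layers} gives a 2-factorization of $\lambda m K_n$ of type
$$\sqcup_{i=1}^{\ell} [T_{i,1}^{\la x_i \ra},\ldots,T_{i,k_i}^{\la x_i \ra} ] \sqcup [T_1,\ldots,T_{\lfloor \frac{\beta}{2} \rfloor}].$$

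Next I would invoke Corollary~\ref{cor:direct}(iii). A 2-factorization of $\lambda m K_n$ is in particular a resolvable cycle decomposition, hence a cycle decomposition ${\cal D}$ of $\lambda m K_n$, so Theorem~\ref{thm:main} applies and produces a cycle decomposition ${\cal D}'$ of $\lambda K_{n\times m}$. By Corollary~\ref{cor:direct}(iii), if ${\cal D}$ is equivalent to a 2-factorization of type $[S_1,\ldots,S_r]$, then ${\cal D}'$ is equivalent to a 2-factorization of type $[mS_1,\ldots,mS_r]$. Applying this with the type displayed above, and using that scaling distributes over the multiset union $\sqcup$ and over repetition (i.e. $m$ applied to $[T_{i,1}^{\la x_i\ra},\ldots]$ gives $[(mT_{i,1})^{\la x_i\ra},\ldots]$, where $mT$ for a 2-factor type $T=[c_1,\ldots,c_t]$ means $[mc_1,\ldots,mc_t]$ as in the paper's convention $nM=[nx:x\in M]$), yields a 2-factorization of $\lambda K_{n\times m}$ of type
$$\sqcup_{i=1}^{\ell} [(mT_{i,1})^{\la x_i \ra},\ldots,(mT_{i,k_i})^{\la x_i \ra} ] \sqcup [mT_1,\ldots,mT_{\lfloor \frac{\beta}{2} \rfloor}],$$
which is precisely the claimed type.

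The only point requiring a small amount of care — and the closest thing to an obstacle — is bookkeeping of the parameters: one must check that the hypotheses of Theorem~\ref{the:layers} are met with $\mu=\lambda m$ (they are, verbatim, since the $\mu_i$, ${\cal T}_i$, $k_i$, and ${\cal O}$ are unchanged and the constraint becomes $\sum_i x_i\mu_i=\lambda m$), and that the admissibility condition on the bipartite types $T_1,\ldots,T_{\lfloor\beta/2\rfloor}$ is the same in both statements (it is — admissibility for $2K_n$ in both). There is no subtlety about whether $mT_j$ remains bipartite or admissible for the target graph, because Corollary~\ref{cor:direct}(iii) already guarantees the resulting object is a genuine 2-factorization of $\lambda K_{n\times m}$ of the stated type; we are merely reading off its type. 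Hence the proof is a two-line composition: apply Theorem~\ref{the:layers} with $\mu=\lambda m$, then apply Corollary~\ref{cor:direct}(iii) and simplify the scaled type.
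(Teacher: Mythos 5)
Your proof is correct and follows exactly the paper's own two-step argument: apply Theorem~\ref{the:layers} with $\mu=\lambda m$ to get a 2-factorization of $\lambda m K_n$ of the intermediate type, then apply Corollary~\ref{cor:direct}(iii) to scale every cycle length by $m$. The parameter bookkeeping you flag is the only thing to check, and you handle it as the paper does (implicitly).
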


\begin{proof}
Using Theorem~\ref{the:layers}, we first obtain a 2-factorization of $\lambda m K_n$ of type
$$\sqcup_{i=1}^{\ell} [T_{i,1}^{\la x_i \ra},\ldots,T_{i,k_i}^{\la x_i \ra} ] \sqcup [T_1,\ldots,T_{\lfloor \frac{\beta}{2} \rfloor}].$$
 Corollary~\ref{cor:direct} then yields  a 2-factorization of $\lambda K_{n \times m}$ of type $\sqcup_{i=1}^{\ell} [(mT_{i,1})^{\la x_i \ra},\ldots,(mT_{i,k_i})^{\la x_i \ra} ] \sqcup [mT_1,\ldots,mT_{\lfloor \frac{\beta}{2} \rfloor}]$.
\end{proof}

\begin{theo}\label{the:layers2}
Let $n$ be even, and for $i=1,\ldots,\ell$, assume $\mu_i m K_n$ admits a 2-factorization of type ${\cal T}_i=[ T_{i,1},\ldots, T_{i,k_i}]$,  for $k_i=\lfloor \frac{\mu_im (n-1)}{2} \rfloor$. Let ${\cal O}=\{ i: \mu_i m(n-1) \mbox{ is odd} \}$, and let $\lambda$  be a positive integer.
Then $\lambda K_{n \times m}$ admits a 2-factorization of type
$$\sqcup_{i=1}^{\ell} [(mT_{i,1})^{\la x_i \ra},\ldots,(mT_{i,k_i})^{\la x_i \ra} ] \sqcup [T_1,\ldots,T_{\lfloor \frac{\beta}{2} \rfloor}]$$
for all $(x_1,\ldots,x_{\ell}) \in \NN^{\ell}$ such that $\sum_{i=1}^{\ell} x_i \mu_i=\lambda$, $\beta=\sum_{i \in {\cal O}} x_i$, and all admissible bipartite 2-factor types $T_1,\ldots,T_{\lfloor \frac{\beta}{2} \rfloor}$ for $2K_{n \times m}$ that are refinements of $[(2m)^{\la \frac{n}{2} \ra }]$.
\end{theo}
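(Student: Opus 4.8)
The plan is to combine detachment with layering in the reverse order to Corollary~\ref{cor:layers}: first detach, then layer. For each $i=1,\dots,\ell$ I would apply Corollary~\ref{cor:direct}(iii) to the given 2-factorization of $\mu_i m K_n$ of type $\mathcal T_i$, obtaining a 2-factorization of $\mu_i K_{n\times m}$ of type $[mT_{i,1},\dots,mT_{i,k_i}]$, together with a single $1$-factor $I_i'$ when $i\in\mathcal O$ (i.e.\ when $\mu_i m(n-1)$ is odd). Since $\sum_{i=1}^{\ell}x_i\mu_i=\lambda$, I can realize $\lambda K_{n\times m}$ as an edge-disjoint union of $x_i$ layers of $\mu_i K_{n\times m}$, $i=1,\dots,\ell$, on the common vertex set $\cup_{j\in\ZZ_n}V_j$. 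This produces $\sum_i x_i k_i$ cycle $2$-factors, contributing $\sqcup_{i=1}^{\ell}[(mT_{i,1})^{\langle x_i\rangle},\dots,(mT_{i,k_i})^{\langle x_i\rangle}]$ to the final type, plus $\beta=\sum_{i\in\mathcal O}x_i$ leftover $1$-factors, pairwise edge-disjoint because they sit in distinct layers. The key structural point is the shape of a detached $1$-factor: a $1$-factor of $\mu_i m K_n$ induces a perfect matching of the $n$ ``coarse'' vertices of $\ZZ_n$, and detachment only redistributes edges among offspring of the same vertex, so $I_i'$ is a disjoint union of $n/2$ perfect matchings, one between each pair of parts $V_j,V_{j'}$ matched by the original $1$-factor. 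If $\beta$ is odd I set one leftover $1$-factor aside to be the single $1$-factor of the output (consistent, since $\lambda m(n-1)\equiv\beta\pmod 2$), and I pair the remaining $2\lfloor\beta/2\rfloor$ of them into $\lfloor\beta/2\rfloor$ pairs.

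Next I would realize the prescribed bipartite $2$-factors. Fix a pair $(I,I')$ of leftover $1$-factors and a target type $T_j$: an admissible bipartite $2$-factor type for $2K_{n\times m}$ refining $[(2m)^{\langle n/2\rangle}]$, so $T_j$ is a union of $n/2$ groups, each an even-integer multiset summing to $2m$. Say $I'$ lives in layer $\mathcal G$. First relabel $\mathcal G$ by a part-structure-preserving permutation so that $I$ and $I'$ induce the \emph{same} perfect matching of the $n$ parts, pairing them as $\{A_1,B_1\},\dots,\{A_{n/2},B_{n/2}\}$; this does not alter the edge set of $\mathcal G$ (the complete equipartite multigraph is invariant under such permutations), so it preserves edge-disjointness of the layers and the types of the other $2$-factors of $\mathcal G$. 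Then $I\oplus I'$ splits over the part-pairs, and on $A_\ell\cup B_\ell$ it is the union of a perfect matching from $I$ and a perfect matching from $I'$. Crucially, these two matchings lie in \emph{different} layers, so they are edge-disjoint in $\lambda K_{n\times m}$ even if they coincide as vertex-pairings; hence, by relabeling the vertices of $\mathcal G$ inside $A_\ell$ and $B_\ell$ — equivalently, choosing a permutation of $\{1,\dots,m\}$ whose cycle type halves the $\ell$-th group of $T_j$ — the restriction of $I\oplus I'$ to $A_\ell\cup B_\ell$ becomes a disjoint union of cycles with exactly the lengths in that group. Doing this for every $\ell$ makes $F_j:=I\oplus I'$ a $2$-factor of $\lambda K_{n\times m}$ of type $T_j$.

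Assembling all of the detached cycle $2$-factors, the $2$-factors $F_1,\dots,F_{\lfloor\beta/2\rfloor}$, and the possible leftover $1$-factor then gives a $2$-factorization of $\lambda K_{n\times m}$ of the claimed type. A routine count, $\sum_i x_i\lfloor\mu_i m(n-1)/2\rfloor+\lfloor\beta/2\rfloor=\lfloor\lambda m(n-1)/2\rfloor$, confirms that the number of $2$-factors is correct and that the leftover $1$-factor appears exactly when $\lambda m(n-1)$ is odd.

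I expect the main obstacle to be the bookkeeping around the relabelings: one must ensure that each permutation used to align part-pairings or to realize a target type acts on a layer that belongs to exactly one pair, so that earlier pairs and the already-placed $mT_{i,r}$-factors are left undisturbed — this works because each layer with $i\in\mathcal O$ contributes only a single $1$-factor, hence enters at most one pair. The one genuinely new ingredient compared with Corollary~\ref{cor:layers} is the observation that, since the two $1$-factors of a pair come from different layers, their union inside a pair of parts can be \emph{any} even refinement of $2m$ (not merely a blow-up of $m$-cycles), which is precisely what yields the extra $2$-factor types with cycle lengths not divisible by $m$, at the cost of the refinement restriction on $T_1,\dots,T_{\lfloor\beta/2\rfloor}$.
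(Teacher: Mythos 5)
Your proposal is correct and follows essentially the same route as the paper's proof: detach each $\mu_i m K_n$ to $\mu_i K_{n\times m}$ via Corollary~\ref{cor:direct}(iii), layer the copies, observe that each leftover detached 1-factor decomposes as $n/2$ perfect matchings between matched pairs of parts, and then relabel parts and vertices within parts of one layer per pair so that the union of two paired 1-factors realizes the prescribed bipartite refinement of $[(2m)^{\la n/2\ra}]$ on each part-pair. Your explicit justification of the structure of the detached 1-factor and of why the relabelings are harmless matches what the paper gleans from the proof of Theorem~\ref{thm:main} and its use of Lemma~\ref{lem:layers}.
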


\begin{proof}
Take any $(x_1,\ldots,x_{\ell}) \in \NN^{\ell}$ such that $\sum_{i=1}^{\ell} x_i \mu_i=\lambda$. Let $\beta=\sum_{i \in {\cal O}} x_i$, and choose any admissible bipartite 2-factor types $T_1,\ldots,T_{\lfloor \frac{\beta}{2} \rfloor}$ for $2K_{n \times m}$ that are refinements of $[(2m)^{\la \frac{n}{2} \ra }]$. Note that, for each $s \in \{ 1, \ldots, \lfloor \frac{\beta}{2} \rfloor \}$, we can write $T_s=\sqcup_{k=1}^{\frac{n}{2}} T_{s,k}$, where each $T_{s,k}$ is a refinement of $[2m]$.

By Corollary~\ref{cor:direct}, for each $i \in \{ 1,\ldots,\ell \}$, the graph $\mu_i K_{n \times m}$ admits a 2-factorization of type $[ mT_{i,1},\ldots, mT_{i,k_i}]$.
For each $i \in \{ 1,\ldots, \ell \}$, take $x_i$ copies of $\mu_i K_{n \times m}$ and denote them by ${\cal G}_i^{(1)},\ldots,{\cal G}_i^{(x_i)}$. We shall assume that each of these graphs has the same vertex set $V_1 \cup \ldots \cup V_n$, with $V_1,\ldots,V_{n}$  its parts of size $m$. Let ${\cal F}_i^{(j)}$ be a 2-factorization of ${\cal G}_i^{(j)}$ of type $[ mT_{i,1},\ldots, mT_{i,k_i}]$.

Observe that $i \in {\cal O}$ if and only if the graphs ${\cal G}_i^{(j)}$, for all $j$, are of odd degree. For each $i \in {\cal O}$ and $j\in \{ 1,\ldots, x_i\}$, let $I_i^{(j)}$ be the 1-factor of ${\cal G}_i^{(j)}$ such that ${\cal F}_i^{(j)}$ is also a 2-factorization of ${\cal G}_i^{(j)}-I_i^{(j)}$. Since $\sum_{i \in {\cal O}} x_i=\beta$, we can relabel these 1-factors as
$$\{ I_i^{(j)}: i \in {\cal O}, j=1,\ldots,x_i \} = \{ I_1,\ldots,I_{\beta} \},$$
and the corresponding graphs as
$$\{ {\cal G}_i^{(j)}: i \in {\cal O}, j=1,\ldots,x_i \} = \{ {\cal G}_1,\ldots,{\cal G}_{\beta} \},$$
so that each $I_s$, for $s=1,\ldots,\beta$, is a 1-factor in ${\cal G}_s$. From the proof of Theorem~\ref{thm:main} it can be gleaned that each $I_s=\cup_{k=1}^{\frac{n}{2}} I_{s,k}$, where (after an appropriate relabeling of the parts) each $I_{s,k}$, for $k=1,\ldots,\frac{n}{2}$, is a 1-factor of ${\cal G}_s[V_{2k-1}\cup V_{2k}]$, which is isomorphic to $K_{m,m}$. Observe that the graphs $K_{m,m}-\tilde{I}$ are pairwise isomorphic for all 1-factors $\tilde{I}$ of $K_{m,m}$. This will allow us to arbitrarily relabel the vertices in each part $V_i$, for each graph ${\cal G}_s$.

Fix any $s \in \{ 1,\ldots,\lfloor \frac{\beta}{2} \rfloor \}$ and $k \in \{ 1,\ldots,\frac{n}{2} \}$. In ${\cal G}_{2s}$, relabel the vertices in part $V_{2k}$ so that $I_{2s-1,k} \oplus I_{2s,k}$ is a 2-factor of type $T_{s,k}$ in the graph $({\cal G}_{2s-1}\oplus {\cal G}_{2s})[V_{2k-1}\cup V_{2k}]$, which is isomorphic to $\eta_s K_{m,m}$ for some $\eta_s \ge 2$; this is possible by Lemma~\ref{lem:layers} since $T_{s,k}$ is a bipartite refinement of $[2m]$. Hence for each $s \in \{ 1, \ldots, \lfloor \frac{\beta}{2} \rfloor \}$, we have that $F_s=I_{2s-1} \oplus I_{2s}$ is a 2-factor of $\eta_s K_{n \times m}$ of type $T_{s}$.

Let ${\cal H}=\oplus_{i \in {\cal O}} \oplus_{j=1}^{x_i} {\cal G}_i^{(j)}=\oplus_{s=1}^{\beta} {\cal G}_s$ and ${\cal H}'=\oplus_{i \not\in {\cal O}} \oplus_{j=1}^{x_i} {\cal G}_i^{(j)}$, so that $\lambda K_{n \times m}= {\cal H} \oplus {\cal H}'$.
Observe that either ${\cal H}$ (if $\beta$ is even) or ${\cal H}-I_{\beta}$ (if $\beta$ is odd) decomposes as ${\cal H}_1 \oplus {\cal H}_2$, where
$${\cal H}_1=\oplus_{s=1}^{\beta} ({\cal G}_s - I_s) \quad \mbox{and} \quad{\cal H}_2= (I_1 \oplus I_2) \oplus \ldots \oplus (I_{2\lfloor \frac{\beta}{2} \rfloor -1} \oplus I_{2\lfloor \frac{\beta}{2} \rfloor}).$$
Now ${\cal H}_1$ admits a 2-factorization $\cup_{i \in {\cal O}} \cup_{j=1}^{x_i} {\cal F}_i^{(j)}$ of type $\sqcup_{i \in {\cal O}} [(mT_{i,1})^{\la x_i \ra},\ldots,(mT_{i,k_i})^{\la x_i \ra} ]$, and ${\cal H}_2$ admits a 2-factorization $\{ F_1,\ldots,F_{\lfloor \frac{\beta}{2} \rfloor} \}$ of type $[T_1,\ldots,T_{\lfloor \frac{\beta}{2} \rfloor}]$.

Finally, for each $i \not\in {\cal O}$, we have that ${\cal F}_i^{(1)} \cup \ldots \cup {\cal F}_i^{(x_i)}$ is a 2-factorization of $\oplus_{j=1}^{x_i} {\cal G}_i^{(j)}$ of type $[(mT_{i,1})^{\la x_i \ra},\ldots,(mT_{i,k_i})^{\la x_i \ra} ]$. Hence $\cup_{i \not\in {\cal O}} \cup_{j=1}^{x_i} {\cal F}_i^{(j)}$ is a 2-factorization of ${\cal H}$ of type $\sqcup_{i \not\in {\cal O}} [(mT_{i,1})^{\la x_i \ra},\ldots,(mT_{i,k_i})^{\la x_i \ra} ]$.

It follows that, as claimed, $\lambda K_{n \times m}= {\cal H}_1 \oplus {\cal H}_2 \oplus {\cal H}'$ admits a 2-factorization of type $\sqcup_{i=1}^{\ell} [(mT_{i,1})^{\la x_i \ra},\ldots,$ $(mT_{i,k_i})^{\la x_i \ra} ] \sqcup [T_1,\ldots,T_{\lfloor \frac{\beta}{2} \rfloor}]$.
 \end{proof}

\section{The Oberwolfach Problem for complete equipartite multigraphs } \label{OPSec}

In this section, we use Corollary~\ref{cor:direct}, as well as basic layering, to obtain concrete new results on the Oberwolfach Problem for complete equipartite multigraphs.
The Oberwolfach Problem $OP({\cal G}; T)$ for the graph ${\cal G}$ and admissible 2-factor type $T$ asks whether ${\cal G}$ admits a 2-factorization  of type $[T^\ast]$. If $T=[m_1,\ldots,m_t]$, we also write $OP({\cal G};m_1,\ldots,m_t)$ instead of $OP({\cal G}; T)$.
Corollary~\ref{cor:direct}(iii)  gives the following general result.

\begin{theo}\label{thm:OP}
If $OP(\lambda m K_n; T)$ has a solution, then $OP(\lambda K_{n \times m}; mT)$ has a solution.
\end{theo}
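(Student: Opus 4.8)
The plan is to recognize that $OP(\lambda m K_n; T)$ and $OP(\lambda K_{n \times m}; mT)$ are statements about 2-factorizations of \emph{uniform} type, and then to invoke Corollary~\ref{cor:direct}(iii) essentially verbatim. First I would unpack the hypothesis: by definition, a solution to $OP(\lambda m K_n; T)$ is a 2-factorization of $\lambda m K_n$ of type $[T^\ast]=[T,\ldots,T]$ (with $\lfloor \frac{\lambda m(n-1)}{2} \rfloor$ copies of $T$, together with a single 1-factor in the case $\lambda m(n-1)$ is odd). Since a 2-factorization is equivalent to a resolvable cycle decomposition, this yields a cycle decomposition ${\cal D}=\{C_1,\ldots,C_k\}$ of $\lambda m K_n$ that is equivalent to a 2-factorization of type $[T^\ast]$.

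Next I would apply Theorem~\ref{thm:main} to ${\cal D}$, using the vertex labelling $V(\lambda m K_n)=\ZZ_n$ and $V(\lambda K_{n \times m})=\cup_{i \in \ZZ_n} V_i$, to obtain the cycle decomposition ${\cal D}'=\{C_1',\ldots,C_k'\}$ of $\lambda K_{n \times m}$. Then Corollary~\ref{cor:direct}(iii) applies directly: since ${\cal D}$ is equivalent to a 2-factorization of type $[T_1,\ldots,T_\ell]$ with $T_1=\cdots=T_\ell=T$, the decomposition ${\cal D}'$ is equivalent to a 2-factorization of type $[mT_1,\ldots,mT_\ell]=[(mT)^\ast]$. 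That is precisely a solution to $OP(\lambda K_{n \times m}; mT)$. In passing I would note that $mT$ is indeed admissible for $\lambda K_{n \times m}$ (the cycle lengths $mc_i$ sum to $mn$, the number of vertices of $\lambda K_{n \times m}$), so the conclusion is non-vacuous.

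There is essentially no obstacle: the theorem is the specialization of Corollary~\ref{cor:direct}(iii) to the uniform case, and the only thing to verify is that detachment multiplies every 2-factor type by the same factor $m$, so that a uniform type is sent to a uniform type — which is immediate from the statement of the corollary. The one point deserving a brief remark is the odd-degree case, where ``2-factorization'' carries an extra 1-factor: I would simply observe that $\lambda K_{n \times m}$ and $\lambda m K_n$ have the same degree $\lambda m(n-1)$, and that the construction in Theorem~\ref{thm:main} produces colour class $k+1$ as a 1-factor of $\lambda K_{n \times m}$ exactly when $\lambda m(n-1)$ is odd, so the leftover matching is carried along automatically and no additional argument is needed.
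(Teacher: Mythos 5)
Your proposal is correct and follows exactly the paper's own route: the paper derives Theorem~\ref{thm:OP} as an immediate consequence of Corollary~\ref{cor:direct}(iii) applied to the uniform type $[T^\ast]$, which is precisely what you do. Your extra remarks on admissibility of $mT$ and on the leftover 1-factor in the odd-degree case are accurate but not needed beyond what the corollary already guarantees.
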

For a uniform 2-factor type $T'$, $OP(\lambda K_{n \times m}; T')$  has been completely solved  \cite{MR2028231}, so Theorem~\ref{thm:OP} yields no new results when $T$ is uniform, however, it leads to  simpler proofs of existing results.

A large number of specific cases of the Oberwolfach Problem with non-uniform factors are known to have solutions (see \cite{MR2246267}), however, we shall  limit our application of Theorem~\ref{thm:OP} to three of the most comprehensive results. The case of a bipartite 2-factor type is postponed to Section~\ref{sec:bip-Kn}.

In our first example, we consider the case where the number parts in $\lambda K_{n \times m}$ is small.

\begin{cor}
Let $n$ be odd,  $n \le 40$, and let $T$ be an admissible 2-factor type for $K_n$ such that $T \not\in \{ [4,5],[3^{\la 2 \ra}, 5] \}$. Then $OP(\lambda K_{n \times m}; mT)$  has a solution.
\end{cor}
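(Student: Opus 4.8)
The plan is to push the problem down to the complete multigraph via Theorem~\ref{thm:OP}, and then build a solution for $\lambda m K_n$ by layering copies of a known solution to the classical Oberwolfach Problem for $K_n$. Concretely, I would first recall the solution of $OP(K_n;T)$ for small orders: for every odd $n\le 40$ and every admissible $2$-factor type $T$ for $K_n$ with $T\notin\{[4,5],[3^{\la 2\ra},5]\}$, the instance $OP(K_n;T)$ has a solution. The two excluded types are exactly the odd-order exceptions below $40$; the remaining classical exceptions, of types $[3^{\la 2\ra}]$ and $[3^{\la 4\ra}]$, occur only at $n=6$ and $n=12$ and hence do not arise under the hypothesis that $n$ is odd. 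Since $n$ is odd, $K_n$ is regular of even degree $n-1$, so such a solution is precisely a $2$-factorization of $K_n$ of type $[T^{\la (n-1)/2\ra}]$, i.e.\ of the uniform type $[T^\ast]$ with $\frac{n-1}{2}$ factors.

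Next I would invoke Lemma~\ref{lem:basic-layers} with $\mu$ taken to be $\lambda m$: layering $\lambda m$ copies of the above $2$-factorization of $K_n$ yields a $2$-factorization of $\lambda m K_n$ of type $[T^{\la \lambda m(n-1)/2\ra}]=[T^\ast]$, that is, $OP(\lambda m K_n;T)$ has a solution. No auxiliary $1$-factor enters here, precisely because $n-1$ is even, so $\lambda m K_n$ is of even degree. Finally, Theorem~\ref{thm:OP}, applied with the given $\lambda$ and $m$, converts this solution of $OP(\lambda m K_n;T)$ into a solution of $OP(\lambda K_{n\times m}; mT)$, which is the assertion.

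There is no deep obstacle in this argument; essentially all of the difficulty is absorbed by the cited small-order classification of solvable Oberwolfach instances, so the main things to verify explicitly are: (i) that the two exceptional types named in the statement are indeed exactly the odd-order exceptions with $n\le 40$ (the even-order exceptions being ruled out by parity); and (ii) that the layering step of Lemma~\ref{lem:basic-layers} is unconditional once the base $2$-factorization of $K_n$ is available, so that it introduces no new exceptions and imposes no divisibility constraints on $\lambda$ or $m$. The only mild point worth remarking is that, since $n$ is odd, each of $K_n$, $\lambda m K_n$, and $\lambda K_{n\times m}$ has even degree, so all the relevant $2$-factorizations are free of a $1$-factor and Theorem~\ref{thm:OP} applies verbatim.
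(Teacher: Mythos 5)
Your proposal is correct and follows exactly the paper's route: cite the small-order classification of $OP(K_n;T)$ from \cite{MR1768284,MR2100737,MR2239309,MR2675892}, layer $\lambda m$ copies via Lemma~\ref{lem:basic-layers} (valid since $n$ odd makes $K_n$ of even degree), and finish with Theorem~\ref{thm:OP}. Your added observation that the exceptions $[3^{\la 2\ra}]$ and $[3^{\la 4\ra}]$ only occur for $n\in\{6,12\}$ and are therefore excluded by parity is a correct elaboration of what the paper compresses into ``with our assumptions.''
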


\begin{proof}
Papers \cite{MR1768284,MR2100737,MR2239309,MR2675892} jointly prove that $OP(K_n; T)$ has a solution whenever $n \le 40$ and $T \not\in \{ [3^{\la 2 \ra}],[3^{\la 4 \ra}],[4,5],[3^{\la 2 \ra}, 5] \}$. With our assumptions, Lemma~\ref{lem:basic-layers} implies that $OP(\lambda mK_n; T)$ has a solution, and hence $OP(\lambda K_{n \times m}; mT)$  has a solution by Theorem~\ref{thm:OP}.
\end{proof}

In our second example, we consider 2-factors consisting of two cycles.

\begin{cor}
Let $\lambda, m, n, r,s$ be positive integers such that $r,s \ge 3$, $n=r+s$,  $(\lambda m, [r,s]) \ne (1,[4,5])$, and $\lambda m \equiv 0 \pmod{4}$ if $[r,s]=[3,3]$. Then $OP(\lambda K_{n \times m}; rm,sm)$ has a solution.
\end{cor}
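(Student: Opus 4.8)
The plan is to reduce the statement to an application of Theorem~\ref{thm:OP} together with a known result on the two-cycle case of the Oberwolfach Problem for complete multigraphs. Specifically, I would invoke the theorem of Bryant, Danziger, and others (building on Rees, and on Traetta) stating that $OP(\lambda K_n; r, s)$ has a solution whenever $r, s \ge 3$, $n = r + s$, $(\lambda, [r,s]) \ne (1, [4,5])$, and $\lambda \equiv 0 \pmod 4$ when $[r,s] = [3,3]$. First I would check that the hypotheses on $\lambda m, r, s$ guarantee the corresponding hypotheses for the complete multigraph $\lambda m K_n$: since $r, s \ge 3$ and $n = r + s$ are given directly, the only conditions to verify are that $(\lambda m, [r,s]) \ne (1, [4,5])$ (which is exactly assumed) and that $\lambda m \equiv 0 \pmod 4$ whenever $[r,s] = [3,3]$ (also exactly assumed). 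Hence $OP(\lambda m K_n; r, s)$ has a solution, i.e., $\lambda m K_n$ admits a 2-factorization of type $[[r,s]^\ast]$.

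Second, I would apply Theorem~\ref{thm:OP} with $T = [r,s]$: since $OP(\lambda m K_n; [r,s])$ has a solution, $OP(\lambda K_{n \times m}; m[r,s])$ has a solution, and $m[r,s] = [rm, sm]$ by the definition of $mM$ for a multiset $M$ given in Section~\ref{termsec}. This is precisely the assertion that $OP(\lambda K_{n \times m}; rm, sm)$ has a solution, completing the argument.

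The only genuinely substantive point is identifying the correct reference for the two-cycle Oberwolfach Problem over $\lambda K_n$ and confirming that its hypotheses match ours after replacing $\lambda$ by $\lambda m$; the necessary conditions there are exactly $r, s \ge 3$, $n = r+s$, plus the two well-known exceptions/obstructions, so there is no obstacle beyond bookkeeping. Everything else is a direct quotation of Theorem~\ref{thm:OP} (which itself is Corollary~\ref{cor:direct}(iii)) and the elementary observation $m[r,s] = [rm,sm]$. I do not expect any real difficulty; the proof is essentially two lines once the input result is cited.

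\begin{proof}
By a theorem of Bryant et al.\ (see the references on the two-cycle Oberwolfach Problem), $OP(\lambda' K_n; r, s)$ has a solution whenever $r, s \ge 3$, $n = r + s$, $(\lambda', [r,s]) \ne (1, [4,5])$, and $\lambda' \equiv 0 \pmod 4$ if $[r,s] = [3,3]$. Applying this with $\lambda' = \lambda m$, our hypotheses guarantee that $OP(\lambda m K_n; r, s)$ has a solution, i.e., $\lambda m K_n$ admits a 2-factorization of type $[[r,s]^\ast]$. By Theorem~\ref{thm:OP} (with $T = [r,s]$), $OP(\lambda K_{n \times m}; m[r,s])$ has a solution. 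Since $m[r,s] = [rm, sm]$, this is exactly the statement that $OP(\lambda K_{n \times m}; rm, sm)$ has a solution.
\end{proof}
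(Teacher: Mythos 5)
Your proof is correct and follows exactly the paper's argument: cite the known two-table Oberwolfach result for complete multigraphs, apply it with multiplicity $\lambda m$, and then invoke Theorem~\ref{thm:OP} (detachment) to pass to $\lambda K_{n\times m}$. The only quibble is attribution --- the input result is Traetta's complete solution to the two-table Oberwolfach problem \cite{MR3033656}, not a theorem of Bryant et al. --- but this does not affect the validity of the argument.
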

\begin{proof}
In \cite{MR3033656}  it is shown that for $r,s\geq 3$ and $n=r+s$,  $OP(\lambda K_n;r,s)$  has a solution  except for $OP(\lambda K_6;3,3)$ with $\lambda  \not \equiv 0 \pmod 4$, and $OP(K_9;4,5)$, none of which has a solution. Hence, with our assumptions, $OP(\lambda mK_{n}; r,s)$ has a solution, and hence $OP(\lambda K_{n \times m}; rm,sm)$ has a solution by
Theorem~\ref{thm:OP}.
\end{proof}

Our last example is perhaps the most interesting of the three.

\begin{cor}
Let $\lambda$ and $m$ be positive integers. There are infinitely many primes $n$ such that $OP(\lambda K_{n \times m};mT)$ has a solution for any admissible 2-factor type $T$ of $K_n$.
\end{cor}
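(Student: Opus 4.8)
The plan is to reduce the statement about complete equipartite multigraphs to the known solution of the Oberwolfach Problem for complete multigraphs $\lambda m K_n$ via Theorem~\ref{thm:OP}, and then to invoke the landmark result that the Oberwolfach Problem $OP(K_n;T)$ has a solution for every admissible 2-factor type $T$ whenever $n$ is prime. That theorem, due to Bryant and Scharaschkin (building on work exploiting the rotational $\mathbb{Z}_n$-action available when $n$ is prime), supplies infinitely many primes $n$ for which $OP(K_n;T)$ is solvable for \emph{all} admissible $T$; in fact it is conjectured and known in many cases that \emph{every} odd prime works, but for the statement as phrased we only need infinitely many.

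First I would fix $\lambda$ and $m$, and pick a prime $n$ from the infinite family for which $OP(K_n;T)$ has a solution for every admissible 2-factor type $T$ of $K_n$. The issue is that Theorem~\ref{thm:OP} requires a solution to $OP(\lambda m K_n; T)$, i.e. to the \emph{$\lambda m$-fold} complete graph, not merely to $K_n$. Here I would split into two cases according to the parity of $n-1$ (equivalently, of $n$, since $n$ is an odd prime unless $n=2$, which we exclude). When $n$ is odd, $K_n$ is $(n-1)$-regular of even degree, so Lemma~\ref{lem:basic-layers} applies directly: a 2-factorization of $K_n$ of type $[T^\ast]$ yields a 2-factorization of $\lambda m K_n$ of type $[T^{\la \lambda m \ra}]$, which is exactly a solution of $OP(\lambda m K_n;T)$. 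Then Theorem~\ref{thm:OP} gives a solution of $OP(\lambda K_{n\times m};mT)$. Since the family of primes is infinite and the odd primes are cofinite in it, this already yields infinitely many primes $n$ with the desired property for the fixed $\lambda,m$.

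A subtlety to address is that the admissible 2-factor types for $\lambda m K_n$ form a (weakly) larger set than those for $K_n$: when $\lambda m \ge 2$, a 2-factor of $\lambda m K_n$ may contain cycles of length $2$, whereas $K_n$ admits no such cycles. Thus to cover \emph{every} admissible $T$ of $K_n$ the above suffices, because every admissible $T$ for $K_n$ (all $c_i\ge 3$, $\sum c_i = n$) is in particular admissible for $\lambda m K_n$, and Lemma~\ref{lem:basic-layers} handles it. The statement only promises a solution "for any admissible 2-factor type $T$ of $K_n$," so we do not need to realize the extra types with 2-cycles, and no further argument is required; if one wished to, one could additionally invoke the layering machinery of Theorem~\ref{the:layers} to absorb $2$-cycles, but that is beyond what the corollary claims.

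The main obstacle, and the only substantive input, is the existence of the infinite family of primes $n$ for which $OP(K_n;T)$ is universally solvable — this is a deep external theorem and not something to be reproved here; everything downstream (Lemma~\ref{lem:basic-layers} to pass from $K_n$ to $\lambda m K_n$, then Theorem~\ref{thm:OP} to pass from $\lambda m K_n$ to $\lambda K_{n\times m}$) is routine bookkeeping. I would therefore structure the proof as: (1) cite the prime-order Oberwolfach result to obtain the infinite family; (2) for each odd prime $n$ in it and each admissible $T$ of $K_n$, apply Lemma~\ref{lem:basic-layers} to get $OP(\lambda m K_n;T)$; (3) apply Theorem~\ref{thm:OP} to conclude $OP(\lambda K_{n\times m};mT)$ has a solution; (4) observe this holds for infinitely many primes $n$, completing the proof.
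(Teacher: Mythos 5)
Your proposal is correct and follows essentially the same route as the paper: cite Bryant--Scharaschkin for the infinite family of primes, use Lemma~\ref{lem:basic-layers} (valid since $K_n$ has even degree for odd primes $n$) to pass to $\lambda m K_n$, then apply Theorem~\ref{thm:OP} to reach $\lambda K_{n\times m}$. The extra remarks on $2$-cycles and parity are harmless but not needed.
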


\begin{proof}
By \cite{MR2558441}, there are infinitely many primes $n$  such that $OP(K_n;T)$ has a solution for all admissible 2-factor types $T$. By Lemma~\ref{lem:basic-layers}, the same statement holds for $\lambda mK_n$, and hence the results follows by
 Theorem~\ref{thm:OP}.
\end{proof}

\section{The Hamilton-Waterloo Problem for complete multigraphs} \label{hwk_n}

The Hamilton-Waterloo Problem $HWP({\cal G}; T_1,T_2;\alpha_1,\alpha_2)$ for the $r$-regular graph ${\cal G}$, 2-factor types $T_1$ and $T_2$, and non-negative integers $\alpha_1$ and $\alpha_2$ such that $\alpha_1+\alpha_2=\lfloor \frac{r}{2} \rfloor$ asks whether ${\cal G}$ admits a 2-factorization of type $[T_1^{\la \alpha_1 \ra},T_2^{\la \alpha_2 \ra}]$. Note that $HWP({\cal G}; T_1,T_2;\lfloor \frac{r}{2} \rfloor,0)$ is simply $OP({\cal G}; T_1)$.

To our knowledge, no results on the Hamilton-Waterloo Problem for $\lambda K_n$ with $\lambda \ge 2$ are known to date.
In this section, we utilize our layering technique to construct  solutions to the Hamilton-Waterloo Problem for complete multigraphs from known solutions to the Hamilton-Waterloo Problem for complete graphs. We shall begin with a general result, and follow with concrete applications.

\begin{theo}\label{the:layers-HWP}
For $i=1,\ldots,\ell$, assume $HWP(\mu_i K_n; \bar{T}_1,\bar{T}_2;\alpha_i,\gamma_i)$ has a solution. Let ${\cal O}=\{ i: \mu_i(n-1) \mbox{ is odd} \}$, and  let $\lambda$ be a positive integer. Take any $(x_1,\ldots,x_{\ell}) \in \NN^{\ell}$ such that $\sum_{i=1}^{\ell} x_i \mu_i=\lambda$, and let $\beta=\sum_{i \in {\cal O}} x_i$.
\begin{enumerate}[(i)]
\item If $\beta \le 1$, then $HWP(\lambda K_n; \bar{T}_1,\bar{T}_2;\sum_{i=1}^{\ell} x_i\alpha_i,\sum_{i=1}^{\ell} x_i\gamma_i)$ has a solution.
\item If $\bar{T}_1$ is bipartite, then $HWP(\lambda K_n; \bar{T}_1,\bar{T}_2;\sum_{i=1}^{\ell} x_i\alpha_i+\bar{\alpha},\sum_{i=1}^{\ell} x_i\gamma_i+\bar{\gamma})$ has a solution for all nonnegative integers $\bar{\alpha}, \bar{\gamma}$ such that
\begin{enumerate}[(a)]
\item $\bar{\alpha}+\bar{\gamma}=\lfloor \frac{\beta}{2} \rfloor$ and
\item $\bar{\gamma}=0$ if $\bar{T}_2$ is not bipartite.
\end{enumerate}
\end{enumerate}
\end{theo}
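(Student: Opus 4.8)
The plan is to apply Theorem~\ref{the:layers-HWP}'s underlying machinery, namely Theorem~\ref{the:layers}, specialized to the two-type setting. Recall that $HWP(\mu_i K_n; \bar T_1, \bar T_2; \alpha_i, \gamma_i)$ having a solution means $\mu_i K_n$ admits a 2-factorization of type $\mathcal T_i = [\bar T_1^{\langle \alpha_i\rangle}, \bar T_2^{\langle \gamma_i\rangle}]$, with $\alpha_i + \gamma_i = \lfloor \frac{\mu_i(n-1)}{2}\rfloor = k_i$. So the hypotheses of Theorem~\ref{the:layers} are met for each $i$. Feeding in the given tuple $(x_1,\ldots,x_\ell)$ with $\sum x_i \mu_i = \lambda$ (playing the role of $\mu$ in Theorem~\ref{the:layers}), we get that $\lambda K_n$ admits a 2-factorization of type
$$\sqcup_{i=1}^\ell [\bar T_1^{\langle x_i\alpha_i\rangle}, \bar T_2^{\langle x_i\gamma_i\rangle}] \sqcup [T_1,\ldots,T_{\lfloor \beta/2\rfloor}]$$
for any choice of admissible bipartite 2-factor types $T_1,\ldots,T_{\lfloor\beta/2\rfloor}$ for $2K_n$, where $\beta = \sum_{i\in\mathcal O} x_i$. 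The first multiset-union term is exactly $[\bar T_1^{\langle \sum x_i\alpha_i\rangle}, \bar T_2^{\langle \sum x_i\gamma_i\rangle}]$.

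For part (i), when $\beta \le 1$ we have $\lfloor \beta/2\rfloor = 0$, so the leftover bipartite block is empty and the 2-factorization obtained is precisely of type $[\bar T_1^{\langle \sum x_i\alpha_i\rangle}, \bar T_2^{\langle \sum x_i\gamma_i\rangle}]$. Checking that $\sum x_i\alpha_i + \sum x_i\gamma_i = \sum x_i k_i = \lfloor \frac{\lambda(n-1)}{2}\rfloor$ (this uses $\beta \le 1$, so at most one of the $\mu_i(n-1)$ is odd among those with $x_i > 0$, hence the floors add up correctly), we conclude $HWP(\lambda K_n; \bar T_1, \bar T_2; \sum x_i\alpha_i, \sum x_i\gamma_i)$ has a solution.

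For part (ii), with $\bar T_1$ bipartite, the idea is to fill the $\lfloor\beta/2\rfloor$ free bipartite slots with copies of $\bar T_1$ and, when $\bar T_2$ is also bipartite, some copies of $\bar T_2$. Given $\bar\alpha, \bar\gamma \ge 0$ with $\bar\alpha + \bar\gamma = \lfloor\beta/2\rfloor$ and $\bar\gamma = 0$ unless $\bar T_2$ is bipartite, we set $T_1 = \cdots = T_{\bar\alpha} = \bar T_1$ and $T_{\bar\alpha+1} = \cdots = T_{\lfloor\beta/2\rfloor} = \bar T_2$. Each chosen $T_s$ is an admissible bipartite 2-factor type for $2K_n$: $\bar T_1$ is admissible for $\mu_i K_n$ hence (being bipartite, so all cycle lengths even and summing to $n$) admissible for $2K_n$, and likewise $\bar T_2$ when bipartite. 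Theorem~\ref{the:layers} then yields a 2-factorization of $\lambda K_n$ of type $[\bar T_1^{\langle \sum x_i\alpha_i + \bar\alpha\rangle}, \bar T_2^{\langle \sum x_i\gamma_i + \bar\gamma\rangle}]$, which is the assertion of $HWP(\lambda K_n; \bar T_1, \bar T_2; \sum x_i\alpha_i + \bar\alpha, \sum x_i\gamma_i + \bar\gamma)$.

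The only genuinely nontrivial point — and the one I would flag as the main obstacle — is the arithmetic bookkeeping confirming that the claimed parameters $(\sum x_i\alpha_i + \bar\alpha, \sum x_i\gamma_i + \bar\gamma)$ actually sum to $\lfloor\frac{\lambda(n-1)}{2}\rfloor$, i.e.\ that the resulting object is a full 2-factorization of $\lambda K_n$ (not a proper subgraph, and with the right number of 2-factors plus possibly a 1-factor). This amounts to tracking parities: $\sum_{i=1}^\ell x_i k_i = \sum_i x_i\lfloor\frac{\mu_i(n-1)}{2}\rfloor$, and $\sum_i x_i\mu_i(n-1) = \lambda(n-1)$, with the discrepancy between $\sum x_i\lfloor\cdot\rfloor$ and $\lfloor\sum x_i(\cdot)\rfloor$ governed exactly by $\beta = \sum_{i\in\mathcal O} x_i$; one checks $\sum_i x_i k_i + \lfloor\beta/2\rfloor = \lfloor\frac{\lambda(n-1)}{2}\rfloor$, which is precisely why Theorem~\ref{the:layers} produces $\lfloor\beta/2\rfloor$ extra slots and why in case (i) no slots remain. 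Since this is exactly the accounting already carried out inside the proof of Theorem~\ref{the:layers}, I would simply invoke it rather than redo it.
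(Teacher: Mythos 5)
Your proposal is correct and follows essentially the same route as the paper: both reduce the statement to Theorem~\ref{the:layers} applied to the 2-factorizations ${\cal T}_i=[\bar{T}_1^{\la \alpha_i \ra},\bar{T}_2^{\la \gamma_i \ra}]$, with the $\lfloor \beta/2 \rfloor$ free bipartite slots vanishing in case (i) and being filled by $[\bar{T}_1^{\la \bar{\alpha} \ra},\bar{T}_2^{\la \bar{\gamma} \ra}]$ in case (ii). The additional checks you flag (admissibility of $\bar{T}_1,\bar{T}_2$ for $2K_n$ and the floor-function accounting) are indeed already absorbed into Theorem~\ref{the:layers}, exactly as the paper treats them.
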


\begin{proof}
Assume $\mu_i K_n$ admits a 2-factorization of type ${\cal T}_i=[ \bar{T}_1^{\la \alpha_i\ra}, \bar{T}_2^{\la \gamma_i\ra} ]$,
for all $i=1,\ldots,\ell$. Take any $(x_1,\ldots,x_{\ell}) \in \NN^{\ell}$ such that $\sum_{i=1}^{\ell} x_i \mu_i=\lambda$, and let $\beta=\sum_{i \in {\cal O}} x_i$.
\begin{enumerate}[(i)]
\item If $\beta \le 1$, then $\lfloor \frac{\beta}{2} \rfloor =0$, and by Theorem~\ref{the:layers}, $\lambda K_n$ admits a 2-factorization of type
    $ \sqcup_{i=1}^{\ell} [ \bar{T}_1^{\la x_i \alpha_i\ra}, \bar{T}_2^{\la x_i \gamma_i\ra} ]=[ \bar{T}_1^{\la \sum_{i=1}^{\ell} x_i \alpha_i\ra}, \bar{T}_2^{ \la \sum_{i=1}^{\ell} x_i \gamma_i\ra} ]$.

\item Assume $\bar{T}_1$ is bipartite, and let $\bar{\alpha}, \bar{\gamma}$ be nonnegative integers satisfying (a) and (b). Now we use Theorem~\ref{the:layers} with $[T_1,\ldots,T_{\lfloor \frac{\beta}{2} \rfloor}]=[\bar{T}_1^{\la \bar{\alpha} \ra}, \bar{T}_2^{\la \bar{\gamma} \ra}]$ to show that $\lambda K_n$ admits a 2-factorization of type
    $ \sqcup_{i=1}^{\ell} [ \bar{T}_1^{\la x_i \alpha_i\ra}, \bar{T}_2^{\la x_i \gamma_i\ra} ] \sqcup [\bar{T}_1^{\la \bar{\alpha} \ra}, \bar{T}_2^{\la \bar{\gamma} \ra}]=[ \bar{T}_1^{\la \sum_{i=1}^{\ell} x_i \alpha_i +\bar{\alpha} \ra}, \bar{T}_2^{\la \sum_{i=1}^{\ell} x_i \gamma_i+ \bar{\gamma}\ra} ]$.
\end{enumerate}
In both cases, the result follows immediately.
\end{proof}

In practice, the following corollary of Theorem~\ref{the:layers-HWP} will be more convenient to use for $n$ odd.

\begin{cor}\label{cor:layers-HWP-odd}
Let $T_1$ and $T_2$ be admissible 2-factor types for $K_n$ with $n$ odd, and let $\lambda \ge 2$.
\begin{enumerate}[(i)]
\item If $HWP(K_n;T_1,T_2;\frac{n-1}{2},0)$, $HWP(K_n;T_1,T_2;0,\frac{n-1}{2})$, and $HWP(K_n;T_1,T_2;\alpha,\frac{n-1}{2}-\alpha)$ all have solutions, then $HWP(\lambda K_n;T_1,T_2;\alpha',\lambda \frac{n-1}{2}-\alpha')$ has a solution for all $\alpha' \in \{ 0,\ldots,\lambda \frac{n-1}{2} \}$ such that $\alpha' \equiv \alpha \pmod{\frac{n-1}{2}}$.
\item If $HWP(K_n;T_1,T_2;\alpha,\frac{n-1}{2}-\alpha)$ has a solution for all $\alpha \in \{ 0,\ldots,\frac{n-1}{2} \}$, then \\ $HWP(\lambda K_n;T_1,T_2;\alpha',\lambda \frac{n-1}{2}-\alpha')$ has a solution for all $\alpha' \in \{ 0,\ldots,\lambda \frac{n-1}{2} \}$.
\end{enumerate}
\end{cor}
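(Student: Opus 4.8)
The plan is to derive both parts of Corollary~\ref{cor:layers-HWP-odd} from Theorem~\ref{the:layers-HWP} by a judicious choice of the parameters $\ell$, the $\mu_i$, and the $x_i$. Since $n$ is odd, every $\mu_i K_n$ is regular of even degree $\mu_i(n-1)$ precisely when $\mu_i$ is even, and of odd degree when $\mu_i$ is odd; hence $\mathcal{O} = \{ i : \mu_i \text{ odd} \}$. The two natural building blocks are $K_n$ itself (taking some $\mu_i = 1$, which lands in $\mathcal{O}$) and $2K_n$ (taking some $\mu_i = 2$, which does not), since $HWP(2K_n; T_1, T_2; \alpha_i, \gamma_i)$ with $\alpha_i + \gamma_i = n-1$ can be assembled from the three given solutions for $K_n$ by Lemma~\ref{lem:basic-layers}: two disjoint copies of $K_n$, one with parameters $(\frac{n-1}{2}, 0)$ and one with $(0, \frac{n-1}{2})$, give $HWP(2K_n; T_1,T_2; \frac{n-1}{2}, \frac{n-1}{2})$, and more generally layering a $(\frac{n-1}{2},0)$-solution with a $(0,\frac{n-1}{2})$-solution, or two copies of an $(\alpha, \frac{n-1}{2}-\alpha)$-solution, realizes $HWP(2K_n;T_1,T_2;j,n-1-j)$ for $j \in \{0, \alpha, \frac{n-1}{2}, \frac{n-1}{2}+\alpha, n-1\}$ and similar combinations. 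The real work is bookkeeping: showing the achievable pairs $(\alpha', \gamma')$ exhaust the claimed congruence class.

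For part~(i): I would set $\ell = 2$ with $\mu_1 = 1$, $\mu_2 = 2$, so $\mathcal{O} = \{1\}$. For the $K_n$ layer I take whichever of the three given $K_n$-solutions I need: the solution with parameters $(\alpha_1, \gamma_1)$ equal to $(\frac{n-1}{2},0)$, $(0,\frac{n-1}{2})$, or $(\alpha, \frac{n-1}{2}-\alpha)$. For the $2K_n$ layer I use a solution of $HWP(2K_n; T_1,T_2;\alpha_2,\gamma_2)$ built by Lemma~\ref{lem:basic-layers} from two of the $K_n$-solutions, so that $(\alpha_2,\gamma_2)$ can be any of $(\frac{n-1}{2},\frac{n-1}{2})$, $(n-1,0)$, $(0,n-1)$, $(\frac{n-1}{2}+\alpha, \frac{n-1}{2}-\alpha)$, $(2\alpha, n-1-2\alpha)$, etc. Then choosing $x_1 = 1$ and $x_2 = \frac{\lambda-1}{2}$ when $\lambda$ is odd (so $\beta = x_1 = 1 \le 1$ and part~(i) of Theorem~\ref{the:layers-HWP} applies), or $x_1 = 0$ and $x_2 = \frac{\lambda}{2}$ when $\lambda$ is even (here $\beta = 0$), one obtains $HWP(\lambda K_n; T_1,T_2; x_1\alpha_1 + x_2\alpha_2, \ldots)$. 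As $x_2$ ranges over the allowed values and $(\alpha_1,\gamma_1)$, $(\alpha_2,\gamma_2)$ over the small menu above, the first coordinate $x_1\alpha_1 + x_2\alpha_2$ runs through $\alpha + t\cdot\frac{n-1}{2}$ for every integer $t$ with $0 \le \alpha + t\frac{n-1}{2} \le \lambda\frac{n-1}{2}$; this is exactly the claimed set $\{\alpha' \in \{0,\ldots,\lambda\frac{n-1}{2}\} : \alpha' \equiv \alpha \pmod{\frac{n-1}{2}}\}$. Verifying that every residue representative is hit — handling the parity of $\lambda$ and the endpoint cases $\alpha' = 0$ and $\alpha' = \lambda\frac{n-1}{2}$ separately — is the one place that needs care.

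For part~(ii): the hypothesis now gives $HWP(K_n; T_1,T_2;\alpha,\frac{n-1}{2}-\alpha)$ for \emph{all} $\alpha$, so in particular the three solutions required in part~(i) are available for every choice, and moreover $HWP(2K_n; T_1,T_2; j, n-1-j)$ holds for every $j \in \{0,\ldots,n-1\}$ by layering two single-copy solutions. Taking $\ell = 2$, $\mu_1 = 1$, $\mu_2 = 2$ again, with $x_1 \in \{0,1\}$ of the right parity to match $\lambda$ and $x_2 = \lfloor \lambda/2 \rfloor$, the first coordinate $x_1\alpha_1 + x_2 \cdot j$ — with $\alpha_1$ free in $\{0,\ldots,\frac{n-1}{2}\}$ and $j$ free in $\{0,\ldots,n-1\}$ — visibly covers all of $\{0,1,\ldots,\lambda\frac{n-1}{2}\}$, since for any target $\alpha'$ we may write $\alpha' = x_1\alpha_1 + x_2 j$ with $x_2 = \lfloor\lambda/2\rfloor$, choose $j = \min(\lfloor \alpha'/x_2\rfloor, n-1)$ and absorb the remainder (which is less than $\frac{n-1}{2}$ once $\lambda \ge 2$) into $\alpha_1$ via the $x_1 = 1$ layer. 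The only obstacle, again minor, is the edge case $\lambda = 2$ (where $x_2 = 1$ and the $2K_n$ layer alone already gives every $j \in \{0,\ldots,n-1\} = \{0,\ldots,\lambda\frac{n-1}{2}\}$, so part~(ii) is immediate) and confirming that no parity or range constraint is violated; the result then follows immediately.
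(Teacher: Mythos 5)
There is a genuine gap, and it originates in your very first computation of $\mathcal{O}$. Since $n$ is \emph{odd}, $n-1$ is even, so $\mu_i(n-1)$ is even for every positive integer $\mu_i$; hence $\mathcal{O}=\{i:\mu_i(n-1)\mbox{ is odd}\}=\emptyset$ and $\beta=0$ no matter how many layers are copies of $K_n$. You have this backwards (your claim that $\mathcal{O}=\{i:\mu_i\mbox{ odd}\}$ would be correct for $n$ even), and the error forces you into an artificial construction: you restrict to at most one $K_n$ layer ($x_1\le 1$, to keep $\beta\le 1$) and pad with $2K_n$ layers. The paper instead takes all $\mu_i=1$, notes $\mathcal{O}=\emptyset$, writes $\alpha'=q\frac{n-1}{2}+\alpha$ with $0\le q\le\lambda$, and applies Theorem~\ref{the:layers-HWP}(i) with $\ell=3$, $(\alpha_1,\alpha_2,\alpha_3)=(\frac{n-1}{2},\alpha,0)$ and $(x_1,x_2,x_3)=(q,1,\lambda-q-1)$ (or $\ell=1$ when $q=\lambda$), which covers the whole congruence class at once.

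Beyond the conceptual error, your covering argument does not close. With $\ell=2$ as you specify, all $x_2$ copies of the $2K_n$ layer carry the \emph{same} pair $(\alpha_2,\gamma_2)$, so for even $\lambda$ (where you set $x_1=0$) the achievable values are just $\frac{\lambda}{2}\alpha_2$ for $\alpha_2$ in your six-element menu. For example, with $n=11$, $\alpha=1$, $\lambda=4$ the targets are $\alpha'\in\{1,6,11,16\}$, but $2\alpha_2$ for $\alpha_2\in\{0,1,2,5,6,10\}$ gives $\{0,2,4,10,12,20\}$, none of which is $\equiv 1\pmod 5$. Repairing this requires several distinct $2K_n$ layers (so $\ell>2$) and a careful parity/endpoint analysis --- precisely the bookkeeping you defer --- whereas the correct observation $\mathcal{O}=\emptyset$ makes all of it unnecessary. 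A similar range problem occurs in your part~(ii) argument: the ``remainder'' you absorb into the single $K_n$ layer is bounded by $\lfloor\lambda/2\rfloor$, not by $\frac{n-1}{2}$, so it need not be absorbable when $\lambda$ is large relative to $n$. (Part~(ii) should simply be quoted as an immediate consequence of part~(i), as the paper does.)
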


\begin{proof}
\begin{enumerate}[(i)]
\item Take any $\alpha' \in \{ 0,\ldots,\lambda \frac{n-1}{2} \}$ such that $\alpha' \equiv \alpha \pmod{\frac{n-1}{2}}$. We apply Theorem~\ref{the:layers-HWP}(i) with all $\mu_i=1$. Since $n$ is odd, we have ${\cal O}=\emptyset$ and $\beta=0$.  It suffices to find non-negative integers $\alpha_1,\ldots,\alpha_{\ell}$ and $x_1,\ldots,x_{\ell}$ such that $\lambda=\sum_{i=1}^{\ell} x_i$, $\alpha'= \sum_{i=1}^{\ell} x_i\alpha_i$, and $HWP(K_n;T_1,T_2;\alpha_i,\frac{n-1}{2}-\alpha_i)$ has a solution for all $i$.

Write $\alpha'=q \frac{n-1}{2}+\alpha$ so that $0 \le q \le \lambda$. If $q=\lambda$, then $\alpha=0$, and we let $\ell=1$, $\alpha_1=\frac{n-1}{2}$, $x_1=q$. Hence we may assume $q < \lambda$. In this case, we let $\ell=3$, $\alpha_1=\frac{n-1}{2}$, $\alpha_2=\alpha$, $\alpha_3=0$, $x_1=q$, $x_2=1$, and $x_3=\lambda-q-1$. Since $HWP(K_n;T_1,T_2;\frac{n-1}{2},0)$, $HWP(K_n;T_1,T_2;0,\frac{n-1}{2})$, and $HWP(K_n;T_1,T_2;\alpha,\frac{n-1}{2}-\alpha)$ all have solutions, the result follows from Theorem~\ref{the:layers-HWP}(i).

\item This follows immediately from (i).
\end{enumerate}
\end{proof}

Since a complete solution to the Hamilton-Waterloo Problem for $\lambda K_n$ with $\lambda \ge 2$ and both 2-factor types bipartite will be given in Section~\ref{sec:bip-Kn}, we shall limit the analogue of the above corollary for $n$ even to the case when exactly one of the 2-factor types is bipartite.

\begin{cor}\label{cor:layers-HWP-even}
Let $T_1$ and $T_2$ be admissible 2-factor types for $K_n$ with $n$ even, where $T_1$ is bipartite and $T_2$ is not. Let $\lambda \ge 2$.
\begin{enumerate}[(i)]
\item If $HWP(K_n;T_1,T_2;\frac{n-2}{2},0)$, $HWP(K_n;T_1,T_2;0,\frac{n-2}{2})$, and
    $HWP(K_n;T_1,T_2;\alpha,\frac{n-2}{2}-\alpha)$ all have solutions, then $HWP(\lambda K_n;T_1,T_2;\alpha',\lfloor \lambda \frac{n-1}{2} \rfloor -\alpha')$ has a solution for all $\alpha' \in \{ \lfloor \frac{\lambda }{2} \rfloor,\ldots,\lfloor \lambda \frac{n-1}{2} \rfloor \}$ such that $\alpha' -\lfloor \frac{\lambda }{2} \rfloor \equiv \alpha \pmod{\frac{n-2}{2}}$.
\item If $HWP(K_n;T_1,T_2;\alpha,\frac{n-2}{2}-\alpha)$ has a solution for all $\alpha \in \{ 0,\ldots,\frac{n-2}{2} \}$, then \\ $HWP(\lambda K_n;T_1,T_2;\alpha',\lfloor \lambda \frac{n-1}{2} \rfloor -\alpha')$ has a solution for all $\alpha' \in \{ \lfloor \frac{\lambda }{2} \rfloor,\ldots,\lfloor \lambda \frac{n-1}{2} \rfloor \}$.
\end{enumerate}
\end{cor}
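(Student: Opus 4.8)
The plan is to mimic the proof of Corollary~\ref{cor:layers-HWP-odd}, but working modulo $\frac{n-2}{2}$ instead of $\frac{n-1}{2}$ and carrying along the extra bipartite 2-factors that arise from layering an odd number of copies of $K_n$. Since $n$ is even, $n-1$ is odd, so every copy of $K_n$ carries a 1-factor; hence in the notation of Theorem~\ref{the:layers-HWP} we have $\mu_i=1$ for all $i$, all $i$ lie in $\mathcal O$, and $\beta=\sum_i x_i=\lambda$. Thus $\lfloor\frac\beta2\rfloor=\lfloor\frac\lambda2\rfloor$, which is exactly the number of extra bipartite 2-factors Theorem~\ref{the:layers-HWP}(ii) lets us fill in, and it accounts for the shift by $\lfloor\frac\lambda2\rfloor$ in the range of $\alpha'$.

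First I would prove (i). Fix $\alpha'\in\{\lfloor\frac\lambda2\rfloor,\ldots,\lfloor\lambda\frac{n-1}2\rfloor\}$ with $\alpha'-\lfloor\frac\lambda2\rfloor\equiv\alpha\pmod{\frac{n-2}2}$, and write $\alpha'-\lfloor\frac\lambda2\rfloor=q\frac{n-2}2+\alpha$; a short computation using $\alpha'\le\lfloor\lambda\frac{n-1}2\rfloor=\lambda\frac{n-2}2+\lfloor\frac\lambda2\rfloor$ shows $0\le q\le\lambda$. Now apply Theorem~\ref{the:layers-HWP}(ii) with all $\mu_i=1$ and $\bar T_1=T_1$, $\bar T_2=T_2$; since $T_2$ is not bipartite, condition (b) forces $\bar\gamma=0$, so $\bar\alpha=\lfloor\frac\lambda2\rfloor$. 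It then suffices to choose nonnegative integers $\ell$, $\alpha_1,\ldots,\alpha_\ell$, $x_1,\ldots,x_\ell$ with $\sum x_i=\lambda$, $\sum x_i\alpha_i=\alpha'-\lfloor\frac\lambda2\rfloor$, and $HWP(K_n;T_1,T_2;\alpha_i,\frac{n-2}2-\alpha_i)$ solvable for each $i$. If $q=\lambda$ then $\alpha=0$ and we take $\ell=1$, $\alpha_1=\frac{n-2}2$, $x_1=\lambda$; otherwise take $\ell=3$ with $(\alpha_1,\alpha_2,\alpha_3)=(\frac{n-2}2,\alpha,0)$ and $(x_1,x_2,x_3)=(q,1,\lambda-q-1)$. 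The three hypothesized Hamilton-Waterloo solutions for $K_n$ cover exactly these three sub-instances, so Theorem~\ref{the:layers-HWP}(ii) yields a 2-factorization of $\lambda K_n$ of type $[T_1^{\la\sum x_i\alpha_i+\bar\alpha\ra},T_2^{\la\sum x_i\gamma_i\ra}]=[T_1^{\la\alpha'\ra},T_2^{\la\lfloor\lambda\frac{n-1}2\rfloor-\alpha'\ra}]$, which is the desired solution. Part (ii) is then immediate: if $HWP(K_n;T_1,T_2;\alpha,\frac{n-2}2-\alpha)$ holds for every $\alpha$, then for any target $\alpha'$ in the stated range we pick $\alpha\equiv\alpha'-\lfloor\frac\lambda2\rfloor\pmod{\frac{n-2}2}$ with $0\le\alpha\le\frac{n-2}2$ and invoke (i).

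The only genuinely delicate point is the bookkeeping: verifying that $q=\big\lfloor(\alpha'-\lfloor\frac\lambda2\rfloor)/\frac{n-2}2\big\rfloor$ indeed satisfies $0\le q\le\lambda$ across the full stated range of $\alpha'$, and that when $q=\lambda$ one necessarily has $\alpha=0$ so that the degenerate case $\ell=1$ is available; this mirrors the corresponding step in Corollary~\ref{cor:layers-HWP-odd} and amounts to the arithmetic identity $\lfloor\lambda\frac{n-1}2\rfloor=\lambda\frac{n-2}2+\lfloor\frac\lambda2\rfloor$ for $n$ even. Everything else is a direct transcription of the odd case with $\frac{n-1}2$ replaced by $\frac{n-2}2$ and the extra $\lfloor\frac\lambda2\rfloor$ bipartite copies of $T_1$ absorbed via part (ii) of Theorem~\ref{the:layers-HWP}.
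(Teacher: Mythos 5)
Your proposal is correct and follows essentially the same route as the paper's proof: apply Theorem~\ref{the:layers-HWP}(ii) with all $\mu_i=1$ (so $\beta=\lambda$, $\bar\gamma=0$, $\bar\alpha=\lfloor\frac{\lambda}{2}\rfloor$), write $\alpha'-\lfloor\frac{\lambda}{2}\rfloor=q\frac{n-2}{2}+\alpha$ with $0\le q\le\lambda$, and use the same choice of $\ell$, $\alpha_i$, $x_i$ in both the $q=\lambda$ and $q<\lambda$ cases, with (ii) following immediately from (i).
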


\begin{proof}
\begin{enumerate}[(i)]
\item Take any $\alpha' \in \{ \lfloor \frac{\lambda }{2} \rfloor,\ldots,\lfloor \lambda \frac{n-1}{2} \rfloor \}$ such that $\alpha' -\lfloor \frac{\lambda }{2} \rfloor \equiv \alpha \pmod{\frac{n-2}{2}}$.
We apply Theorem~\ref{the:layers-HWP}(ii) with all $\mu_i=1$. Since $n$ is even, we'll have ${\cal O}=\{ 1,\ldots,\ell\}$ and $\beta=\lambda$, and since $T_2$ is not bipartite, we'll have $\bar{\alpha}=\lfloor \frac{\lambda }{2} \rfloor$ and $\bar{\gamma}=0$.
 It suffices to find non-negative integers $\alpha_1,\ldots,\alpha_{\ell}$ and $x_1,\ldots,x_{\ell}$ such that $\lambda=\sum_{i=1}^{\ell} x_i$, $\alpha'- \lfloor \frac{\lambda}{2} \rfloor= \sum_{i=1}^{\ell} x_i\alpha_i$, and $HWP(K_n;T_1,T_2;\alpha_i,\frac{n-2}{2}-\alpha_i)$ has a solution for all $i$.

Write $\alpha'-\lfloor \frac{\lambda }{2} \rfloor=q \frac{n-2}{2}+\alpha$. Observe that, since $\lfloor \lambda \frac{n-1}{2} \rfloor-\lfloor \frac{\lambda }{2} \rfloor=\lambda \frac{n-2}{2}$, we have $0 \le q \le \lambda$. Moreover,
if $q=\lambda$, then $\alpha=0$. In this case, we let $\ell=1$, $\alpha_1=\frac{n-2}{2}$, $x_1=q$.

Hence we may assume $q < \lambda$. Now let $\ell=3$, $\alpha_1=\frac{n-2}{2}$, $\alpha_2=\alpha$, $\alpha_3=0$, $x_1=q$, $x_2=1$, and $x_3=\lambda-q-1$.

\item This follows immediately from (i).
\end{enumerate}
\end{proof}

Many solutions to the Hamilton-Waterloo Problem for $K_n$ are known for very specific uniform 2-factor types (see \cite{MR2246267}), hence we shall limit the application of our techniques to some of the most comprehensive results.

For odd $n \le 17$ it has been shown \cite{MR2239309,MR2246267} that $HWP(K_n;T_1,T_2;\alpha,\gamma)$ has a solution for all admissible 2-factor types $T_1$ and $T_2$, and all non-negative integers $\alpha$ and $\gamma$ satisfying $\alpha+\gamma=\frac{n-1}{2}$ except in the following cases, which are known to have no solution:
\begin{description}
\item{(C7)} $n=7$, $T_1=[3,4]$, $T_2=[7]$, $\alpha=2$;
\item{(C9a)} $n=9$, $T_1=[4,5]$, $\alpha=4$;
\item{(C9b)} $n=9$, $T_1=[4,5]$, $T_2=[3^{\la 3 \ra}]$, $\alpha \in \{ 1,2 \}$;
\item{(C9c)} $n=9$, $T_1=[3^{\la 3 \ra}]$, $T_2 \in \{ [3,6],[9] \}$, $\alpha=3$;
\item{(C11)} $n=11$, $T_1=[3,3,5]$, $\alpha=5$;
\item{(C15)} $n=15$, $T_1=[3^{\la 5 \ra}]$, $T_2 \in \{ [3^{\la 2 \ra},4,5],[3,5,7],[5^{\la 3 \ra}], [4^{\la 2 \ra},7],[7,8] \}$, $\alpha=6$.
\end{description}

Our layering technique yields the following corollary.

\begin{cor}\label{cor:HWP-small}
For odd $n \le 17$ and $\lambda \ge 2$, $HWP(\lambda K_n;T_1,T_2;\alpha',\gamma')$ has a solution for all admissible 2-factor types $T_1$ and $T_2$ for $K_n$, and all non-negative integers $\alpha'$ and $\gamma'$ satisfying $\alpha'+\gamma'=\lambda \frac{n-1}{2}$ except possibly in the following cases:
\begin{description}
\item{(D7)} $n=7$, $T_1=[3,4]$, $T_2=[7]$, $\alpha' = 3\lambda-1$;
\item{(D9a)} $n=9$, $T_1=[4,5]$, $\alpha' > 3\lambda$;
\item{(D9b)} $n=9$, $T_1=[4,5]$, $T_2=[3^{\la 3 \ra}]$,  $\alpha' \not\equiv 0 \pmod{3}$;
\item{(D9c)} $n=9$, $T_1=[3^{\la 3 \ra}]$, $T_2 \in \{ [3,6],[9] \}$, $\alpha'=4\lambda-1$;
\item{(D11)} $n=11$, $T_1=[3,3,5]$, $\alpha'> 4\lambda$;
\item{(D15)} $n=15$, $T_1=[3^{\la 5 \ra}]$, $T_2 \in \{ [3^{\la 2 \ra},4,5],[3,5,7],[5^{\la 3 \ra}], [4^{\la 2 \ra},7],[7,8] \}$, $\alpha'=7\lambda-1$.
\end{description}
\end{cor}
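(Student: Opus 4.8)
The plan is to reduce everything to Theorem~\ref{the:layers-HWP}(i), applied with all ingredient multiplicities $\mu_i=1$ and with the known $2$-factorizations of $K_n$ recorded in cases (C7)--(C15) as ingredients. Fix an odd $n\le 17$, admissible $T_1,T_2$, and $\lambda\ge 2$; put $h=\frac{n-1}{2}$ and let $A=A(n,T_1,T_2)\subseteq\{0,\ldots,h\}$ be the set of $\alpha$ for which $HWP(K_n;T_1,T_2;\alpha,h-\alpha)$ has a solution, so that $A=\{0,\ldots,h\}\setminus B$ for a small ``bad set'' $B$ read off from (C7)--(C15). One must be careful that (C9a) and (C11) are stated without reference to $T_2$: for the pair $(T_1,T_2)$ they delete $\alpha=h$ whenever $T_1=[4,5]$ (for $n=9$) or $T_1=[3,3,5]$ (for $n=11$), and, via the $(T_1,\alpha)\leftrightarrow(T_2,h-\alpha)$ symmetry of $HWP$, they delete $\alpha=0$ whenever $T_2$ is one of those types; also, for $n=9$, $T_1=[4,5]$, $T_2=[3^{\la 3 \ra}]$ both (C9a) and (C9b) apply. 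Since $n$ is odd, every $\mu_i(n-1)$ is even, so in Theorem~\ref{the:layers-HWP} we automatically have $\mathcal{O}=\emptyset$ and $\beta=0$; part~(i) then applies with no further hypothesis and yields: $HWP(\lambda K_n;T_1,T_2;\alpha',\lambda h-\alpha')$ has a solution whenever $\alpha'$ is a sum of exactly $\lambda$ elements of $A$ (repetitions allowed). The complementary count $\gamma'=\lambda h-\alpha'$ comes for free, since $\alpha_i+\gamma_i=h$ for every ingredient and $\sum_i x_i=\lambda$. When $B=\emptyset$ one may instead quote Corollary~\ref{cor:layers-HWP-odd}(ii) directly; when $T_1=T_2$ the problem degenerates to $OP(\lambda K_n;T_1)$ and is handled by the cited Oberwolfach results.

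It remains to compute, for each pair, which integers in $\{0,\ldots,\lambda h\}$ are sums of exactly $\lambda$ elements of $A$, and to check that the complement is precisely the set of exceptions listed in (D7)--(D15). The analysis is driven by the shape of $B$. If $B=\emptyset$, then $1\in A$ and every $\alpha'\in\{0,\ldots,\lambda h\}$ is attainable: no exception. If $B=\{h\}$ (e.g.\ $T_1=[4,5]$ with $T_2\ne[3^{\la 3 \ra}]$, giving (D9a); or $T_1=[3,3,5]$, giving (D11)), then $A=\{0,\ldots,h-1\}$, the attainable set is $\{0,\ldots,(h-1)\lambda\}$, and the exceptions are exactly $\alpha'>(h-1)\lambda$, i.e.\ $\alpha'>3\lambda$ when $n=9$ and $\alpha'>4\lambda$ when $n=11$. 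If $B=\{h-1\}$ (case (D7) with $h=3$, case (D9c) with $h=4$, case (D15) with $h=7$), then $A=\{0,\ldots,h-2\}\cup\{h\}$; since any summand below $h$ is at most $h-2$, a non-maximal summand lowers the total by at least $2$, so $\lambda h$ itself and every target $\le\lambda h-2$ are attainable (use copies of $h$ plus, when a remainder must be absorbed, one $h-2$ and one $1$), whereas $\lambda h-1$ is not — giving exactly $\alpha'=3\lambda-1,\ 4\lambda-1,\ 7\lambda-1$. Finally, for $n=9$, $T_1=[4,5]$, $T_2=[3^{\la 3 \ra}]$ we have $B=\{1,2,4\}$, $A=\{0,3\}$, the attainable set is $\{0,3,6,\ldots,3\lambda\}$, and its complement in $\{0,\ldots,4\lambda\}$ is exactly $\{\alpha':\alpha'\not\equiv 0\pmod 3\}\cup\{\alpha':\alpha'>3\lambda\}$, i.e.\ the union of (D9b) and (D9a). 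The pairs with $0\in B$ (namely $T_2\in\{[4,5],[3,3,5]\}$) are the mirror images of the $h\in B$ pairs under the $HWP$-symmetry and are covered by the same lines with $\alpha'$ and $\gamma'$ interchanged.

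The main obstacle is not any conceptual step but the case bookkeeping: (a) correctly assembling $B(n,T_1,T_2)$ from (C7)--(C15) — in particular recognizing that the $T_2$-free lines (C9a), (C11) must be intersected with the $T_2$-specific ones and that, through the symmetry of $HWP$, they also remove the value $0$ when $T_2\in\{[4,5],[3,3,5]\}$ — and (b) carrying out, for each of the few tiny $B$'s that occur, the ``sum of exactly $\lambda$ elements of $A$'' computation. Both are routine once $\lambda\ge 2$; the only delicate point is the handful of targets within $h$ of $\lambda h$, where whether $h$ and $h-1$ lie in $A$ decides exactly which of them drop out. No detachment, and no ingredients with $\mu_i\ge 2$, are needed.
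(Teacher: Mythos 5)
Your proposal is correct and follows essentially the same route as the paper: both reduce to Theorem~\ref{the:layers-HWP}(i) with all $\mu_i=1$ (so $\beta=0$ since $n$ is odd) and then verify that every non-exceptional $\alpha'$ is a sum of exactly $\lambda$ values $\alpha$ for which $HWP(K_n;T_1,T_2;\alpha,\frac{n-1}{2}-\alpha)$ is solvable. Your systematic "sum of $\lambda$ elements of $A$" bookkeeping, organized by the shape of the bad set $B$, reproduces exactly the explicit decompositions the paper writes out case by case (e.g.\ $q$ copies of $h$ plus one $h-2$ and one $1$ for the $B=\{h-1\}$ cases), including the symmetry handling when the exceptional type occurs as $T_2$.
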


\begin{proof}
Let $n \le 17$ be an odd integer,  $T_1$ and $T_2$ admissible 2-factor types for $K_n$, and $\alpha' \in \{ 0,\ldots,\lambda \frac{n-1}{2} \}$. Write $\alpha'=q \frac{n-1}{2}+\alpha$ for $0 \le q \le \lambda$ and $0 \le \alpha < \frac{n-1}{2}$.

{\sc Case 1:} $n \not\in \{ 9,11 \}$. Then $HWP(K_n;T_1,T_2;\frac{n-1}{2},0)$ and
$HWP(K_n;T_1,T_2;0,\frac{n-1}{2})$ both have solutions. If $n$, $T_1$, $T_2$, and $\alpha$ do not fall into Case (C7) or (C15) above, then $HWP(K_n;T_1,T_2;\alpha,\frac{n-1}{2}-\alpha)$ has a solution, and hence $HWP(\lambda K_n;T_1,T_2;\alpha',\lambda \frac{n-1}{2}-\alpha')$ has a solution by Corollary~\ref{cor:layers-HWP}(i). It remains to examine Cases (C7) and (C15). As in the proof of Corollary~\ref{cor:layers-HWP}(i), it suffices to find non-negative integers $\alpha_1,\ldots,\alpha_{\ell}$ and $x_1,\ldots,x_{\ell}$ such that $\lambda=\sum_{i=1}^{\ell} x_i$, $\alpha'= \sum_{i=1}^{\ell} x_i\alpha_i$, and $HWP(K_n;T_1,T_2;\alpha_i,\frac{n-1}{2}-\alpha_i)$ has a solution for all $i$.

(C7) Assume $n=7$, $T_1=[3,4]$, $T_2=[7]$, and $\alpha=2$. If $\alpha' \ne 3\lambda-1$, we may write $\alpha'=3q+2=q \cdot 3+2 \cdot 1 $ and take $\alpha_1=3$, $\alpha_2=1$, $\alpha_3=0$, $x_1=q$, $x_2=2$, $x_3=\lambda-(q+2)$.

(C15) Assume $n=15$, $T_1=[3^{\la 5 \ra}]$, $T_2 \in \{ [3^{\la 2 \ra},4,5],[3,5,7],[5^{\la 3 \ra}], [4^{\la 2 \ra},7],[7,8] \}$, and $\alpha=6$. If $\alpha' \ne 7\lambda-1$, we may write $\alpha'=7q+6=q \cdot 7+2 \cdot 3$ and take $\alpha_1=7$, $\alpha_2=3$, $\alpha_3=0$, $x_1=q$, $x_2=2$, $x_3=\lambda-(q+2)$.

{\sc Case 2:} $n=9$. If $T_1, T_2 \not\in \{ [4,5], [3^{\la 3 \ra}] \}$, then $HWP(K_n;T_1,T_2;\frac{n-1}{2},0)$,
$HWP(K_n;T_1,T_2;$ $0,\frac{n-1}{2})$, and
$HWP(K_n;T_1,T_2;\alpha,\frac{n-1}{2}-\alpha)$ all have solutions, and hence by Corollary~\ref{cor:layers-HWP}(i), $HWP(\lambda K_n;T_1,T_2;\alpha',\lambda\frac{n-1}{2}-\alpha')$ has a solution. It remains to consider cases with $T_1=[4,5]$ and $T_1=[3^{\la 3 \ra}]$.

Assume $T_1=[4,5]$. If $T_2 \ne [3^{\la 3 \ra}]$ and $\alpha' \le 3\lambda$, then we can write $\alpha'=\sum_{i=1}^{\lambda} \alpha_i$ with $0 \le \alpha_i \le 3$ for all $i$. We let $\ell=\lambda$ and $x_1=\ldots=x_{\ell}=1$. If $T_2 = [3^{\la 3 \ra}]$, $\alpha' \le 3\lambda$, and $\alpha' \equiv 0 \pmod{3}$, then we can write $\alpha'=q \cdot 3$ for $q \le \lambda$. We then let $\ell=2$, $\alpha_1=3$, $\alpha_2=0$, $x_1=q$, $x_2=\lambda-q$.

Assume $T_1=[3^{\la 3 \ra}]$ and, without loss of generality, $T_2 \in \{ [3,6],[9] \}$. If $\alpha \ne 3$, then as above,
$HWP(\lambda K_n;T_1,T_2;\alpha',\lambda \frac{n-1}{2}-\alpha')$ by Corollary~\ref{cor:layers-HWP}(i). Hence suppose
$\alpha=3$. If $\alpha' \ne 4\lambda-1$, we may write $\alpha'=4q+3=q \cdot 4+2+1$ and take $\alpha_1=4$, $\alpha_2=2$, $\alpha_3=1$, $\alpha_4=0$, $x_1=q$, $x_2=1$, $x_3=1$, $x_4=\lambda-(q+2)$.

{\sc Case 3:} $n=11$. If $T_1, T_2 \ne [3,3,5]$, then $HWP(K_n;T_1,T_2;\frac{n-1}{2},0)$,
$HWP(K_n;T_1,T_2;$ $0,\frac{n-1}{2})$, and
$HWP(K_n;T_1,T_2;\alpha,\frac{n-1}{2}-\alpha)$ all have solutions, and hence by Corollary~\ref{cor:layers-HWP}(i), $HWP(\lambda K_n;T_1,T_2;\alpha',\lambda\frac{n-1}{2}-\alpha')$ has a solution. It remains to consider the case $T_1=[3,3,5]$. If $\alpha' \le 4\lambda$,  we write $\alpha'=\sum_{i=1}^{\lambda} \alpha_i$ with $0 \le \alpha_i \le 4$ for all $i$. We then let $\ell=\lambda$ and $x_1=\ldots=x_{\ell}=1$.
\end{proof}

\label{HWP-even-n}

For even $n \le 10$ it has been shown \cite{MR2239309,MR2246267} that $HWP(K_n;T_1,T_2;\alpha,\gamma)$ has a solution for all admissible 2-factor types $T_1$ and $T_2$, and all non-negative integers $\alpha$ and $\gamma$ satisfying $\alpha+\gamma=\frac{n-2}{2}$ except in the following cases, which are known to have no solution:
\begin{description}
\item{(C6)} $n=6$, $T_1=[6]$, $T_2=[3,3]$, $\alpha=0$;
\item{(C8)} $n=8$, $T_1=[4^{\la 2 \ra}]$, $T_2=[3,5]$,  $\alpha \in \{ 1,2 \}$.
\end{description}

Our layering technique, however, applies only to cases where at least one of the 2-factor types is bipartite, and since the case with two bipartite 2-factor types will be fully solved in Corollary~\ref{cor:bipartite-Kn-HWP}, we shall limit the next result to the case with a single bipartite 2-factor type.

\begin{cor}\label{cor:HWP-small-even}
For even $n \le 10$ and $\lambda \ge 2$, $HWP(\lambda K_n;T_1,T_2;\alpha',\gamma')$ has a solution for all admissible 2-factor types $T_1$ and $T_2$ for $K_n$, where $T_1$ is bipartite and $T_2$ is not, and all non-negative integers $\alpha'$ and $\gamma'$ satisfying $\alpha'+\gamma'= \lfloor \lambda \frac{n-1}{2} \rfloor$ and $\alpha' \ge \lfloor  \frac{\lambda}{2} \rfloor$ except possibly in the following cases:
\begin{description}
\item{(D6)} $n=6$, $T_1=[6]$, $T_2=[3,3]$, $\alpha'<\lambda + \lfloor  \frac{\lambda}{2} \rfloor$;
\item{(D8)} $n=8$, $T_1=[4^{\la 2 \ra}]$, $T_2=[3,5]$,  $\alpha' \not\equiv \lfloor  \frac{\lambda}{2} \rfloor \pmod{3}$.
\end{description}
\end{cor}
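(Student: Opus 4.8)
The plan is to reduce everything to \textbf{Corollary~\ref{cor:layers-HWP-even}} applied with all $\mu_i=1$, exactly as Corollary~\ref{cor:HWP-small} reduced to Corollary~\ref{cor:layers-HWP-odd} in the odd case. Fix an even $n\le 10$, admissible 2-factor types $T_1$ (bipartite) and $T_2$ (non-bipartite) for $K_n$, and $\lambda\ge 2$. Take any $\alpha'$ with $\alpha'+\gamma'=\lfloor\lambda\frac{n-1}{2}\rfloor$ and $\alpha'\ge\lfloor\frac{\lambda}{2}\rfloor$, and write $\alpha'-\lfloor\frac{\lambda}{2}\rfloor=q\frac{n-2}{2}+\alpha$ with $0\le q\le\lambda$ and $0\le\alpha<\frac{n-2}{2}$ (the bound $q\le\lambda$ holds because $\lfloor\lambda\frac{n-1}{2}\rfloor-\lfloor\frac{\lambda}{2}\rfloor=\lambda\frac{n-2}{2}$). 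The goal is to exhibit, for each such $\alpha'$ outside the listed exceptions, nonnegative integers $\alpha_1,\dots,\alpha_\ell$ and $x_1,\dots,x_\ell$ with $\lambda=\sum x_i$, $\alpha'-\lfloor\frac{\lambda}{2}\rfloor=\sum x_i\alpha_i$, and $HWP(K_n;T_1,T_2;\alpha_i,\frac{n-2}{2}-\alpha_i)$ solvable for every $i$; then Theorem~\ref{the:layers-HWP}(ii) (equivalently Corollary~\ref{cor:layers-HWP-even}) finishes the job.

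The key step is a case analysis by the base graph data exactly mirroring the proof of Corollary~\ref{cor:HWP-small}. First, the \textbf{generic case}: if $(n,T_1,T_2,\alpha)$ avoids both (C6) and (C8), then $HWP(K_n;T_1,T_2;\frac{n-2}{2},0)$, $HWP(K_n;T_1,T_2;0,\frac{n-2}{2})$, and $HWP(K_n;T_1,T_2;\alpha,\frac{n-2}{2}-\alpha)$ all have solutions by the cited small-order results, so Corollary~\ref{cor:layers-HWP-even}(i) applies directly with the three-term choice $\ell=3$, $(\alpha_1,\alpha_2,\alpha_3)=(\frac{n-2}{2},\alpha,0)$, $(x_1,x_2,x_3)=(q,1,\lambda-q-1)$ when $q<\lambda$, and $\ell=1$, $\alpha_1=\frac{n-2}{2}$, $x_1=\lambda$ when $q=\lambda$ (forcing $\alpha=0$). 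Then the two \textbf{special cases}. For (C6), $n=6$, $T_1=[6]$, $T_2=[3,3]$: the only unsolvable base instance is $\alpha=0$, and $\frac{n-2}{2}=2$, so provided $\alpha'-\lfloor\frac{\lambda}{2}\rfloor\ne 2\lambda$ we can write $\alpha'-\lfloor\frac{\lambda}{2}\rfloor=2q+1\cdot?$ — more carefully, since the only forbidden base value is $0$, we split $\alpha'-\lfloor\frac{\lambda}{2}\rfloor$ into a sum of $\lambda$ parts each in $\{1,2\}$, which is possible precisely when $\lambda\le\alpha'-\lfloor\frac{\lambda}{2}\rfloor\le 2\lambda$, i.e. when $\alpha'\ge\lambda+\lfloor\frac{\lambda}{2}\rfloor$; this is exactly the complement of exception (D6). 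For (C8), $n=8$, $T_1=[4^{\la 2\ra}]$, $T_2=[3,5]$: here $\frac{n-2}{2}=3$ and the unsolvable base values are $\alpha\in\{1,2\}$, so only $\alpha_i\in\{0,3\}$ are usable, forcing $\sum x_i\alpha_i\equiv 0\pmod 3$ among a total of $\lambda$ classes; thus we succeed whenever $\alpha'-\lfloor\frac{\lambda}{2}\rfloor\equiv 0\pmod 3$ (take $\alpha_1=3$, $x_1=q'$, $\alpha_2=0$, $x_2=\lambda-q'$ with $q'=\frac{1}{3}(\alpha'-\lfloor\frac{\lambda}{2}\rfloor)\le\lambda$, the bound again from $\alpha'\le\lfloor\lambda\frac{n-1}{2}\rfloor$), which is the complement of exception (D8).

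The main obstacle, and the only place requiring genuine care, is the \textbf{arithmetic bookkeeping of the shift by $\lfloor\frac{\lambda}{2}\rfloor$} coming from the forced bipartite 1-factor contributions in the even-$n$ layering. In Corollary~\ref{cor:layers-HWP-even} the $\bar\alpha=\lfloor\frac{\lambda}{2}\rfloor$ "free" bipartite 2-factors are \emph{added on top of} $\sum x_i\alpha_i$, so all the modular conditions and range restrictions must be stated for $\alpha'-\lfloor\frac{\lambda}{2}\rfloor$ rather than for $\alpha'$ itself; verifying that the resulting exception sets in the statement — $\alpha'<\lambda+\lfloor\frac{\lambda}{2}\rfloor$ for (D6) and $\alpha'\not\equiv\lfloor\frac{\lambda}{2}\rfloor\pmod 3$ for (D8) — are exactly the cases not covered is the crux. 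I would double-check that (D8)'s condition $\alpha'\not\equiv\lfloor\frac{\lambda}{2}\rfloor\pmod 3$ is equivalent to $\alpha'-\lfloor\frac{\lambda}{2}\rfloor\not\equiv 0\pmod 3$ (it is, trivially), and that in (D6) the partition of $\alpha'-\lfloor\frac{\lambda}{2}\rfloor$ into $\lambda$ parts from $\{1,2\}$ exists iff $\alpha'\ge\lambda+\lfloor\frac{\lambda}{2}\rfloor$ (the upper bound $2\lambda$ being automatic from $\alpha'\le\lfloor\lambda\frac{n-1}{2}\rfloor=2\lambda+\lfloor\frac{\lambda}{2}\rfloor$ when $n=6$). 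Everything else is a direct transcription of the generic argument with $\frac{n-1}{2}$ replaced by $\frac{n-2}{2}$ and the $\lfloor\frac{\lambda}{2}\rfloor$ offset carried along.
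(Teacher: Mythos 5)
Your proposal is correct and follows essentially the same route as the paper: reduce to Corollary~\ref{cor:layers-HWP-even} with all $\mu_i=1$, carry the shift $\alpha'-\lfloor\frac{\lambda}{2}\rfloor$ throughout, dispose of $n=8$ via the mod-$3$ obstruction coming from (C8), and dispose of $n=6$ by writing $\alpha'-\lfloor\frac{\lambda}{2}\rfloor$ as a sum of $\lambda$ parts from $\{1,2\}$, which is possible exactly outside exception (D6). The only (cosmetic) difference is organizational: since $HWP(K_6;[6],[3,3];0,2)$ has no solution, your ``generic case'' cannot actually be invoked for $n=6$ with $\alpha\ne 0$ (the base instance with $\alpha_3=0$ fails there), but your (C6) branch as written already covers all of $n=6$ regardless of $\alpha$, which is precisely how the paper arranges its two cases ($n\ne 6$ versus $n=6$).
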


\begin{proof}
Let $n \le 10$ be an even integer,  $T_1$ and $T_2$ admissible 2-factor types for $K_n$ with $T_1$ bipartite and $T_2$ not, and $\alpha' \in \{ \lfloor  \frac{\lambda}{2} \rfloor,\ldots,\lfloor \lambda \frac{n-1}{2} \rfloor \}$. Write $\alpha'-\lfloor  \frac{\lambda}{2} \rfloor
=q \frac{n-2}{2}+\alpha$ for $0 \le q \le \lambda$ and $0 \le \alpha < \frac{n-2}{2}$.

{\sc Case 1:} $n \ne 6$. Then $HWP(K_n;T_1,T_2;\frac{n-2}{2},0)$ and
$HWP(K_n;T_1,T_2;0,\frac{n-2}{2})$ both have solutions. If $n$, $T_1$, $T_2$, and $\alpha$ do not fall into Case (C8) above, then $HWP(K_n;T_1,T_2;\alpha,\frac{n-2}{2}-\alpha)$ has a solution, and hence $HWP(\lambda K_n;T_1,T_2;\alpha',\lfloor \lambda \frac{n-1}{2} \rfloor -\alpha')$ has a solution by Corollary~\ref{cor:layers-HWP-even}(i). It remains to examine Case (C8). Here, $n=8$, $T_1=[4^{\la 2 \ra}]$, $T_2=[3,5]$, and $\alpha \in \{ 1,2 \}$, but if $\alpha'-\lfloor  \frac{\lambda}{2} \rfloor \equiv 0 \pmod{3}$, then we have a contradiction.

{\sc Case 2:} $n=6$. Then $T_1=[6]$, $T_2=[3,3]$, and  $\alpha'-\lfloor  \frac{\lambda}{2} \rfloor = 2q + \alpha$ for $0 \le q \le \lambda$ and  $\alpha \in \{ 0,1 \}$.
As in the proof of Corollary~\ref{cor:layers-HWP-even}, it suffices to find non-negative integers $\alpha_1,\ldots,\alpha_{\ell}$ and $x_1,\ldots,x_{\ell}$ such that $\lambda=\sum_{i=1}^{\ell} x_i$, $\alpha'-\lfloor  \frac{\lambda}{2} \rfloor= \sum_{i=1}^{\ell} x_i\alpha_i$, and $HWP(K_n;T_1,T_2;\alpha_i,\frac{n-2}{2}-\alpha_i)$ has a solution for all $i$.
If $\alpha'-\lfloor  \frac{\lambda}{2} \rfloor \ge \lambda$, then we may write $\alpha'-\lfloor  \frac{\lambda}{2} \rfloor=\left( (2q+\alpha)-\lambda \right) \cdot 2 + \left(2\lambda-(2q+\alpha) \right) \cdot 1$ and take $\alpha_1=2$, $\alpha_2=1$, $x_1=(2q+\alpha)-\lambda$, $x_2=2\lambda-(2q+\alpha)$.
\end{proof}

Observe that the same results are obtained from Corollary~\ref{cor:layers-small-even-n} for $y=0$ and only two 2-factor types, of which exactly one is bipartite.

Combining results on the Oberwolfach Problem for uniform 2-factors \cite{MR1008157}
with Theorem 7.1 in \cite{MR3745157}, we can see that for odd integers $n$, $s$, $t$ such that $3 \le s \le t$, $s$ and $t$ divide $n$, and $n \ne \frac{st}{\gcd(s,t)}$,  $HWP(K_n;[s^\ast],[t^\ast];\alpha,\gamma)$ has a solution for all non-negative integers $\alpha$ and $\gamma$ such that $\alpha+\gamma=\frac{n-1}{2}$ and $\alpha \not\in \{ 1,\frac{n-3}{2},\frac{n-7}{2} \}$. Our layering technique yields the following corollary.

\begin{cor}\label{cor:HWP-odd}
Let $\lambda \ge 2$, and let $n \ge 11$ be odd. Furthermore, let $s$ and $t$ be odd integers such that $3 \le s \le t$, $s$ and $t$ divide $n$, and $n \ne \frac{st}{\gcd(s,t)}$. Then  $HWP(\lambda K_n;[s^\ast],[t^\ast];\alpha',\gamma')$ has a solution for all non-negative integers $\alpha'$ and $\gamma'$ such that $\alpha'+\gamma'=\lambda \frac{n-1}{2}$ and  $\alpha' \not\in \{1,\lambda\frac{n-1}{2}-3,\lambda\frac{n-1}{2}-1 \}$.
\end{cor}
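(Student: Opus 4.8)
The plan is to deduce Corollary~\ref{cor:HWP-odd} from Corollary~\ref{cor:layers-HWP-odd}, using the cited existence results for the Hamilton-Waterloo Problem for $K_n$ with uniform 2-factor types as the ``seed'' 2-factorizations. Since $n$ is odd, we are squarely in the setting of Corollary~\ref{cor:layers-HWP-odd} (here ${\cal O}=\emptyset$ and $\beta=0$), so the task reduces entirely to showing that the hypothesized $HWP(K_n;[s^\ast],[t^\ast];\cdot,\cdot)$ solutions are available for the right values of $\alpha$, and then translating the conclusion of Corollary~\ref{cor:layers-HWP-odd}(i) into the claimed range for $\alpha'$.

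First I would record what the combination of \cite{MR1008157} and Theorem~7.1 of \cite{MR3745157} gives us: under the stated arithmetic conditions on $n,s,t$ (namely $3\le s\le t$ both odd, $s,t\mid n$, and $n\ne \frac{st}{\gcd(s,t)}$), $HWP(K_n;[s^\ast],[t^\ast];\alpha,\gamma)$ has a solution whenever $\alpha+\gamma=\frac{n-1}{2}$ and $\alpha\notin\{1,\frac{n-3}{2},\frac{n-7}{2}\}$. In particular, taking $\alpha=0$ and $\alpha=\frac{n-1}{2}$ (both permissible since $n\ge 11$ forces these to avoid the three bad values — one checks $0,\frac{n-1}{2}\notin\{1,\frac{n-3}{2},\frac{n-7}{2}\}$ for $n\ge 11$), we get both $HWP(K_n;[s^\ast],[t^\ast];\frac{n-1}{2},0)$ and $HWP(K_n;[s^\ast],[t^\ast];0,\frac{n-1}{2})$. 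So the first two hypotheses of Corollary~\ref{cor:layers-HWP-odd}(i) hold unconditionally in our setting.

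Next I would run the following covering argument. Given a target $\alpha'\in\{0,\ldots,\lambda\frac{n-1}{2}\}$ with $\alpha'\notin\{1,\lambda\frac{n-1}{2}-3,\lambda\frac{n-1}{2}-1\}$, write $\alpha'=q\frac{n-1}{2}+\alpha$ with $0\le q\le\lambda$ and $0\le\alpha<\frac{n-1}{2}$. If $\alpha\notin\{1,\frac{n-3}{2},\frac{n-7}{2}\}$, then $HWP(K_n;[s^\ast],[t^\ast];\alpha,\frac{n-1}{2}-\alpha)$ has a solution and Corollary~\ref{cor:layers-HWP-odd}(i) applies directly (this is exactly the ``$\alpha'\equiv\alpha\pmod{\frac{n-1}{2}}$'' conclusion of that corollary, used with $\lambda$). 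If instead $\alpha\in\{1,\frac{n-3}{2},\frac{n-7}{2}\}$, I would mimic the explicit-decomposition technique used in the proof of Corollary~\ref{cor:HWP-small}: choose non-negative integers $\alpha_1,\ldots,\alpha_\ell$ (each avoiding the three bad values, so each $HWP(K_n;[s^\ast],[t^\ast];\alpha_i,\frac{n-1}{2}-\alpha_i)$ has a solution) and multiplicities $x_1,\ldots,x_\ell$ with $\sum x_i=\lambda$ and $\sum x_i\alpha_i=\alpha'$, then invoke Theorem~\ref{the:layers-HWP}(i). For $\alpha\equiv 1$: if $q<\lambda$ one can peel off pieces of sizes $0$ and $\frac{n-1}{2}$ together with a single piece of size $\alpha+\frac{n-1}{2}$ or similar — more precisely, split the ``$1$'' as $\frac{n-3}{2}+\frac{n-7}{2}-(n-5) $ is not integral, so instead use e.g. $1=2+(-1)$ is illegal; the clean choice is to absorb the offending residue by writing $\alpha'$ using summands $2$ and $0$ and $\frac{n-1}{2}$, e.g. $\alpha_1=\frac{n-1}{2},\alpha_2=2,\alpha_3=0$ with $x_1=q-?,x_2=?,x_3=\cdots$, which works precisely when $\alpha'$ is not forced to equal one of the three excluded extremal values; symmetrically for $\alpha\in\{\frac{n-3}{2},\frac{n-7}{2}\}$ one uses the complementary decomposition built from $\frac{n-1}{2}$ and $\frac{n-5}{2}$ (note $\frac{n-5}{2}\notin\{1,\frac{n-3}{2},\frac{n-7}{2}\}$ for $n\ge 11$). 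In each case the three exceptional values of $\alpha'$ in the statement are exactly those for which no such integer solution $(x_i,\alpha_i)$ exists, and the ``$\alpha'\ge 11$ odd'' hypothesis guarantees enough room (the bad residues are distinct and bounded away from $0$ and $\frac{n-1}{2}$).

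The main obstacle is the bookkeeping in the last step: verifying that for $n\ge 11$ the three ``bad'' residues $1,\frac{n-3}{2},\frac{n-7}{2}$ are pairwise distinct and all lie strictly between $0$ and $\frac{n-1}{2}$, identifying auxiliary ``good'' values (such as $0$, $2$, $\frac{n-1}{2}$, and $\frac{n-5}{2}$) that can be combined to hit every target $\alpha'$ outside the three listed exceptions, and confirming that the three listed exceptions $\{1,\lambda\frac{n-1}{2}-3,\lambda\frac{n-1}{2}-1\}$ are precisely the unavoidable ones — i.e. that for every other $\alpha'$ a valid choice of $(\ell;\alpha_1,\ldots,\alpha_\ell;x_1,\ldots,x_\ell)$ exists. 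This is a finite case analysis on the residue $\alpha=\alpha'\bmod\frac{n-1}{2}$ together with the quotient $q$, entirely analogous to Cases (C7) and (C15) in the proof of Corollary~\ref{cor:HWP-small}, so I expect it to be routine but slightly delicate at the boundary values $q\in\{0,1,\lambda-1,\lambda\}$.
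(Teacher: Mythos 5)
Your overall strategy is the paper's: reduce to finding non-negative integers $\alpha_1,\ldots,\alpha_\ell$ and $x_1,\ldots,x_\ell$ with $\sum_i x_i=\lambda$, $\sum_i x_i\alpha_i=\alpha'$, and each $\alpha_i\notin\{1,\frac{n-3}{2},\frac{n-7}{2}\}$, then invoke Theorem~\ref{the:layers-HWP}(i) (equivalently Corollary~\ref{cor:layers-HWP-odd}) with the seed solutions from \cite{MR1008157} and \cite{MR3745157}. The case $\alpha:=\alpha'\bmod\frac{n-1}{2}\notin\{1,\frac{n-3}{2},\frac{n-7}{2}\}$ is handled exactly as in the paper.

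The gap is in the three bad residue classes, which is where all the content of the corollary (and its three excluded values of $\alpha'$) lives: you never actually exhibit the decompositions. The one concrete suggestion you make for $\alpha=1$ --- summands drawn from $\{0,2,\frac{n-1}{2}\}$ --- cannot work when $n\equiv 1\pmod{4}$, since then every summand is even while $\alpha'=q\frac{n-1}{2}+1$ is odd. What the paper actually uses are the identities $q\,\frac{n-1}{2}+1=(q-1)\frac{n-1}{2}+\frac{n-5}{2}+3$ (valid only when $q\ge 1$, which is exactly why $\alpha'=1$ must be excluded), and $\frac{n-3}{2}=\frac{n-9}{2}+3$, $\frac{n-7}{2}=\frac{n-11}{2}+2$ (each consuming two of the $\lambda$ layers beyond the $q$ copies of $\frac{n-1}{2}$, which forces $q\le\lambda-2$ and is exactly why $\alpha'\in\{\lambda\frac{n-1}{2}-3,\lambda\frac{n-1}{2}-1\}$ must be excluded). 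Your proposed ``complementary decomposition built from $\frac{n-1}{2}$ and $\frac{n-5}{2}$'' for $\alpha\in\{\frac{n-3}{2},\frac{n-7}{2}\}$ does not produce these residues with a bounded number of parts: modulo $\frac{n-1}{2}$ you would need $x$ copies of $\frac{n-5}{2}\equiv -2$ to sum to $-1$ or $-3$, which either has no solution or requires $x$ of order $n/4$, exceeding $\lambda$ in general. So the heart of the proof --- choosing good auxiliary summands such as $3$, $2$, $\frac{n-5}{2}$, $\frac{n-9}{2}$, $\frac{n-11}{2}$ and checking that the resulting constraints on $q$ are precisely the three stated exceptions --- is missing rather than routine bookkeeping.
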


\begin{proof}
Take any integer $\alpha'$ such that $0 \le \alpha' \le \lambda \frac{n-1}{2}$ and
$\alpha' \not\in \{1,\lambda\frac{n-1}{2}-3,\lambda\frac{n-1}{2}-1 \}$. It suffices to find non-negative integers $\alpha_1,\ldots,\alpha_{\ell}$ and $x_1,\ldots,x_{\ell}$ such that $\lambda=\sum_{i=1}^{\ell} x_i$, $\alpha'= \sum_{i=1}^{\ell} x_i\alpha_i$, and $HWP(K_n;[s^\ast],[t^\ast];\alpha_i,\frac{n-1}{2}-\alpha_i)$ has a solution for all $i$.

Write $\alpha'=q \frac{n-1}{2}+\alpha$ for $0 \le q \le \lambda$ and $0 \le \alpha < \frac{n-1}{2}$. If $q=\lambda$, then $\alpha=0$. In this case, let $\ell=1$, $\alpha_1=\frac{n-1}{2}$, $x_1=q$. Hence we may assume $q<\lambda$.

If $\alpha \not\in \{ 1, \frac{n-3}{2}, \frac{n-7}{2} \}$, let $\ell=3$, $\alpha_1=\frac{n-1}{2}$, $\alpha_2=\alpha$, $\alpha_3=0$, $x_1=q$, $x_2=1$ $x_3=\lambda-q-1$.

If $\alpha=1$, then $1 \le q < \lambda$. Write $\alpha' = q \frac{n-1}{2}+1=(q-1) \frac{n-1}{2}+ \frac{n-5}{2}+3$, and let $\ell=4$, $\alpha_1=\frac{n-1}{2}$, $\alpha_2=\frac{n-5}{2}$, $\alpha_3=3$, $\alpha_4=0$, $x_1=q-1$, $x_2=x_3=1$, $x_4=\lambda-q-1$.

If $\alpha \in \{\frac{n-3}{2},\frac{n-7}{2}\}$, then $0 \le q \le \lambda-2$. For $\alpha=\frac{n-3}{2}$, write $\alpha' = q \frac{n-1}{2}+\frac{n-3}{2}=q \frac{n-1}{2}+ \frac{n-9}{2}+3$, and let $\ell=4$, $\alpha_1=\frac{n-1}{2}$, $\alpha_2=\frac{n-9}{2}$, $\alpha_3=3$, $\alpha_4=0$, $x_1=q$, $x_2=x_3=1$, $x_4=\lambda-q-2$.
For $\alpha=\frac{n-7}{2}$, write $\alpha' = q \frac{n-1}{2}+\frac{n-7}{2}=q \frac{n-1}{2}+ \frac{n-11}{2}+2$, and let $\ell=4$, $\alpha_1=\frac{n-1}{2}$, $\alpha_2=\frac{n-11}{2}$, $\alpha_3=2$, $\alpha_4=0$, $x_1=q$, $x_2=x_3=1$, $x_4=\lambda-q-2$.
\end{proof}

Theorem 1.5 in \cite{BurDanTra-Arxiv}, together with the results on the Oberwolfach Problem with uniform 2-factors, shows that $HWP(K_n;[t^\ast],[s^\ast];\alpha,\lfloor \frac{n-1}{2} \rfloor -\alpha)$ has a solution for all $\alpha \in \{ 0,\ldots,\lfloor \frac{n-1}{2} \rfloor \}$ provided that $3 \le s < t$, $s|t$, $t|n$, $s$ is odd, $t \not\in \{2s,6s\}$, $n \not\in \{t,2t,4t\}$, $(s,n) \ne (3,6t)$, $\alpha \ne 0$ if $(s,n) \in \{ (3,6),(3,12)\}$, $\alpha \ne 1$, and $\alpha \ne 2$ if $t \equiv s \pmod{4s}$.
Our layering techniques yields the following corollary for $t$ even. Keep in mind that by \cite{MR785659,MR1008157,MR1140806}, $OP(K_n;[m^\ast])$ has a solution if and only if $m|n$ and $(m,n) \not\in \{ (3,6),(3,12) \}$.

\begin{cor}\label{cor:layers-HWP-odd+even}
Let $\lambda \ge 2$, and let $n$, $s$, $t$, and $\alpha'$ be non-negative integers such that
\begin{itemize}
\item $3 \le s < t$, $s|t$, $t|n$, $s$ is odd, and $t$ is even;
\item $t \not\in \{2s,6s\}$, $n \not\in \{t,2t,4t\}$, and $(s,n) \ne (3,6t)$;
\item $\alpha' \in \{ \lfloor \frac{\lambda }{2} \rfloor,  \ldots, \lfloor \lambda \frac{n-1}{2} \rfloor \}$, $\alpha' \ne \lfloor \frac{\lambda }{2} \rfloor+1$, and $\alpha' \ge \lambda + \lfloor \frac{\lambda }{2} \rfloor$ if $(s,n) \in \{(3,6),(3,12)\}$.
\end{itemize}
Then $HWP(\lambda K_n;[t^\ast],[s^\ast];\alpha',\lfloor \lambda \frac{n-1}{2} \rfloor -\alpha')$ has a solution.
\end{cor}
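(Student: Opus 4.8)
The plan is to run the layering machinery of Theorem~\ref{the:layers-HWP}, in the packaged form of Corollary~\ref{cor:layers-HWP-even} (which is tailored to $n$ even with one bipartite 2-factor type), taking all $\mu_i=1$ and feeding in the known solutions to the Hamilton--Waterloo Problem for $K_n$ with $[t^\ast]$ and $[s^\ast]$ factors quoted just before the statement.

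First I would assemble the data over $K_n$. Since $t\mid n$ with $t$ even, $n$ is even, so $\lfloor\frac{n-1}{2}\rfloor=\frac{n-2}{2}$, $T_1=[t^\ast]$ is bipartite and $T_2=[s^\ast]$ is not. Because $s$ is odd and $t$ is even, $s\mid t$ in fact forces $2s\mid t$; with $t\notin\{2s,6s\}$ this gives $t\ge 4s\ge 12$, and then $n\notin\{t,2t,4t\}$ gives $n\ge 3t\ge 36$. In particular $(s,n)\notin\{(3,6),(3,12)\}$, so the displayed restriction ``$\alpha'\ge\lambda+\lfloor\frac{\lambda}{2}\rfloor$ if $(s,n)\in\{(3,6),(3,12)\}$'' is vacuous. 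Moreover $t-s$ is odd, hence not divisible by $4s$, so the clause ``$\alpha\ne 2$ if $t\equiv s\pmod{4s}$'' in the result quoted before the statement never applies here, and the clause excluding $\alpha=0$ in the case $(s,n)\in\{(3,6),(3,12)\}$ is void by the above. Thus $HWP(K_n;[t^\ast],[s^\ast];\alpha,\frac{n-2}{2}-\alpha)$ has a solution for every $\alpha\in\{0,1,\dots,\frac{n-2}{2}\}\setminus\{1\}$ (the endpoints $\alpha=0$ and $\alpha=\frac{n-2}{2}$ being $OP(K_n;[s^\ast])$ and $OP(K_n;[t^\ast])$, which hold by \cite{MR785659,MR1008157,MR1140806}).

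Next I would apply Corollary~\ref{cor:layers-HWP-even} with all $\mu_i=1$. As in its proof, $n$ even gives $\mathcal{O}=\{1,\dots,\ell\}$ and $\beta=\lambda$, and $[s^\ast]$ not bipartite forces the filler parameters $\bar\alpha=\lfloor\frac{\lambda}{2}\rfloor$, $\bar\gamma=0$; so the task reduces to finding non-negative integers $\alpha_1,\dots,\alpha_\ell$ and $x_1,\dots,x_\ell$ with $\sum_i x_i=\lambda$, $\sum_i x_i\alpha_i=M$, and $\alpha_i\in\{0,\dots,\frac{n-2}{2}\}\setminus\{1\}$ for every $i$, where $M:=\alpha'-\lfloor\frac{\lambda}{2}\rfloor$. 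The range $\lfloor\frac{\lambda}{2}\rfloor\le\alpha'\le\lfloor\lambda\frac{n-1}{2}\rfloor$ together with $\lfloor\lambda\frac{n-1}{2}\rfloor-\lfloor\frac{\lambda}{2}\rfloor=\lambda\frac{n-2}{2}$ gives $0\le M\le\lambda\frac{n-2}{2}$, and $\alpha'\ne\lfloor\frac{\lambda}{2}\rfloor+1$ says $M\ne 1$. Now write $M=q\frac{n-2}{2}+r$ with $0\le q\le\lambda$ and $0\le r<\frac{n-2}{2}$, and split into three cases. If $r=0$, take $(\alpha_i,x_i)$ to be $(\frac{n-2}{2},q)$ and $(0,\lambda-q)$. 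If $r\ge 2$, then necessarily $q<\lambda$, and take $(\frac{n-2}{2},q)$, $(r,1)$, $(0,\lambda-q-1)$. If $r=1$, then $M\ne 1$ forces $q\ge 1$, and since $n\ge 36$ we have $\frac{n-2}{2}-1\ge 2$; using $M=(q-1)\frac{n-2}{2}+(\frac{n-2}{2}-1)+2$, take $(\frac{n-2}{2},q-1)$, $(\frac{n-2}{2}-1,1)$, $(2,1)$, $(0,\lambda-q-1)$. In each case $\sum x_i=\lambda$, $\sum x_i\alpha_i=M$, the $x_i$ are non-negative, and every $\alpha_i$ lies in $\{0,\dots,\frac{n-2}{2}\}\setminus\{1\}$, so the $K_n$-solutions above are available and Corollary~\ref{cor:layers-HWP-even} yields the claimed 2-factorization of $\lambda K_n$.

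The one genuinely delicate point I anticipate is the residue case $r=1$: the forbidden value $\alpha=1$ must be replaced by $\alpha=\frac{n-2}{2}-1$, which is admissible only because the hypotheses force $n$ large ($n\ge 3t\ge 36$), and one also needs $q\ge 1$, which is precisely where the hypothesis $\alpha'\ne\lfloor\frac{\lambda}{2}\rfloor+1$ is used. Everything else amounts to routine verification that the listed multisets of $\alpha_i$'s and $x_i$'s have the prescribed sums.
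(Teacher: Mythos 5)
Your proof is correct and follows essentially the same route as the paper's: reduce via the layering machinery (as in the proof of Corollary~\ref{cor:layers-HWP-even}(i)) to finding $\alpha_i, x_i$ with $\sum x_i=\lambda$ and $\sum x_i\alpha_i=\alpha'-\lfloor\frac{\lambda}{2}\rfloor$, split on the residue mod $\frac{n-2}{2}$, and handle the residue-$1$ case by the substitution $\frac{n-2}{2}+1=(\frac{n-2}{2}-1)+2$, exactly as in the paper's Case 2. The only divergence is that you observe the hypotheses force $t\ge 4s$ and $n\ge 3t\ge 36$, so the paper's Case 3 (namely $(s,n)\in\{(3,6),(3,12)\}$) and the clause ``$\alpha\ne 2$ if $t\equiv s\pmod{4s}$'' are vacuous --- a correct and slightly cleaner accounting than the paper's.
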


\begin{proof}
Take any $\alpha'$ satisfying the assumptions. Write $\alpha'-\lfloor \frac{\lambda}{2} \rfloor=q \frac{n-2}{2}+\alpha$, where $0 \le q \le \lambda$ and $0 \le \alpha < \frac{n-2}{2}$.

{\sc Case 1:} $\alpha \ne 1$ and $(s,n) \not\in \{(3,6),(3,12)\}$.  Then $HWP(K_n;[t^\ast],[s^\ast];0,\frac{n-2}{2})$ and $HWP(K_n;[t^\ast],[s^\ast];\frac{n-2}{2},0)$ have solutions by \cite{MR785659,MR1008157,MR1140806}, and $HWP(K_n;[t^\ast],[s^\ast];\alpha,\frac{n-2}{2} -\alpha)$ has a solution by \cite{BurDanTra-Arxiv}. Hence by Corollary~\ref{cor:layers-HWP-even}(i),  $HWP(\lambda K_n;[t^\ast],[s^\ast];\alpha',\lfloor \lambda \frac{n-1}{2} \rfloor -\alpha')$ has a solution.

{\sc Case 2:} $\alpha = 1$ and $(s,n) \not\in \{(3,6),(3,12)\}$. As in the proof of Corollary~\ref{cor:layers-HWP-even}(i),
it suffices to find non-negative integers $\alpha_1,\ldots,\alpha_{\ell}$ and $x_1,\ldots,x_{\ell}$ such that $\lambda=\sum_{i=1}^{\ell} x_i$, $\alpha'-\lfloor  \frac{\lambda}{2} \rfloor= \sum_{i=1}^{\ell} x_i\alpha_i$, and $HWP(K_n;[t^\ast],[s^\ast];\alpha_i,\frac{n-2}{2}-\alpha_i)$ has a solution for all $i$. We have $\alpha'-\lfloor \frac{\lambda}{2} \rfloor=q \frac{n-2}{2}+1=(q-1) \frac{n-2}{2}+\frac{n-4}{2}+2$. Since $\alpha' \ne \lfloor \frac{\lambda}{2} \rfloor+1$, we may then take $\alpha_1=\frac{n-2}{2}$, $\alpha_2=\frac{n-4}{2}$, $\alpha_3=2$, $\alpha_4=0$, $x_1=q-1$, $x_2=x_3=1$, and $x_4=\lambda-(q+1)$.

{\sc Case 3:} $(s,n) \in \{(3,6),(3,12)\}$. Since $\alpha' \ge \lambda + \lfloor \frac{\lambda}{2} \rfloor$, we can write
$\alpha' - \lfloor \frac{\lambda}{2} \rfloor=\sum_{i=1}^{\lambda} \alpha_i$, where $1 \le \alpha_i \le \frac{n-2}{2}$ for all $i$. Hence we may take $\ell=\lambda$ and $x_i=1$ for all $i$.
\end{proof}

A somewhat more restrictive result on $HWP(K_n;T_1,T_2;\alpha,\gamma)$ for uniform 2-factor types $T_1$ and $T_2$, with exactly one of them bipartite, is proved in \cite[Theorem 1.2]{KerPas-Arxiv}. The same authors  also have solutions to $HWP(K_n;T_1,T_2;\alpha,\gamma)$ for certain non-uniform 2-factor types \cite[Corollary 10.9]{MR3687793}, and some of these cases involve exactly one bipartite 2-factor type. Both of these results are of the following form: if the parameters satisfy some tedious arithmetic conditions, then $HWP(K_n;T_1,T_2;\alpha,\lfloor \frac{n-1}{2} \rfloor-\alpha)$ has a solution for all $\alpha \in \{ 0,\ldots, \lfloor \frac{n-1}{2} \rfloor \}$ except possibly for $\alpha \in \{ 1,\lfloor \frac{n-1}{2} \rfloor-1 \}$. We shall state the result of applying our layering technique to these solutions in a general form, without detailing the arithmetic conditions.

\begin{cor}\label{cor:KerPas}
Let $n \ge 8$ and $\lambda \ge 2$ be integers, and $T_1$, $T_2$ admissible 2-factor types for $K_n$ such that $T_1$ is bipartite and $T_2$ is not. Assume $HWP(K_n;T_1,T_2;\alpha,\frac{n-2}{2}-\alpha)$ has a solution for all $\alpha \in \{ 0,\ldots, \frac{n-2}{2}  \}$ except possibly for $\alpha \in \{ 1,\frac{n-4}{2}  \}$. Then $HWP(\lambda K_n;T_1,T_2;\alpha',\lfloor \lambda \frac{n-1}{2} \rfloor-\alpha')$ has a solution for all $\alpha' \in \{ \lfloor \frac{\lambda}{2} \rfloor,\ldots, \lfloor \lambda \frac{n-1}{2}\rfloor \}$ except possibly for $\alpha' \in \{ \lfloor \frac{\lambda}{2} \rfloor+1,\lfloor \lambda \frac{n-1}{2}\rfloor-1  \}$.
\end{cor}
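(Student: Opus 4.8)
The plan is to mimic the proofs of Corollaries~\ref{cor:layers-HWP-even} and~\ref{cor:layers-HWP-odd+even}. Since $T_1$ is a bipartite admissible $2$-factor type for $K_n$, all its parts are even and sum to $n$, so $n$ is even; put $M=\frac{n-2}{2}$, so that $M\ge 3$ and $\lfloor\lambda\frac{n-1}{2}\rfloor=\lambda M+\lfloor\frac{\lambda}{2}\rfloor$. Fix $\alpha'$ in the stated range and set $A=\alpha'-\lfloor\frac{\lambda}{2}\rfloor\in\{0,\dots,\lambda M\}$, writing $A=qM+\alpha$ with $0\le q\le\lambda$ and $0\le\alpha<M$ (and $q=\lambda$, $\alpha=0$ when $A=\lambda M$). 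Because $M\ge 3$, neither $0$ nor $M$ equals $1$ or $M-1$, so the hypothesis already guarantees solutions of $HWP(K_n;T_1,T_2;0,M)$ and $HWP(K_n;T_1,T_2;M,0)$.

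For the generic residue, $\alpha\notin\{1,M-1\}$, the hypothesis gives a solution of $HWP(K_n;T_1,T_2;\alpha,M-\alpha)$, and then Corollary~\ref{cor:layers-HWP-even}(i) immediately produces a solution of $HWP(\lambda K_n;T_1,T_2;\alpha',\lfloor\lambda\frac{n-1}{2}\rfloor-\alpha')$. So only the two residues $\alpha\in\{1,M-1\}$ remain, and in those cases $A\notin\{1,\lambda M-1\}$ is precisely the asserted exceptional set.

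For $\alpha\in\{1,M-1\}$ I would apply Theorem~\ref{the:layers-HWP}(ii) directly with every $\mu_i=1$; then ${\cal O}=\{1,\dots,\ell\}$, $\beta=\lambda$, and since $T_2$ is not bipartite we have $\bar{\alpha}=\lfloor\frac{\lambda}{2}\rfloor$, $\bar{\gamma}=0$. It therefore suffices to write $A=\sum_{i=1}^{\ell}x_i\alpha_i$ with $\sum_{i=1}^{\ell}x_i=\lambda$ and each $\alpha_i$ in the ``usable'' set $\{0,2,3,\dots,M-2,M\}$ (for which the hypothesis supplies a solution of $HWP(K_n;T_1,T_2;\alpha_i,M-\alpha_i)$). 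When $\alpha=1$ one can take $A=qM+1=(q-1)M+(M-2)+3$, i.e.\ the parts $M$ (multiplicity $q-1$), $M-2$, $3$, and $0$ (multiplicity $\lambda-q-1$); the excluded value $A=1$ forces $q\ge 1$ and $A\le\lambda M$ forces $q\le\lambda-1$, so all multiplicities are non-negative. When $\alpha=M-1$ one can take $A=qM+(M-1)=qM+2+(M-3)$, with parts $M$ (multiplicity $q$), $2$, $M-3$, and $0$ (multiplicity $\lambda-q-2$); here the excluded value $A=\lambda M-1$ forces $q\le\lambda-2$.

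The main obstacle is exactly this last step. Unlike in Corollary~\ref{cor:layers-HWP-odd+even}, where $\frac{n-4}{2}=M-1$ was an admissible value that could be used freely in the decomposition, here we must avoid \emph{both} $1$ and $M-1$, so the usable set of part-sizes is $\{0,2,3,\dots,M-2,M\}$. This set contains the consecutive pair $\{2,3\}$ as soon as $M\ge 5$, and then every $A\in\{0,\dots,\lambda M\}\setminus\{1,\lambda M-1\}$ is expressible, so the argument runs smoothly for $n\ge 12$; the small orders $n\in\{8,10\}$ (where $M\in\{3,4\}$ and the usable set, $\{0,3\}$ or $\{0,2,4\}$, is too sparse) must be checked separately and are ruled out by the arithmetic conditions inherited from the source results \cite{KerPas-Arxiv,MR3687793}. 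I would also verify the degenerate subcases --- $q=0$, $q=\lambda-1$, $q=\lambda-2$, and $A$ near the ends of its range --- and confirm that $\lambda\ge 2$ always leaves room for the two or three nonzero parts employed.
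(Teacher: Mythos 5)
Your skeleton is exactly the paper's: dispose of the residues $\alpha\notin\{1,\frac{n-4}{2}\}$ via Corollary~\ref{cor:layers-HWP-even}(i), and handle the two bad residues by exhibiting $\lambda$ usable part-sizes summing to $A=\alpha'-\lfloor\frac{\lambda}{2}\rfloor$, fed into Theorem~\ref{the:layers-HWP}(ii). Your decomposition for $\alpha=\frac{n-4}{2}$ ($=M-1$) coincides with the paper's, namely $M^{\la q\ra},\,M-3,\,2,\,0^{\la \lambda-q-2\ra}$. For $\alpha=1$ you write $A=(q-1)M+(M-2)+3$, whereas the paper writes $A=(q-1)M+(M-1)+2$ and takes $\alpha_2=\frac{n-4}{2}=M-1$ --- which is one of the two values the hypothesis allows to fail. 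So on this subcase your decomposition is the sound one and the paper's is not; your insistence that the usable set is $\{0,2,\dots,M-2,M\}$, with \emph{both} $1$ and $M-1$ removed, is the right reading of the hypothesis.

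The genuine gap is $n\in\{8,10\}$. You defer these to ``arithmetic conditions inherited from the source results,'' but the corollary is a self-contained conditional for all $n\ge 8$, and its hypothesis is actually satisfiable at $n=8$: by the list preceding Corollary~\ref{cor:HWP-small-even}, $HWP(K_8;[4,4],[3,5];\alpha,3-\alpha)$ has a solution exactly for $\alpha\in\{0,3\}$, i.e.\ the exceptions are precisely $\{1,\frac{n-4}{2}\}$. With usable parts $\{0,3\}$ only, layering reaches only $\alpha'\equiv\lfloor\frac{\lambda}{2}\rfloor\pmod 3$ (this is exactly exception (D8) of Corollary~\ref{cor:HWP-small-even}), so a value such as $\alpha'=\lfloor\frac{\lambda}{2}\rfloor+2$ is out of reach and the stated conclusion cannot be derived this way; for $n=10$ the usable set $\{0,2,4\}$ reaches only even $A$. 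Either the statement needs $n\ge 12$ (or a strengthened hypothesis), or a separate argument is required for $n\in\{8,10\}$; ``checked separately'' is not a proof. To be fair, the paper's own proof silently shares this defect (its $\alpha=\frac{n-4}{2}$ decomposition uses the part $2=M-1$ when $n=8$ and the part $M-3=1$ when $n=10$, and its $\alpha=1$ decomposition is broken for every $n$), so your write-up exposes a real problem in the source rather than creating a new one --- but as a proof of the corollary as stated, it is incomplete at the small orders.
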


\begin{proof}
By the assumptions, $HWP(K_n;T_1,T_2;0,\frac{n-2}{2})$ and
$HWP(K_n;T_1,T_2;\frac{n-2}{2},0)$ have solutions.
Take any $\alpha' \in \{ \lfloor \frac{\lambda}{2} \rfloor,\ldots, \lfloor \lambda \frac{n-1}{2}\rfloor \}$ such that $\alpha' \not\in \{ \lfloor \frac{\lambda}{2} \rfloor+1,\lfloor \lambda \frac{n-1}{2}\rfloor-1  \}$. Write $\alpha'-\lfloor \frac{\lambda}{2} \rfloor=q \frac{n-2}{2}+\alpha$, where $0 \le q \le \lambda$ and $0 \le \alpha < \frac{n-2}{2}$.

{\sc Case 1:} $\alpha \not\in \{ 1, \frac{n-4}{2} \}$. Then $HWP(K_n;T_1,T_2;\alpha,\frac{n-2}{2} -\alpha)$ has a solution by the assumption, and so by Corollary~\ref{cor:layers-HWP-even}(i), $HWP(\lambda K_n;T_1,T_2;\alpha',\lfloor \lambda \frac{n-1}{2} \rfloor -\alpha')$ has a solution.

{\sc Case 2:} $\alpha \in \{ 1, \frac{n-4}{2} \}$. As in the proof of Corollary~\ref{cor:layers-HWP-even}(i),
it suffices to find non-negative integers $\alpha_1,\ldots,\alpha_{\ell}$ and $x_1,\ldots,x_{\ell}$ such that $\lambda=\sum_{i=1}^{\ell} x_i$, $\alpha'-\lfloor \frac{\lambda}{2} \rfloor= \sum_{i=1}^{\ell} x_i\alpha_i$, and $HWP(K_n;T_1,T_2;\alpha_i,\frac{n-2}{2}-\alpha_i)$ has a solution for all $i$.

If $\alpha=1$, we write $\alpha'-\lfloor \frac{\lambda}{2} \rfloor=q \frac{n-2}{2}+1=(q-1) \frac{n-2}{2}+\frac{n-4}{2}+2$. Since $\alpha' \ne \lfloor \frac{\lambda}{2} \rfloor+1$, we may take $\alpha_1=\frac{n-2}{2}$, $\alpha_2=\frac{n-4}{2}$, $\alpha_3=2$, $\alpha_4=0$, $x_1=q-1$, $x_2=x_3=1$, and $x_4=\lambda-(q+1)$.

If $\alpha=\frac{n-4}{2}$, we write $\alpha'-\lfloor \frac{\lambda}{2} \rfloor=q \frac{n-2}{2}+\frac{n-4}{2}=q \frac{n-2}{2}+\frac{n-8}{2}+2$. Since $\alpha' \ne \lfloor \lambda \frac{n-1}{2}\rfloor-1 $, we have $\lambda \ge q+2$, and we may take $\alpha_1=\frac{n-2}{2}$, $\alpha_2=\frac{n-8}{2}$, $\alpha_3=2$, $\alpha_4=0$, $x_1=q$, $x_2=x_3=1$, and $x_4=\lambda-(q+2)$.
\end{proof}

\section{The Hamilton-Waterloo Problem for complete equi\-par\-tite multigraphs}\label{hwK_nmultip}

We shall now apply our detachment technique to the results of the previous section to obtain new solutions to the  Hamilton-Waterloo Problem for complete equipartite multigraphs.

First, Corollary~\ref{cor:direct}(iii) gives the following general result.

\begin{theo}\label{thm:HWP}
If $HWP(\lambda m K_n; T_1,T_2;\alpha_1,\alpha_2)$ has a solution, then $HWP(\lambda K_{n \times m}; mT_1,mT_2;$ $\alpha_1,\alpha_2)$ has a solution.
\end{theo}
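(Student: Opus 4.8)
The plan is to derive this as an immediate consequence of Corollary~\ref{cor:direct}(iii). First I would restate what $HWP(\lambda m K_n; T_1,T_2;\alpha_1,\alpha_2)$ having a solution means in the language of 2-factorizations: the complete multigraph $\lambda m K_n$ admits a 2-factorization of type $[T_1^{\la \alpha_1 \ra}, T_2^{\la \alpha_2 \ra}]$, where $\alpha_1 + \alpha_2 = \lfloor \frac{\lambda m (n-1)}{2} \rfloor$. Since a 2-factorization is the same thing as a resolvable cycle decomposition, we may regard this as a cycle decomposition $\mathcal D$ of $\lambda m K_n$ that is equivalent to a 2-factorization of type $\mathcal T = [T_1^{\la \alpha_1 \ra}, T_2^{\la \alpha_2 \ra}]$.

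Next I would invoke Theorem~\ref{thm:main} to obtain the detached cycle decomposition $\mathcal D'$ of $\lambda K_{n \times m}$, and then apply Corollary~\ref{cor:direct}(iii) directly: since $\mathcal D$ is equivalent to a 2-factorization of type $[T_1^{\la \alpha_1 \ra}, T_2^{\la \alpha_2 \ra}]$, the decomposition $\mathcal D'$ is equivalent to a 2-factorization of type $[(mT_1)^{\la \alpha_1 \ra}, (mT_2)^{\la \alpha_2 \ra}]$. This is exactly a 2-factorization of $\lambda K_{n \times m}$ realizing $HWP(\lambda K_{n \times m}; mT_1, mT_2; \alpha_1, \alpha_2)$, so that problem has a solution, as claimed.

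The only point that warrants a word of care — and hence the closest thing to an obstacle — is the bookkeeping translating Corollary~\ref{cor:direct}(iii)'s notation $[mT_1, \ldots, mT_\ell]$ into the multiplicity notation $[(mT_1)^{\la \alpha_1 \ra}, (mT_2)^{\la \alpha_2 \ra}]$: one must note that when the 2-factorization type of $\mathcal D$ consists of $\alpha_1$ copies of $T_1$ followed by $\alpha_2$ copies of $T_2$, multiplying each 2-factor type by $m$ produces $\alpha_1$ copies of $mT_1$ and $\alpha_2$ copies of $mT_2$, which is precisely the type needed for the Hamilton-Waterloo Problem on $\lambda K_{n \times m}$. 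No genuine difficulty arises; the substance is entirely contained in Theorem~\ref{thm:main} and Corollary~\ref{cor:direct}.
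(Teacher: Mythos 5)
Your proposal is correct and follows exactly the paper's route: the paper states Theorem~\ref{thm:HWP} as an immediate consequence of Corollary~\ref{cor:direct}(iii), which is precisely the translation you carry out. The notational bookkeeping you mention is the only content beyond the corollary, and you handle it correctly.
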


Theorems~\ref{the:layers-HWP} and \ref{thm:HWP} then immediately imply the following.

\begin{cor}\label{cor:layers-HWP}
For $i=1,\ldots,\ell$, assume $HWP(\mu_i K_n; \bar{T_1},\bar{T_2};\alpha_i,\gamma_i)$ has a solution. Let ${\cal O}=\{ i: \mu_i(n-1) \mbox{ is odd} \}$, and  let $\lambda$ and $m$ be positive integers. Take any $(x_1,\ldots,x_{\ell}) \in \NN^{\ell}$ such that $\sum_{i=1}^{\ell} x_i \mu_i=\lambda m$, and let $\beta=\sum_{i \in {\cal O}} x_i$.
\begin{enumerate}[(i)]
\item If $\beta \le 1$, then $HWP(\lambda K_{n \times m}; m\bar{T_1},m\bar{T_2};\sum_{i=1}^{\ell} x_i\alpha_i,\sum_{i=1}^{\ell} x_i\gamma_i)$ has a solution.
\item If $T_1$ is bipartite, then $HWP(\lambda K_{n \times m}; m\bar{T_1},m\bar{T_2};\sum_{i=1}^{\ell} x_i\alpha_i+\bar{\alpha},\sum_{i=1}^{\ell} x_i\gamma_i+\bar{\gamma})$ has a solution for all nonnegative integers $\bar{\alpha}, \bar{\gamma}$ such that
\begin{itemize}
\item $\bar{\alpha}+\bar{\gamma}=\lfloor \frac{\beta}{2} \rfloor$ and
\item $\bar{\gamma}=0$ if $T_2$ is not bipartite.
\end{itemize}
\end{enumerate}
\end{cor}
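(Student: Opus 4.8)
The plan is to combine the two results the corollary is explicitly claimed to follow from, namely Theorem~\ref{the:layers-HWP} (layering for the complete multigraph) and Theorem~\ref{thm:HWP} (detachment for the Hamilton-Waterloo Problem), in that order. First I would apply Theorem~\ref{the:layers-HWP}, but with $\lambda$ there replaced by $\lambda m$: the hypothesis $\sum_{i=1}^{\ell} x_i\mu_i=\lambda m$ is exactly what is needed, and $\mathcal{O}$ and $\beta$ are defined in the statement precisely as in Theorem~\ref{the:layers-HWP}. In case~(i), where $\beta\le 1$, Theorem~\ref{the:layers-HWP}(i) yields a solution to $HWP(\lambda m K_n;\bar T_1,\bar T_2;\sum_{i} x_i\alpha_i,\sum_i x_i\gamma_i)$; in case~(ii), where $\bar T_1$ is bipartite and $\bar\alpha,\bar\gamma$ satisfy the two bulleted conditions, Theorem~\ref{the:layers-HWP}(ii) yields a solution to $HWP(\lambda m K_n;\bar T_1,\bar T_2;\sum_{i} x_i\alpha_i+\bar\alpha,\sum_i x_i\gamma_i+\bar\gamma)$.

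The second step is a direct invocation of Theorem~\ref{thm:HWP}: a solution to $HWP(\lambda m K_n;\bar T_1,\bar T_2;\alpha_1^*,\alpha_2^*)$ immediately gives a solution to $HWP(\lambda K_{n\times m};m\bar T_1,m\bar T_2;\alpha_1^*,\alpha_2^*)$, with the parameters $\alpha_1^*,\alpha_2^*$ unchanged. Substituting the two parameter pairs produced in Step~1 gives exactly the two conclusions of the corollary. One small bookkeeping point to state carefully is that the parallel-class counts are preserved verbatim under detachment (only the cycle lengths are scaled by $m$), which is precisely what Corollary~\ref{cor:direct}(iii), and hence Theorem~\ref{thm:HWP}, guarantees; so no arithmetic on $\alpha,\gamma$ is needed at this stage.

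I do not expect any real obstacle here — the corollary is deliberately stated as ``Theorems~\ref{the:layers-HWP} and \ref{thm:HWP} then immediately imply the following,'' so the proof is essentially a two-line composition. The only thing requiring a moment of care is making sure the hypothesis of Theorem~\ref{the:layers-HWP} is matched with $\lambda m$ in place of its $\lambda$, so that the $\sum x_i\mu_i=\lambda m$ condition of the corollary lines up, and that the admissibility of $\bar T_1,\bar T_2$ as $2$-factor types for $K_n$ (implicit in the hypothesis that the relevant $HWP(\mu_i K_n;\cdot)$ instances have solutions) is enough to run both ingredient results. Thus the write-up is short: restate the hypotheses, apply Theorem~\ref{the:layers-HWP}(i) or (ii) as appropriate, then apply Theorem~\ref{thm:HWP}, and conclude.
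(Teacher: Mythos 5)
Your proposal is correct and matches the paper exactly: the paper gives no separate proof, stating only that Theorems~\ref{the:layers-HWP} and \ref{thm:HWP} immediately imply the corollary, which is precisely your two-step composition (layering with $\lambda m$ in place of $\lambda$, then detachment preserving the parallel-class counts). Nothing is missing.
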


Many solutions to  $HWP(K_{n \times m}; T_1,T_2;\alpha_1,\alpha_2)$ with $T_1$ and $T_2$ both of uniform types were found in \cite{MR3457814,MR3646005,MR3745157,KerPas-Arxiv}. As far as we can tell, the only results for non-uniform 2-factors are found in \cite{MR3687793}; the authors obtained extensive solutions for the complete equipartite graphs $K_{n \times m}$ with $n$ odd, assuming the parameters satisfy some tedious arithmetic conditions. We obtain the following concrete results. (Note that the Hamilton-Waterloo Problem with bipartite 2-factors is considered in Section~\ref{bip2facsecII}.)

Corollary~\ref{cor:HWP-small} and Theorem~\ref{thm:HWP} immediately yield the following.

\begin{cor}\label{cor:HWPmn-small}
For odd $n \le 17$ and $m \ge 2$, $HWP(\lambda K_{n \times m};mT_1,mT_2;\alpha',\gamma')$ has a solution for all admissible 2-factor types $T_1$ and $T_2$ for $K_n$, and all non-negative integers $\alpha'$ and $\gamma'$ satisfying $\alpha'+\gamma'=\lambda m \frac{n-1}{2}$ except possibly in the following cases:
\begin{description}
\item{(D7)} $n=7$, $T_1=[3,4]$, $T_2=[7]$, $\alpha' = 3\lambda m-1$;
\item{(D9a)} $n=9$, $T_1=[4,5]$, $\alpha' > 3\lambda m$;
\item{(D9b)} $n=9$, $T_1=[4,5]$, $T_2=[3^{\la 3 \ra}]$, $\alpha' \not\equiv 0 \pmod{3}$;
\item{(D9c)} $n=9$, $T_1=[3^{\la 3 \ra}]$, $T_2 \in \{ [3,6],[9] \}$, $\alpha'=4\lambda m-1$;
\item{(D11)} $n=11$, $T_1=[3,3,5]$, $\alpha' > 4\lambda m$;
\item{(D15)} $n=15$, $T_1=[3^{\la 5 \ra}]$, $T_2 \in \{ [3^{\la 2 \ra},4,5],[3,5,7],[5^{\la 3 \ra}], [4^{\la 2 \ra},7],[7,8] \}$, $\alpha'=7\lambda m-1$.
\end{description}
\end{cor}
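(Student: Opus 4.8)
The plan is to compose the two tools already in hand: Corollary~\ref{cor:HWP-small}, applied with the fold number $\lambda m$ in place of $\lambda$, followed by the detachment transfer of Theorem~\ref{thm:HWP}. Since $m \ge 2$, we have $\lambda m \ge 2$, so the hypothesis $\lambda \ge 2$ of Corollary~\ref{cor:HWP-small} is met by $\lambda m$; hence for odd $n \le 17$ and any admissible 2-factor types $T_1,T_2$ for $K_n$, the problem $HWP(\lambda m K_n; T_1,T_2;\alpha',\gamma')$ has a solution whenever $\alpha'+\gamma'=(\lambda m)\frac{n-1}{2}$, the only possible exceptions being those obtained from (D7), (D9a), (D9b), (D9c), (D11), (D15) by the substitution $\lambda \mapsto \lambda m$. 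No separate treatment of $\lambda=1$ is required.

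The second step is to invoke Theorem~\ref{thm:HWP}: a solution to $HWP(\lambda m K_n; T_1,T_2;\alpha',\gamma')$ yields a solution to $HWP(\lambda K_{n\times m}; mT_1,mT_2;\alpha',\gamma')$ with the very same values of $\alpha'$ and $\gamma'$, since detachment preserves the number of parallel classes of each type. Because $n$ is odd, both $\lambda m K_n$ and $\lambda K_{n\times m}$ are regular of the same even degree, and $\alpha'+\gamma'=\lambda m\frac{n-1}{2}=\lfloor \frac{\lambda m(n-1)}{2}\rfloor$, so the parameter count is consistent throughout and no admissibility issue arises.

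It then only remains to check the bookkeeping of the exceptions under $\lambda \mapsto \lambda m$: e.g. $\alpha'=3\lambda-1$ becomes $\alpha'=3\lambda m-1$, the constraint $\alpha'>3\lambda$ becomes $\alpha'>3\lambda m$, the congruence $\alpha'\not\equiv 0\pmod 3$ is unchanged, and similarly for the remaining cases; this reproduces exactly the list (D7)--(D15) of the present statement. Outside these cases a solution is therefore guaranteed.

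I do not expect a genuine obstacle here — the substance lies entirely in Corollary~\ref{cor:HWP-small} (hence ultimately in the layering results of Section~\ref{hwk_n}) and in the detachment Theorem~\ref{thm:HWP}. The one point that warrants care is the exception bookkeeping just described, together with the trivial observation that $m\ge 2$ already supplies the hypothesis $\lambda m\ge 2$.
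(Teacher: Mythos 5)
Your proposal is correct and matches the paper's argument exactly: the paper derives this corollary by applying Corollary~\ref{cor:HWP-small} with $\lambda m$ (which is at least $2$ since $m \ge 2$) in place of $\lambda$, and then transferring each solution via Theorem~\ref{thm:HWP}, with the exception list obtained by the substitution $\lambda \mapsto \lambda m$ just as you describe. The paper states this as an immediate consequence without further detail, so your bookkeeping is simply a fuller write-up of the same route.
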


Corollary~\ref{cor:HWP-small-even} and Theorem~\ref{thm:HWP} imply the following.

\begin{cor}\label{cor:HWPmn-small-even}
For even $n \le 10$ and $m \ge 2$, $HWP(\lambda K_{n \times m};mT_1,mT_2;\alpha',\gamma')$ has a solution for all admissible 2-factor types $T_1$ and $T_2$ for $K_n$, where $T_1$ is bipartite and $T_2$ is not, and all non-negative integers $\alpha'$ and $\gamma'$ satisfying $\alpha'+\gamma'= \lfloor \lambda m \frac{n-1}{2} \rfloor$ and $\alpha' \ge \lfloor  \frac{\lambda m}{2} \rfloor$ except possibly in the following cases:
\begin{description}
\item{(D6)} $n=6$, $T_1=[6]$, $T_2=[3,3]$, $\alpha'<\lambda m + \lfloor  \frac{\lambda m}{2} \rfloor$;
\item{(D8)} $n=8$, $T_1=[4^{\la 2 \ra}]$, $T_2=[3,5]$,  $\alpha' \not\equiv \lfloor  \frac{\lambda m}{2} \rfloor \pmod{3}$.
\end{description}
\end{cor}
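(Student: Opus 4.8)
The plan is to obtain this corollary from Corollary~\ref{cor:HWP-small-even} by detachment, in exactly the way Corollary~\ref{cor:HWPmn-small} was obtained from Corollary~\ref{cor:HWP-small}. Since $m \ge 2$ (and $\lambda \ge 1$), we have $\lambda m \ge 2$, so Corollary~\ref{cor:HWP-small-even} applies verbatim with the complete multigraph $\lambda m K_n$ in place of $\lambda K_n$; that is, with the parameter ``$\lambda$'' there set equal to $\lambda m$. This gives: for even $n \le 10$ and all admissible 2-factor types $T_1, T_2$ for $K_n$ with $T_1$ bipartite and $T_2$ not, $HWP(\lambda m K_n; T_1, T_2; \alpha', \gamma')$ has a solution whenever $\alpha'+\gamma' = \lfloor \lambda m \frac{n-1}{2} \rfloor$ and $\alpha' \ge \lfloor \frac{\lambda m}{2} \rfloor$, with the only possible exceptions being the images under $\lambda \mapsto \lambda m$ of cases (D6) and (D8) of Corollary~\ref{cor:HWP-small-even}, namely $n=6$, $T_1=[6]$, $T_2=[3,3]$, $\alpha' < \lambda m + \lfloor \frac{\lambda m}{2} \rfloor$, and $n=8$, $T_1=[4^{\la 2 \ra}]$, $T_2=[3,5]$, $\alpha' \not\equiv \lfloor \frac{\lambda m}{2} \rfloor \pmod{3}$. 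These are precisely the families (D6) and (D8) in the statement.

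Next I would feed each of these 2-factorizations of $\lambda m K_n$ into Theorem~\ref{thm:HWP}: a solution to $HWP(\lambda m K_n; T_1, T_2; \alpha', \gamma')$ produces a solution to $HWP(\lambda K_{n \times m}; mT_1, mT_2; \alpha', \gamma')$. Since $\lambda m K_n$ and $\lambda K_{n \times m}$ have the same degree, $\lambda m (n-1)$, the admissibility constraint $\alpha' + \gamma' = \lfloor \lambda m \frac{n-1}{2} \rfloor$ is identical on both sides and needs no adjustment, and the bound $\alpha' \ge \lfloor \frac{\lambda m}{2} \rfloor$ transfers unchanged. Stringing the two steps together yields the claimed result.

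There is essentially no obstacle: the argument is the two-link chain Corollary~\ref{cor:HWP-small-even} $\to$ Theorem~\ref{thm:HWP}, and the only care needed is the clerical verification that the floor expressions $\lfloor \lambda m \frac{n-1}{2} \rfloor$ and $\lfloor \frac{\lambda m}{2} \rfloor$, as well as the exceptional families, are exactly what the substitution $\lambda \mapsto \lambda m$ yields in Corollary~\ref{cor:HWP-small-even}. One may also note, although it is not required by the statement, that $mT_1$ remains bipartite; $mT_2$ need not be non-bipartite when $m$ is even, which is harmless, since the conclusion only asserts the existence of a 2-factorization of the stated type.
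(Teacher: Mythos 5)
Your proposal is correct and is exactly the paper's argument: the paper derives this corollary by applying Corollary~\ref{cor:HWP-small-even} with $\lambda m$ (which is at least $2$ since $m\ge 2$) in the role of $\lambda$, and then feeding the resulting 2-factorizations of $\lambda m K_n$ into Theorem~\ref{thm:HWP}. Your checks that the floor expressions and the exceptional families (D6), (D8) are the literal images of the substitution $\lambda \mapsto \lambda m$ are the only verifications needed.
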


From Corollary~\ref{cor:HWP-odd} and Theorem~\ref{thm:HWP} we obtain the following.

\begin{cor}\label{cor:HWP-odd-mn}
Let $m \ge 2$, and let $n \ge 11$ be odd. Furthermore, let $s$, $t$ be odd integers such that $3 \le s \le t$, $s$ and $t$ divide $n$, and $n \ne \frac{st}{\gcd(s,t)}$. Then  $HWP(\lambda K_{n \times m};[(ms)^\ast],[(mt)^\ast];\alpha',\gamma')$ has a solution for all non-negative integers $\alpha'$ and $\gamma'$ such that $\alpha'+\gamma'=\lambda m \frac{n-1}{2}$ and  $\alpha' \not\in \{1,\lambda m \frac{n-1}{2}-3,\lambda m\frac{n-1}{2}-1 \}$.
\end{cor}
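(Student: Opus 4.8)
The plan is to obtain this corollary as a direct composition of Corollary~\ref{cor:HWP-odd} with the detachment result Theorem~\ref{thm:HWP}, exactly as the other equipartite transfers in this section are derived.

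First I would apply Corollary~\ref{cor:HWP-odd} with $\lambda$ replaced by $\lambda m$. This is permissible: since $m \ge 2$ (and $\lambda \ge 1$) we have $\lambda m \ge 2$, and the hypotheses on $n$, $s$, $t$ in the present statement --- $n \ge 11$ odd, $s$ and $t$ odd with $3 \le s \le t$, $s \mid n$, $t \mid n$, and $n \ne \frac{st}{\gcd(s,t)}$ --- are precisely those demanded by Corollary~\ref{cor:HWP-odd}. It then guarantees a solution to $HWP(\lambda m K_n;[s^\ast],[t^\ast];\alpha',\gamma')$ for all non-negative $\alpha',\gamma'$ with $\alpha'+\gamma'=\lambda m\frac{n-1}{2}$ and $\alpha' \notin \{1,\lambda m\frac{n-1}{2}-3,\lambda m\frac{n-1}{2}-1\}$, which is exactly the range appearing in the conclusion we want.

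Next I would invoke Theorem~\ref{thm:HWP} with the given $\lambda$ and $m$, taking $T_1=[s^\ast]$ and $T_2=[t^\ast]$. Since scaling a uniform multiset gives $m[s^\ast]=[(ms)^\ast]$ and $m[t^\ast]=[(mt)^\ast]$, the solution produced in the previous step transfers to a solution of $HWP(\lambda K_{n\times m};[(ms)^\ast],[(mt)^\ast];\alpha',\gamma')$, completing the argument.

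I do not expect any genuine obstacle: the whole proof is a two-line chaining of earlier results, and the only care needed is bookkeeping --- verifying that $\lambda m \ge 2$, that the excluded index set $\{1,\lambda m\frac{n-1}{2}-3,\lambda m\frac{n-1}{2}-1\}$ coming out of Corollary~\ref{cor:HWP-odd} (with parameter $\lambda m$) is literally the one stated here, and that $m$-scaling of the uniform 2-factor types behaves as claimed. Admissibility of $[(ms)^\ast]$ and $[(mt)^\ast]$ for $\lambda K_{n\times m}$ is automatic from the conclusion of Theorem~\ref{thm:HWP}.
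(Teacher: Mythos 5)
Your proposal is correct and is exactly the paper's own derivation: the paper obtains this corollary by applying Corollary~\ref{cor:HWP-odd} with multiplicity $\lambda m$ (legitimate since $m \ge 2$ forces $\lambda m \ge 2$) and then transferring the resulting 2-factorization of $\lambda m K_n$ to $\lambda K_{n \times m}$ via Theorem~\ref{thm:HWP}, with $m[s^\ast]=[(ms)^\ast]$ and $m[t^\ast]=[(mt)^\ast]$ as you note.
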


Applying Theorem~\ref{thm:HWP} to Corollary~\ref{cor:layers-HWP-odd+even} we have the following.

\begin{cor}\label{cor:layers-HWPmn-odd+even}
Let $m \ge 2$, and $n$, $s$, $t$, and $\alpha'$ be non-negative integers such that
\begin{itemize}
\item $3 \le s < t$, $s|t$, $t|n$, $s$ is odd, and $t$ is even;
\item $t \not\in \{2s,6s\}$, $n \not\in \{t,2t,4t\}$, and $(s,n) \ne (3,6t)$;
\item $\alpha' \in \{ \lfloor \frac{\lambda m}{2} \rfloor,  \ldots, \lfloor \lambda m \frac{n-1}{2} \rfloor \}$, $\alpha' \ne \lfloor \frac{\lambda m }{2} \rfloor+1$, and $\alpha' \ge  \lambda m + \lfloor \frac{\lambda m}{2} \rfloor$ if $(s,n) \in \{(3,6),(3,12)\}$.
\end{itemize}
Then $HWP(\lambda K_{n \times m};[(mt)^\ast],[(ms)^\ast];\alpha',\lfloor \lambda m \frac{n-1}{2} \rfloor -\alpha')$ has a solution.
\end{cor}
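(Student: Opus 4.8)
The plan is to obtain the statement as a straightforward composition of the two main tools already in place: the layering result Corollary~\ref{cor:layers-HWP-odd+even} for complete multigraphs, followed by the detachment transfer principle Theorem~\ref{thm:HWP}. Concretely, I would first construct the required 2-factorization on the complete multigraph $\lambda m K_n$, and then detach it to $\lambda K_{n \times m}$.

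First I would apply Corollary~\ref{cor:layers-HWP-odd+even} with its multiplicity parameter taken to be $\lambda m$ in place of $\lambda$. Since $m \ge 2$, we have $\lambda m \ge 2$, so that corollary's standing hypothesis $\lambda \ge 2$ is met. The arithmetic conditions imposed there on $n$, $s$, $t$ (namely $3 \le s < t$, $s \mid t$, $t \mid n$, $s$ odd, $t$ even, $t \notin \{2s, 6s\}$, $n \notin \{t, 2t, 4t\}$, $(s,n) \ne (3, 6t)$) coincide with ours, and the constraints on $\alpha'$ — that $\alpha' \in \{\lfloor \frac{\lambda m}{2} \rfloor, \ldots, \lfloor \lambda m \frac{n-1}{2} \rfloor\}$, that $\alpha' \ne \lfloor \frac{\lambda m}{2} \rfloor + 1$, and that $\alpha' \ge \lambda m + \lfloor \frac{\lambda m}{2} \rfloor$ in the cases $(s,n) \in \{(3,6),(3,12)\}$ — are precisely the hypotheses of the present corollary read with $\lambda$ replaced by $\lambda m$. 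Hence $HWP(\lambda m K_n; [t^\ast], [s^\ast]; \alpha', \lfloor \lambda m \frac{n-1}{2} \rfloor - \alpha')$ has a solution.

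Next I would feed this into Theorem~\ref{thm:HWP} with $T_1 = [t^\ast]$ and $T_2 = [s^\ast]$, which converts a solution of $HWP(\lambda m K_n; [t^\ast], [s^\ast]; \alpha_1, \alpha_2)$ into a solution of $HWP(\lambda K_{n \times m}; m[t^\ast], m[s^\ast]; \alpha_1, \alpha_2)$. Since scaling a uniform type by $m$ gives $m[t^\ast] = [(mt)^\ast]$ and $m[s^\ast] = [(ms)^\ast]$ (directly from the definition of $mM$ in Section~\ref{termsec}), and here $\alpha_1 = \alpha'$, $\alpha_2 = \lfloor \lambda m \frac{n-1}{2} \rfloor - \alpha'$, this is exactly the asserted conclusion. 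There is no substantive obstacle in the argument; the only points requiring care are the clerical verification that the range and exceptional constraints on $\alpha'$ transfer verbatim under the substitution $\lambda \mapsto \lambda m$, and the trivial uniform-type scaling identity $m[c^\ast] = [(mc)^\ast]$ — both immediate from the definitions.
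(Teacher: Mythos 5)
Your proposal is correct and is exactly the paper's route: the paper introduces this corollary with the single line ``Applying Theorem~\ref{thm:HWP} to Corollary~\ref{cor:layers-HWP-odd+even} we have the following,'' i.e.\ invoke Corollary~\ref{cor:layers-HWP-odd+even} with multiplicity $\lambda m$ (legitimate since $\lambda m \ge 2$) and then detach via Theorem~\ref{thm:HWP}, using $m[t^\ast]=[(mt)^\ast]$ and $m[s^\ast]=[(ms)^\ast]$. Your verification that the hypotheses on $\alpha'$ are the earlier corollary's conditions under the substitution $\lambda \mapsto \lambda m$ is accurate, and nothing further is needed.
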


Finally, Corollary~\ref{cor:KerPas} combined with Theorem~\ref{thm:HWP} yields the following result.

\begin{cor}\label{cor:KerPas-mn}
Let $n \ge 8$ and $m \ge 2$ be integers, and $T_1$, $T_2$ admissible 2-factor types for $K_n$ such that $T_1$ is bipartite and $T_2$ is not. If $HWP(K_n;T_1,T_2;\alpha,\frac{n-2}{2}-\alpha)$ has a solution for all $\alpha \in \{ 0,\ldots, \frac{n-2}{2}  \}$ except possibly for $\alpha \in \{ 1,\frac{n-4}{2}  \}$, then $HWP(\lambda K_{n \times m};mT_1,mT_2;\alpha',\lfloor \lambda m \frac{n-1}{2} \rfloor-\alpha')$ has a solution for all $\alpha' \in \{ \lfloor \frac{\lambda m}{2} \rfloor,\ldots, \lfloor \lambda m \frac{n-1}{2}\rfloor \}$ except possibly for $\alpha' \in \{ \lfloor \frac{\lambda m}{2} \rfloor+1,\lfloor \lambda m \frac{n-1}{2}\rfloor-1  \}$.
\end{cor}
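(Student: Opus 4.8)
The plan is to deduce the statement for the complete equipartite multigraph $\lambda K_{n\times m}$ from the corresponding statement for the complete multigraph $\lambda m K_n$, exactly as in the other corollaries of this section: first obtain a family of Hamilton--Waterloo 2-factorizations of $\lambda m K_n$ via layering (Corollary~\ref{cor:KerPas}), and then push each of them through detachment (Theorem~\ref{thm:HWP}).

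First I would apply Corollary~\ref{cor:KerPas} with its parameter taken to be $\lambda m$ rather than $\lambda$. All of its hypotheses hold: $n\ge 8$ is assumed, $T_1$ is bipartite and $T_2$ is not, the assumption that $HWP(K_n;T_1,T_2;\alpha,\frac{n-2}{2}-\alpha)$ has a solution for every $\alpha\in\{0,\ldots,\frac{n-2}{2}\}$ except possibly $\alpha\in\{1,\frac{n-4}{2}\}$ is precisely the hypothesis of the present corollary, and $\lambda m\ge 2$ since $m\ge 2$, so the condition ``$\lambda\ge 2$'' of Corollary~\ref{cor:KerPas} is met. Its conclusion is then that $HWP(\lambda m K_n;T_1,T_2;\alpha',\lfloor\lambda m\frac{n-1}{2}\rfloor-\alpha')$ has a solution for all $\alpha'\in\{\lfloor\frac{\lambda m}{2}\rfloor,\ldots,\lfloor\lambda m\frac{n-1}{2}\rfloor\}$ except possibly $\alpha'\in\{\lfloor\frac{\lambda m}{2}\rfloor+1,\lfloor\lambda m\frac{n-1}{2}\rfloor-1\}$.

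Next I would apply Theorem~\ref{thm:HWP} to each such 2-factorization of $\lambda m K_n$, taking $(\alpha_1,\alpha_2)=(\alpha',\lfloor\lambda m\frac{n-1}{2}\rfloor-\alpha')$. Detachment replaces $T_1,T_2$ by $mT_1,mT_2$ but leaves the pair $(\alpha_1,\alpha_2)$ untouched, and it does not alter the admissible range of $\alpha'$ or the exceptional set; hence we recover exactly the claimed conclusion that $HWP(\lambda K_{n\times m};mT_1,mT_2;\alpha',\lfloor\lambda m\frac{n-1}{2}\rfloor-\alpha')$ has a solution for all $\alpha'\in\{\lfloor\frac{\lambda m}{2}\rfloor,\ldots,\lfloor\lambda m\frac{n-1}{2}\rfloor\}$ except possibly $\alpha'\in\{\lfloor\frac{\lambda m}{2}\rfloor+1,\lfloor\lambda m\frac{n-1}{2}\rfloor-1\}$.

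There is no substantive obstacle here; the proof is a two-line chaining of Corollary~\ref{cor:KerPas} and Theorem~\ref{thm:HWP}. The one point worth verifying is that the various floor expressions are mutually consistent: both $\lambda m K_n$ and $\lambda K_{n\times m}$ are $\lambda m(n-1)$-regular, so $\lfloor\lambda m\frac{n-1}{2}\rfloor$ is in each case the number of 2-factors in a 2-factorization, and admissibility of $mT_1,mT_2$ for $\lambda K_{n\times m}$ is inherited from admissibility of $T_1,T_2$ for $K_n$. Note that $mT_1$ stays bipartite, while $mT_2$ may become bipartite when $m$ is even; this is immaterial, since the argument only invokes already-established existence results and never requires $mT_2$ to be non-bipartite.
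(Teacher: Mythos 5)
Your proposal is correct and matches the paper's argument exactly: the paper also obtains this corollary by applying Corollary~\ref{cor:KerPas} with the multiplicity $\lambda m$ (valid since $m\ge 2$ forces $\lambda m\ge 2$) and then pushing the resulting 2-factorizations of $\lambda m K_n$ through Theorem~\ref{thm:HWP}. Nothing further is needed.
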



\section{Bipartite 2-factorizations of complete multigraphs}\label{sec:bip-Kn}

Our layering technique has the greatest effect when all 2-factor types are bipartite. In this section, we first prove a general result on existence of bipartite 2-factorizations of complete multigraphs. As an immediate application,  using previous results for complete graphs, we obtain a complete solution to the Oberwolfach Problem and an almost complete solution to the Hamilton-Waterloo Problem for $\lambda K_n$ with bipartite 2-factors.

But first, we need to introduce the notion of a signature of a 2-factorization type, which will prove  a very useful tool in this context.

\begin{defn}{\rm
Let ${\cal G}$ be an $r$-regular graph and ${\cal T}=[ T_1^{\la \alpha_1 \ra},\ldots,T_k^{\la \alpha_k \ra}]$ a 2-factorization type for ${\cal G}$. Then $[\alpha_1,\ldots,\alpha_k]$ is called a {\em signature} for ${\cal T}$.
}
\end{defn}

\begin{lemma}\label{lem:signature}
Let ${\cal G}$ be an $r$-regular graph.
\begin{enumerate}[(i)]
\item Let ${\cal T}$ be a 2-factorization type for ${\cal G}$.
\begin{enumerate}[(a)]
\item If $A=[\alpha_1,\ldots,\alpha_k]$ is a signature for ${\cal T}$, then $\alpha_1+ \ldots+\alpha_k=\lfloor \frac{r}{2} \rfloor$; that is, $A$ is a refinement of $[ \lfloor \frac{r}{2} \rfloor ]$.
\item If $A$ is a signature for ${\cal T}$, then any refinement of $A$ is a signature for ${\cal T}$.
\end{enumerate}
\item Assume $A$ is a refinement of $A'$, and $A'$ is a refinement of $[ \lfloor \frac{r}{2} \rfloor ]$. If ${\cal G}$ admits a 2-factorization of type ${\cal T}$ for every 2-factorization type ${\cal T}$ satisfying property ${\cal P}$ that admits signature $A$, then ${\cal G}$ admits a 2-factorization of type ${\cal T}'$ for every 2-factorization type ${\cal T}'$ satisfying property ${\cal P}$ that admits signature $A'$.
\end{enumerate}
\end{lemma}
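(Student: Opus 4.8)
The plan is to prove the three assertions in order, as they build on one another. For part (i)(a), the starting point is the definition: a signature $A=[\alpha_1,\ldots,\alpha_k]$ for ${\cal T}$ records that ${\cal T}$ has the form $[T_1^{\la\alpha_1\ra},\ldots,T_k^{\la\alpha_k\ra}]$, so $\cal G$ decomposes into $\alpha_1+\cdots+\alpha_k$ many 2-factors (possibly plus a 1-factor if $r$ is odd). Counting degrees at a vertex: each 2-factor contributes $2$ to the degree and the optional 1-factor contributes $1$, so $2(\alpha_1+\cdots+\alpha_k)$ equals $r$ if $r$ is even and $r-1$ if $r$ is odd; either way $\alpha_1+\cdots+\alpha_k=\lfloor r/2\rfloor$, which says precisely that $A$ is a refinement of $[\lfloor r/2\rfloor]$.

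For part (i)(b), suppose $A'=[y_1,\ldots,y_t]$ is a refinement of $A=[\alpha_1,\ldots,\alpha_k]$, witnessed by a partition $\{P_i : i=1,\ldots,k\}$ of $\{1,\ldots,t\}$ with $\sum_{j\in P_i} y_j=\alpha_i$. Given a 2-factorization of $\cal G$ of type ${\cal T}=[T_1^{\la\alpha_1\ra},\ldots,T_k^{\la\alpha_k\ra}]$, I would simply re-group its list of 2-factors: among the $\alpha_i$ copies of a 2-factor of type $T_i$, split them into blocks of sizes $y_j$ for $j\in P_i$, and regard each block of size $y_j$ as "$y_j$ copies of type $T_i$." This exhibits the very same 2-factorization as having type $[\,\bigsqcup_i \bigsqcup_{j\in P_i}(T_i)^{\la y_j\ra}\,]$, which is a 2-factorization type admitting signature $A'$ (with the associated 2-factor type for index $j\in P_i$ being $T_i$). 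Hence $A'$ is also a signature for (this re-expression of) ${\cal T}$. The only mild subtlety is bookkeeping: a signature is attached to a 2-factorization \emph{type}, and refining the signature changes how we present that type, not the underlying factorization; I would state this carefully so that the phrase "$A'$ is a signature for ${\cal T}$" is unambiguous.

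For part (ii), assume $A$ refines $A'$ and $A'$ refines $[\lfloor r/2\rfloor]$, and suppose $\cal G$ admits a 2-factorization of type ${\cal T}$ for every type ${\cal T}$ with property ${\cal P}$ admitting signature $A$. Let ${\cal T}'$ be any 2-factorization type with property ${\cal P}$ admitting signature $A'$, say ${\cal T}'=[\,(T'_1)^{\la\alpha'_1\ra},\ldots,(T'_{k'})^{\la\alpha'_{k'}\ra}\,]$ with $A'=[\alpha'_1,\ldots,\alpha'_{k'}]$. Using the partition of $\{1,\ldots,t\}$ (indices of $A$) into blocks summing to the entries of $A'$, I would \emph{refine ${\cal T}'$ accordingly}: replace each $(T'_i)^{\la\alpha'_i\ra}$ by $\bigsqcup_{j\in P_i}(T'_i)^{\la y_j\ra}$, obtaining a 2-factorization type ${\cal T}$ that admits signature $A$ and still satisfies property ${\cal P}$. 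Here I must invoke the (implicit, contextual) fact that ${\cal P}$ is a property preserved under this kind of refinement of the signature — in the applications of this lemma, ${\cal P}$ is a condition on the \emph{set} of 2-factor types used (e.g. "all bipartite," or "all of type $\bar T_1$ or $\bar T_2$"), which is manifestly unaffected by how many copies of each are specified. Then by hypothesis $\cal G$ admits a 2-factorization of type ${\cal T}$; but by part (i)(b) (read in reverse — re-grouping copies of each $T'_i$), such a factorization \emph{is} a 2-factorization of type ${\cal T}'$. Hence $\cal G$ admits a 2-factorization of type ${\cal T}'$, as required.

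The main obstacle is not mathematical depth but precision of language: the lemma juggles a multiset-with-multiplicities ("2-factorization type"), an auxiliary multiset ("signature"), and the refinement relation among them, and the content is that refining the signature corresponds to a purely notational regrouping of copies. I would therefore be careful to make explicit, once and for all, that a 2-factorization $\{F_1,\ldots,F_N\}$ with $N=\lfloor r/2\rfloor$ determines many (type, signature) pairs — one for each way of grouping equal-type factors — and that part (ii) is really the observation that the hypothesis, though phrased for the finest grouping $A$, transfers to any coarser grouping $A'$ (subject to ${\cal P}$ being insensitive to grouping). Everything else is the routine verification already used in part (i).
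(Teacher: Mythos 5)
Your proposal is correct and follows essentially the same route as the paper: a degree count at a vertex for (i)(a), the observation that splitting $T_1^{\la \alpha_1 \ra}$ into $T_1^{\la x \ra}, T_1^{\la \alpha_1 - x \ra}$ leaves the multiset ${\cal T}$ unchanged for (i)(b), and an appeal to (i)(b) plus transitivity of refinement for (ii). The one place you overcomplicate matters is part (ii): the paper simply notes that ${\cal T}'$ itself admits signature $A$ by (i)(b), so your caveat about ${\cal P}$ being preserved under refinement is unnecessary --- the ``refined'' type you construct is literally the same multiset as ${\cal T}'$, hence satisfies ${\cal P}$ automatically and no extra hypothesis on ${\cal P}$ is needed.
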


\begin{proof}
\begin{enumerate}[(i)]
\item Assume $A=[\alpha_1,\ldots,\alpha_k]$ is a signature for ${\cal T}$. Then ${\cal T}$ is a multiset of the form $[ T_1^{\la \alpha_1 \ra},\ldots,T_k^{\la \alpha_k \ra}]$, for some 2-factor types $T_1,\ldots,T_k$, so either $r$ or $r-1$ is equal to $2(\alpha_1+\ldots+\alpha_k)$, and (a) follows.

    To see (b), observe that
    replacing $T_1^{\la \alpha_1 \ra}$ in the  list $[ T_1^{\la \alpha_1 \ra},\ldots,T_k^{\la \alpha_k \ra}]$ with $T_1^{\la x \ra},T_1^{\la \alpha_1-x \ra}$, for any positive integer $x< \alpha_1$, yields the same multiset, so $[x,\alpha_1-x,\alpha_2,\ldots,\alpha_k]$ is also a signature for ${\cal T}$. Since any refinement of $A$ can be obtained from $A$ by a sequence of such operations, any refinement of $A$ is also a signature for ${\cal T}$.

\item Assume ${\cal G}$ admits a 2-factorization of type ${\cal T}$ for every 2-factorization type ${\cal T}$ satisfying property ${\cal P}$ that admits signature $A$. Let ${\cal T}'$ be any 2-factorization type satisfying property ${\cal P}$ that admits signature $A'$. By (b), ${\cal T}'$ also admits signature $A$, and since it also satisfies property ${\cal P}$, by assumption, ${\cal G}$ admits a 2-factorization of type ${\cal T}'$.
\end{enumerate}
\end{proof}

A classic result by H\"aggkvist \cite{MR821524} shows that if $n \equiv 2 \pmod{4}$, then $K_n$ admits a 2-factorization of type ${\cal T}$ for every admissible bipartite 2-factorization type ${\cal T}=[T_1^{\la \alpha_1 \ra},\ldots,T_k^{\la \alpha_k \ra}]$ with all $\alpha_i$ even. A much more recent paper by Bryant and Danziger \cite{MR2833961} extends this result to the case $n \equiv 0 \pmod{4}$; in this case, $K_n$ admits a 2-factorization of type ${\cal T}$ for every admissible bipartite 2-factorization type ${\cal T}=[T_1^{\la \alpha_1 \ra},\ldots,T_k^{\la \alpha_k \ra}]$ with $\alpha_1 \ge 3$ odd, and $\alpha_i$ even for all $i \ge 2$.
In our terminology, these results can then be summarized as follows.

\begin{theo}\cite{MR821524,MR2833961}\label{the:HagBryDan}
Let $n$ be a positive even integer and ${\cal T}$ an admissible bipartite 2-factorization type for $K_n$ with a signature $[\beta_1,\ldots,\beta_s]$. Suppose there exists a refinement $A$ of $[\beta_1,\ldots,\beta_s]$ such that one of the following holds:
\begin{enumerate}[(i)]
\item $n \equiv 2 \pmod{4}$ and $A=[ 2^{\la  \frac{n-2}{4} \ra }]$;
\item $n \equiv 0 \pmod{4}$  and $A=[ 3,2^{\la  \frac{n-8}{4} \ra }]$.
\end{enumerate}
Then $K_n$ admits a 2-factorization of type ${\cal T}$.
\end{theo}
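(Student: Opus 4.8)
The plan is to recognize that this theorem simply repackages the theorems of H\"aggkvist~\cite{MR821524} and of Bryant and Danziger~\cite{MR2833961} in the language of signatures, with Lemma~\ref{lem:signature} doing all the bookkeeping. Put $r=n-1$, so that $K_n$ is $r$-regular and $\lfloor\frac{r}{2}\rfloor=\frac{n-2}{2}$. I would first translate the two source results. For a 2-factorization type ${\cal T}=[T_1^{\la\alpha_1\ra},\ldots,T_k^{\la\alpha_k\ra}]$ of $K_n$ with the $T_i$ distinct (so $\sum_i\alpha_i=\frac{n-2}{2}$), I claim: when $n\equiv 2\pmod 4$, ${\cal T}$ admits the signature $A=[2^{\la\frac{n-2}{4}\ra}]$ if and only if every $\alpha_i$ is even; and when $n\equiv 0\pmod 4$ (necessarily $n\ge 8$ here for $A$ to be a genuine multiset), ${\cal T}$ admits the signature $A=[3,2^{\la\frac{n-8}{4}\ra}]$ if and only if exactly one $\alpha_i$ is odd and that one is at least $3$. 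In both cases the ``if'' direction is immediate from Lemma~\ref{lem:signature}(i)(b) (refine $[\alpha_1,\ldots,\alpha_k]$, in case (ii) by peeling a single $3$ off the odd $\alpha_i$ and halving everything else, using $\sum_i\alpha_i=\frac{n-2}{2}$), and the ``only if'' direction follows by grouping equal 2-factor types in a presentation of ${\cal T}$ that witnesses the signature $A$. With these equivalences, H\"aggkvist's theorem reads: if $n\equiv 2\pmod 4$ then $K_n$ has a 2-factorization of every admissible bipartite 2-factorization type that admits signature $[2^{\la\frac{n-2}{4}\ra}]$; and the result of Bryant and Danziger reads: if $n\equiv 0\pmod 4$ then $K_n$ has a 2-factorization of every admissible bipartite 2-factorization type that admits signature $[3,2^{\la\frac{n-8}{4}\ra}]$.

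Next I would apply Lemma~\ref{lem:signature}(ii), taking ${\cal P}$ to be the property ``admissible bipartite 2-factorization type for $K_n$'', taking $A$ to be the signature specified in case (i) or (ii) of the statement, and $A'=[\beta_1,\ldots,\beta_s]$. Its hypotheses hold: $A'$ is a signature for ${\cal T}$, hence a refinement of $[\lfloor\frac{r}{2}\rfloor]$ by Lemma~\ref{lem:signature}(i)(a), while $A$ is a refinement of $A'$ by assumption; and the ``input'' hypothesis (that $K_n$ has a 2-factorization of every type with property ${\cal P}$ and signature $A$) is exactly the translated source result from the previous paragraph. Lemma~\ref{lem:signature}(ii) then yields that $K_n$ has a 2-factorization of every admissible bipartite 2-factorization type admitting signature $A'=[\beta_1,\ldots,\beta_s]$; specializing to ${\cal T}$ finishes the proof. (One could equally avoid Lemma~\ref{lem:signature}(ii) and argue directly: $A$ is a signature for ${\cal T}$ by Lemma~\ref{lem:signature}(i)(b), and ${\cal T}$ is admissible and bipartite, so the translated source result applies to ${\cal T}$ at once.)

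The only place requiring real care is the bookkeeping in the signature equivalences, in particular case (ii): from a presentation ${\cal T}=[S_1^{\la 3\ra},S_2^{\la 2\ra},\ldots,S_{1+(n-8)/4}^{\la 2\ra}]$ one must observe that merging equal factor types can leave only the class containing $S_1$ with odd total multiplicity (it collects the $3$ together with some even number of further copies), so that class has odd multiplicity $\ge 3$ while every other class collects only $2$'s and is even; and conversely that $\alpha_1\ge 3$ odd with the remaining $\alpha_i$ even lets one peel a $3$ off $\alpha_1$ and halve everything else to land exactly on $[3,2^{\la(n-8)/4\ra}]$. I would also note explicitly that in case (ii) the hypothesis is vacuous when $n=4$, since any refinement of $[\beta_1,\ldots,\beta_s]$ has the same sum $\frac{n-2}{2}=1<3$ and so cannot equal a multiset containing a $3$. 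Beyond this, the argument is just the two citations together with Lemma~\ref{lem:signature}.
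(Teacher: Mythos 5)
Your proposal is correct and matches the paper's intent: the paper gives no proof of Theorem~\ref{the:HagBryDan} at all, presenting it simply as a restatement "in our terminology" of the cited results of H\"aggkvist and of Bryant and Danziger, and your argument supplies exactly the signature bookkeeping (via Lemma~\ref{lem:signature}) that the paper leaves implicit. The translation of the two source results, the $n=4$ vacuity remark, and the direct application of Lemma~\ref{lem:signature}(i)(b) are all accurate, so there is nothing to correct.
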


Using layering, we obtain the following extension for complete multigraphs.

\begin{theo}\label{the:bipartite-Kn}
Let $n$ be a positive even integer  and ${\cal T}=[T_1^{\la \beta_1 \ra},\ldots,T_s^{\la \beta_s \ra}]$ an admissible bipartite 2-factorization type for $\lambda K_n$, with each $T_i$ an admissible bipartite 2-factor type for $K_n$. Suppose there exists a refinement $[\alpha_1,\ldots,\alpha_{k}]$ of $[\beta_1,\ldots,\beta_s]$ such that one of the following holds:
\begin{enumerate}[(i)]
\item $n \equiv 2 \pmod{4}$ and $\Big\vert \{ i: \alpha_i \mbox{ is odd} \}  \Big\vert \le \lfloor \frac{\lambda}{2} \rfloor$; or
\item $n \equiv 0 \pmod{4}$,  $\Big\vert  \{ i: \alpha_i \mbox{ is odd} \} \Big\vert  \le \lambda + \lfloor \frac{\lambda}{2} \rfloor$, and $\Big\vert  \{ i: \alpha_i \mbox{ is odd}, \alpha_i \ge 3 \} \Big\vert  \ge \lambda $.
\end{enumerate}
Then $\lambda K_n$ admits a 2-factorization of type ${\cal T}$.
\end{theo}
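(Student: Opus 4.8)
The plan is to write $\lambda K_n$ as an edge-disjoint union of $\lambda$ copies of $K_n$, equip each copy with a bipartite $2$-factorization furnished by Theorem~\ref{the:HagBryDan}, and then apply the layering technique of Theorem~\ref{the:layers} (taking $\ell=\lambda$, all $\mu_i=1$, all $x_i=1$) to splice these into a single $2$-factorization of $\lambda K_n$, with the bipartite $2$-factors that layering adds prescribed so that the resulting type is exactly ${\cal T}$. Since $n$ is even, $n-1$ is odd, so in Theorem~\ref{the:layers} we have ${\cal O}=\{1,\dots,\lambda\}$ and $\beta=\lambda$; hence layering supplies precisely $\lfloor\lambda/2\rfloor$ free bipartite $2$-factors, of any types admissible for $2K_n$.

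I would first reduce, via Lemma~\ref{lem:signature}, to the case where $[\alpha_1,\dots,\alpha_k]$ is itself a signature of ${\cal T}$; so write ${\cal T}=[U_1^{\la\alpha_1\ra},\dots,U_k^{\la\alpha_k\ra}]$ with each $U_i$ an admissible bipartite $2$-factor type for $K_n$, and order the indices so that $\alpha_1,\dots,\alpha_p$ are the odd ones, and (in case~(ii)) so that moreover $\alpha_1,\dots,\alpha_q$ are those odd entries that are at least $3$, where $q\ge\lambda$ by hypothesis, whence $k\ge p\ge q\ge\lambda$. The task is then to choose nonnegative integers $z_1,\dots,z_k$ with $\sum_i z_i=\lfloor\lambda/2\rfloor$ (the multiplicities of the $U_i$ among the free factors) and to partition the multiset $[U_1^{\la\alpha_1-z_1\ra},\dots,U_k^{\la\alpha_k-z_k\ra}]$ into $\lambda$ sub-multisets ${\cal T}_1,\dots,{\cal T}_\lambda$, each of cardinality $\lfloor\tfrac{n-1}{2}\rfloor$, each of the form to which Theorem~\ref{the:HagBryDan} applies: when $n\equiv 2\pmod 4$, every multiplicity of ${\cal T}_j$ is even; when $n\equiv 0\pmod 4$, exactly one multiplicity of ${\cal T}_j$ is odd and $\ge 3$, namely that of $U_j$ (available since $q\ge\lambda$), and all others are even. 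Given such a choice, each $K_n$ admits ${\cal T}_j$ by Theorem~\ref{the:HagBryDan}, and Theorem~\ref{the:layers}, applied with base types ${\cal T}_1,\dots,{\cal T}_\lambda$ and the $\lfloor\lambda/2\rfloor$ free types listed off as $[U_1^{\la z_1\ra},\dots,U_k^{\la z_k\ra}]$ (admissible for $2K_n$ since admissible for $K_n$), produces a $2$-factorization of $\lambda K_n$ of type $\bigsqcup_{j=1}^{\lambda}{\cal T}_j\ \sqcup\ [U_1^{\la z_1\ra},\dots,U_k^{\la z_k\ra}]=[U_1^{\la\alpha_1\ra},\dots,U_k^{\la\alpha_k\ra}]={\cal T}$, as wanted.

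The whole content of the proof is thus the feasibility of this bookkeeping, and this is where conditions~(i) and~(ii) are used. In both cases the parity of $\alpha_i-z_i$ is forced, being the parity of the number of copies in which $U_i$ occupies the ``odd'' slot of its ${\cal T}_j$: for $n\equiv 2\pmod 4$ one needs $z_i\equiv\alpha_i\pmod 2$ (so $z_i\ge 1$ whenever $\alpha_i$ is odd), and for $n\equiv 0\pmod 4$ one needs $z_j$ even for $j\le\lambda$ and $z_i$ odd, hence $\ge 1$, for $\lambda<i\le p$. Summing these, $\sum_i z_i$ is forced $\equiv\lfloor\lambda(n-1)/2\rfloor\pmod 2$ in case~(i) and $\equiv\lfloor\lambda(n-1)/2\rfloor+\lambda\pmod 2$ in case~(ii); a short computation using $n\equiv 2$, resp.\ $0$, $\pmod 4$ shows that in each case this common value equals $\lfloor\lambda/2\rfloor\bmod 2$, so the required $\sum_i z_i=\lfloor\lambda/2\rfloor$ is compatible with the parities. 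The forced $z_i\ge 1$'s then contribute $p$ to $\sum_i z_i$ in case~(i), and $p-\lambda$ in case~(ii), so $\sum_i z_i=\lfloor\lambda/2\rfloor$ needs exactly $p\le\lfloor\lambda/2\rfloor$, resp.\ $p-\lambda\le\lfloor\lambda/2\rfloor$ --- precisely conditions~(i) and~(ii); and in case~(ii) the demand that $U_j$'s multiplicity in ${\cal T}_j$ be $\ge 3$ forces the cap $z_j\le\alpha_j-3$ for $j\le\lambda$. Since $\sum_i\alpha_i=\lfloor\lambda(n-1)/2\rfloor$ is large enough to absorb the total of these minimal demands whenever the case is nontrivial (equality precisely when $n=8$ in case~(ii)), the leftover even surplus can be allotted greedily within the caps $z_i\le\alpha_i$, and the matrix of multiplicities $(y_{i,j})$ realizing ${\cal T}_1,\dots,{\cal T}_\lambda$ follows from a routine transportation argument --- all residual row and column sums being even, with only the single prescribed odd entry per column to place when $n\equiv 0\pmod 4$.

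The step I expect to be the main obstacle is exactly this last one: simultaneously satisfying the forced parities of the $\alpha_i-z_i$, the lower bounds forcing certain $z_i\ge 1$ (and, when $n\equiv 0\pmod 4$, the cap $z_j\le\alpha_j-3$ that keeps the exceptional factor's multiplicity $\ge 3$), and the caps $z_i\le\alpha_i$ together with $\sum_i y_{i,j}=\lfloor\tfrac{n-1}{2}\rfloor$. I expect it to come down to a clean counting argument once $p$, $q$, $\lfloor\lambda/2\rfloor$ and $\lfloor\lambda(n-1)/2\rfloor$ are tied together by the congruences above, but it will need a little care in the tight boundary cases --- $n=4$, where case~(ii) is vacuous because $q\ge\lambda$ already forces $\sum_i\alpha_i\ge 3\lambda>\lfloor\tfrac{3\lambda}{2}\rfloor$, and $n=8$, where all the relevant inequalities become equalities and every $z_i$ and $y_{i,j}$ is pinned down. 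Everything else --- the splitting $\lambda K_n=\bigoplus_{j=1}^{\lambda}K_n$ and the invocations of Theorems~\ref{the:HagBryDan} and~\ref{the:layers} --- is routine.
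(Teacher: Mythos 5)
Your proposal is correct and follows essentially the same route as the paper: decompose $\lambda K_n$ into $\lambda$ copies of $K_n$, give each a bipartite 2-factorization via Theorem~\ref{the:HagBryDan}, and apply Theorem~\ref{the:layers} with $\ell=\lambda$ and all $\mu_i=x_i=1$, so that the $\lfloor\frac{\lambda}{2}\rfloor$ free bipartite 2-factors absorb the leftover multiplicities. The only difference is bookkeeping: where you impose parity constraints on the $z_i$ and solve a transportation problem for the $y_{i,j}$, the paper first refines the signature (via Lemma~\ref{lem:signature}(ii)) to one with all entries in $\{1,2,3\}$ --- exactly $\lambda$ threes in case (ii) and at most $\lfloor\frac{\lambda}{2}\rfloor$ ones --- which makes the distribution among the $\lambda$ copies and the free slots immediate.
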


\begin{proof}
For $\lambda=1$, Conditions (i) and (ii) imply the assumptions of Theorem~\ref{the:HagBryDan}, so the result is immediate. Hence assume $\lambda \ge 2$.

By Lemma~\ref{lem:signature}(ii), we may assume that ${\cal T}=[ T_1^{\la \alpha_1 \ra},\ldots,T_k^{\la \alpha_k \ra}]$ and that its signature $[\alpha_1,\ldots,\alpha_k]$ satisfies (i) or (ii).
Keep in mind that $\sum_{i=1}^k \alpha_i=\lfloor \frac{\lambda (n-1)}{2} \rfloor=\lambda \frac{n-2}{2}+ \lfloor \frac{\lambda}{2} \rfloor$.

\begin{enumerate}[(i)]
\item Let $n \equiv 2 \pmod{4}$, so $\frac{n-2}{2}$ is even. Since $\Big\vert \{ i: \alpha_i \mbox{ is odd} \} \Big\vert \le \lfloor \frac{\lambda}{2} \rfloor$, we may assume (taking a further refinement of $[\alpha_1,\ldots,\alpha_k]$ if necessary) that for some $t \le k$ we have that
$\alpha_1=\ldots=\alpha_{t}=2$  and $\sum_{i=1}^{t} \alpha_i=\lambda \frac{n-2}{2}$.
Let $\{ P_1,\ldots,P_{\lambda} \}$ be a partition of $\{ 1,\ldots,t \}$ such that $[ \alpha_i: i \in P_j ]=[ 2^{\la \frac{n-2}{4}\ra} ]$ for all $j=1,\ldots,\lambda$. By Theorem~\ref{the:HagBryDan}, $K_n$ admits a 2-factorization of type $\T_j=[ T_i^{\la \alpha_i \ra}: i \in P_j ]$ for all $j=1,\ldots,\lambda$. Let $T_1',\ldots,T_{\lfloor \frac{\lambda}{2} \rfloor}'$ be 2-factor types such that
$${\cal T} = [T_1^{\la \alpha_1 \ra},\ldots,T_k^{\la \alpha_k \ra}] = [T_1^{\la \alpha_1 \ra},\ldots,T_{t}^{\la \alpha_{t} \ra}] \sqcup [T_1',\ldots,T_{\lfloor \frac{\lambda}{2} \rfloor}']=
    \sqcup_{j=1}^{\lambda} \T_j \sqcup [T_1',\ldots,T_{\lfloor \frac{\lambda}{2} \rfloor}'].$$
We now use Theorem~\ref{the:layers} with $\ell=\lambda$, $\mu_i=x_i=1$ for all $i=1,\ldots,\lambda$, ${\cal O}=\{ 1,\ldots,\lambda \}$, and $\beta=\lambda$ to obtain a 2-factorization of $\lambda K_n$ of type $\T$.

\item Let $n \equiv 0 \pmod{4}$, so $\frac{n-2}{2}$ is odd, and observe that $n \ge 8$ since for $n=4$, ${\cal T}$ has signature $[1]$.
Given the assumptions on the number of indices $i$ such that $\alpha_i$ is odd, and replacing $[\alpha_1,\ldots,\alpha_k]$ with an appropriate refinement if necessary, we may assume that $\Big\vert  \{ i: \alpha_i \mbox{ is odd}, \alpha_i \ge 3 \} \Big\vert  = \Big\vert  \{ i: \alpha_i=3 \} \Big\vert =
\lambda $ and
$\Big\vert  \{ i: \alpha_i=1 \} \Big\vert  \le \lfloor \frac{\lambda}{2} \rfloor$. Moreover, since $\sum_{i=1}^k \alpha_i=\lambda \frac{n-2}{2}+ \lfloor \frac{\lambda}{2} \rfloor$ and $\frac{n-2}{2}$ is odd and at least 3, we may further assume that there exists $t$, $\lambda \le t \le k$, such that $\alpha_1=\ldots=\alpha_{\lambda}=3$, $\alpha_{\lambda+1}= \ldots = \alpha_{t}=2$, and
$\sum_{i=1}^{t} \alpha_i=\lambda \frac{n-2}{2}$. Then there is a partition $\{ P_1,\ldots,P_{\lambda} \}$ of $\{ 1,\ldots, t\}$ such that $[\alpha_i: i \in P_j] = [3,2^{\la \frac{n-8}{4} \ra} ]$ for all $j=1,\ldots,\lambda$.
 Hence by Theorem~\ref{the:HagBryDan}, for each $j=1,\ldots,\lambda$, there exists a 2-factorization of $K_n$ of type $[ T_i^{\la \alpha_i \ra}: i \in P_j]$. The proof is then completed using Theorem~\ref{the:layers} just as in Case (i).
\end{enumerate}
\end{proof}

Note that in the statement of Theorem~\ref{the:bipartite-Kn} we assumed that no 2-factor contains 2-cycles. However, the proof shows that the result can be extended as follows.

\begin{cor}\label{cor:bipartite-Kn}
Let $n$ be a positive even integer  and ${\cal T}=[T_1^{\la \beta_1 \ra},\ldots,T_s^{\la \beta_s \ra}]$ an admissible bipartite 2-factorization type for $\lambda K_n$. Suppose there exists a refinement $[\alpha_1,\ldots,\alpha_{k}]$ of $[\beta_1,\ldots,\beta_s]$ such that one of the following holds:
\begin{enumerate}[(i)]
\item $n \equiv 2 \pmod{4}$ and $\displaystyle{\sum_{i \atop {2 \in T_i}} \alpha_i + \Big\vert \{ i: \alpha_i \mbox{ is odd}, 2 \not\in T_i \} \Big\vert \le \Big\lfloor \frac{\lambda}{2} \Big\rfloor}$; or
\item $n \equiv 0 \pmod{4}$,  $\displaystyle{\sum_{i \atop {2 \in T_i}} \alpha_i + \Big\vert  \{ i: \alpha_i \mbox{ is odd},2 \not\in T_i \} \Big\vert  \le \lambda + \Big\lfloor \frac{\lambda}{2} \Big\rfloor}$, and $\Big\vert  \{ i: \alpha_i \mbox{ is odd}, \alpha_i \ge 3 \} \Big\vert  \ge \lambda $.
\end{enumerate}
Then $\lambda K_n$ admits a 2-factorization of type ${\cal T}$.
\end{cor}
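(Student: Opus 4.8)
The plan is to run the proof of Theorem~\ref{the:bipartite-Kn} essentially verbatim, inserting one modification to accommodate $2$-cycles. As there, the case $\lambda=1$ is immediate (admissibility for $K_n$ forbids $2$-cycles, so the hypotheses collapse to those of Theorem~\ref{the:HagBryDan}), so I would assume $\lambda\ge2$. By Lemma~\ref{lem:signature}(ii) I may replace ${\cal T}$ by ${\cal T}=[T_1^{\la\alpha_1\ra},\ldots,T_k^{\la\alpha_k\ra}]$ whose signature $[\alpha_1,\ldots,\alpha_k]$ is the guaranteed refinement satisfying (i) or (ii), keeping in mind the identity $\sum_{i}\alpha_i=\lfloor\frac{\lambda(n-1)}{2}\rfloor=\lambda\frac{n-2}{2}+\lfloor\frac{\lambda}{2}\rfloor$.

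The one new observation is this: a $2$-factor containing a $2$-cycle cannot occur in a $2$-factorization of the simple graph $K_n$, hence cannot be one of the $2$-factors produced ``inside a layer'' via Theorem~\ref{the:HagBryDan}; but such a $2$-factor is a legitimate $2$-factor of $2K_n$, so it may appear among the $\lfloor\frac{\beta}{2}\rfloor$ ``filled-in'' $2$-factors $F_k=I_{2k-1}\oplus I_{2k}$ produced by Theorem~\ref{the:layers} (each of which may be \emph{any} admissible bipartite $2$-factor type for $2K_n$). So I would, refining $[\alpha_1,\ldots,\alpha_k]$ further if needed, arrange the decomposition exactly as in Theorem~\ref{the:bipartite-Kn} but with every copy of every $T_i$ satisfying $2\in T_i$ assigned to the filled-in block $[T_1',\ldots,T_{\lfloor\beta/2\rfloor}']$ rather than to one of the $\lambda$ groups $\T_j$ that get realized inside a single $K_n$. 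Concretely: for $n\equiv2\pmod4$ I extract from the non-$2$-cycle types a sub-multiset of $2$'s of total size $\lambda\frac{n-2}{2}$ and partition it into $\lambda$ signatures $[2^{\la\frac{n-2}{4}\ra}]$ (each realizable in $K_n$ by Theorem~\ref{the:HagBryDan}(i)), leaving a residue of total size $\lfloor\frac{\lambda}{2}\rfloor$ to absorb all the $2$-cycle types and the odd leftovers; for $n\equiv0\pmod4$ I instead extract $\lambda$ three-chunks and $\lambda\frac{n-8}{4}$ two-chunks from the non-$2$-cycle types, partitioned into $\lambda$ signatures $[3,2^{\la\frac{n-8}{4}\ra}]$ (realizable by Theorem~\ref{the:HagBryDan}(ii)), again with a residue of size $\lfloor\frac{\lambda}{2}\rfloor$. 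I then invoke Theorem~\ref{the:layers} with $\ell=\lambda$, all $\mu_i=x_i=1$, ${\cal O}=\{1,\ldots,\lambda\}$, $\beta=\lambda$, to assemble the $2$-factorization of $\lambda K_n$ of type ${\cal T}$.

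What remains is to verify that hypotheses (i) and (ii) are exactly what make this allocation possible, and this bookkeeping is the crux. For $n\equiv2\pmod4$ the non-$2$-cycle part has total size $\sum_i\alpha_i-\sum_{i\,:\,2\in T_i}\alpha_i$, of which at least $|\{i:\alpha_i\text{ odd},\,2\notin T_i\}|$ must stay odd; extracting $\lambda\frac{n-2}{2}$ of it as $2$'s is possible precisely when $\sum_{i\,:\,2\in T_i}\alpha_i+|\{i:\alpha_i\text{ odd},\,2\notin T_i\}|\le\lfloor\frac{\lambda}{2}\rfloor$, which is hypothesis (i); the parity needed for the residue to split correctly is automatic, since the displayed identity forces $\sum_{i\,:\,2\in T_i}\alpha_i+|\{i:\alpha_i\text{ odd},\,2\notin T_i\}|\equiv\lfloor\frac{\lambda}{2}\rfloor\pmod2$. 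For $n\equiv0\pmod4$ the harder point is to guarantee $\lambda$ three-chunks drawn from \emph{non-$2$-cycle} types: here one uses that each $2$-cycle type with odd multiplicity $\ge3$ contributes at least $3$ to $\sum_{i\,:\,2\in T_i}\alpha_i$, so the first inequality of (ii) bounds how much of the supply ``$|\{i:\alpha_i\text{ odd},\,\alpha_i\ge3\}|\ge\lambda$'' can be consumed by $2$-cycle types, leaving enough non-$2$-cycle odd mass to form the required three-chunks while keeping the residue at size $\lfloor\frac{\lambda}{2}\rfloor$ (with parity again controlled by the displayed identity, now $\sum_{i\,:\,2\in T_i}\alpha_i+|\{i:\alpha_i\text{ odd},\,2\notin T_i\}|\equiv\lambda+\lfloor\frac{\lambda}{2}\rfloor\pmod2$). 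I expect this case-(ii) accounting --- juggling the two inequalities of (ii), the parity identity, and the requirement that the three-chunks feeding the $K_n$-layers avoid $2$-cycle types --- to be the main obstacle; the rest is a routine transcription of the proof of Theorem~\ref{the:bipartite-Kn}.
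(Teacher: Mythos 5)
Your overall strategy is the right one and is surely what the authors intend: every copy of a type $T_i$ with $2\in T_i$ cannot occur in a $2$-factorization of a simple layer $K_n$, so it must be realized as one of the $\lfloor\beta/2\rfloor$ filled-in $2$-factors $I_{2k-1}\oplus I_{2k}$ supplied by Theorem~\ref{the:layers}, while everything else is packed into the $\lambda$ layers via Theorem~\ref{the:HagBryDan}. (The paper gives no written proof of this corollary beyond the remark preceding it, so there is nothing more explicit to compare against.) Your case~(i) accounting is correct: hypothesis~(i) is exactly the condition that the types with $2\notin T_i$ carry enough even mass to fill the $\lambda$ layers with signature $[2^{\la \frac{n-2}{4}\ra}]$, and the parity congruence you cite does hold.

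The case~(ii) bookkeeping, however --- which you correctly identify as the crux and then assert rather than verify --- does not close. The construction forces the residue to have size exactly $\lfloor\frac{\lambda}{2}\rfloor$ and to absorb \emph{all} $\sum_{2\in T_i}\alpha_i$ copies of $2$-cycle types, and it needs $\lambda$ three-chunks drawn from types with $2\notin T_i$; hypothesis~(ii) as stated guarantees neither, because the count $\vert\{i:\alpha_i\mbox{ odd},\ \alpha_i\ge 3\}\vert\ge\lambda$ may be met in part by $2$-cycle types. Concretely, take $n=8$, $\lambda=4$, ${\cal T}=\Big[[2,6]^{\la 3\ra},[8]^{\la 6\ra},[4,4]^{\la 5\ra}\Big]$ with refinement $[3,3,3,3,2]$: then $\sum_{2\in T_i}\alpha_i+\vert\{i:\alpha_i\mbox{ odd},2\notin T_i\}\vert=3+3=6=\lambda+\lfloor\frac{\lambda}{2}\rfloor$ and $\vert\{i:\alpha_i\mbox{ odd},\alpha_i\ge 3\}\vert=4=\lambda$, so (ii) holds; yet the three copies of $[2,6]$ cannot fit into the $\lfloor\frac{\lambda}{2}\rfloor=2$ filled-in slots, and the non-$2$-cycle types supply only $11$ copies against the $\lambda\frac{n-2}{2}=12$ needed to fill the layers, so your construction cannot produce this $2$-factorization. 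The argument does go through once the last condition of (ii) is read (or amended) as $\vert\{i:\alpha_i\mbox{ odd},\ \alpha_i\ge 3,\ 2\notin T_i\}\vert\ge\lambda$: combined with the first inequality this forces $\sum_{2\in T_i}\alpha_i\le\lfloor\frac{\lambda}{2}\rfloor$, the leftover odd non-$2$-cycle indices number $\vert\{i:\alpha_i\mbox{ odd},2\notin T_i\}\vert-\lambda$, and the parity identity you state fills the remainder of the residue with $2$'s. You should either prove that the stated hypotheses imply this strengthened condition (they do not, as the example shows) or flag the needed correction to the statement.
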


Theorem~\ref{the:bipartite-Kn} yields the following new results on the Oberwolfach Problem and the Hamilton-Waterloo problem for $\lambda K_n$. Note that the cases with $\lambda=1$ of Corollaries~\ref{cor:bipartite-Kn-OP} and \ref{cor:bipartite-Kn-HWP} have already been solved  in \cite{MR2833961}, but we shall include them here for completeness.

\begin{cor}\label{cor:bipartite-Kn-OP}
Let $n$ be even, and $T$ any admissible bipartite 2-factor type for $K_n$. Then $OP(\lambda K_n;T)$ has a solution.
\end{cor}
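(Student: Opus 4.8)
The plan is to recognize $OP(\lambda K_n;T)$ as the existence of a $2$-factorization of $\lambda K_n$ of type $\mathcal{T}=[T^{\la k\ra}]$ with $k=\lfloor\lambda(n-1)/2\rfloor$, and to hand this to Theorem~\ref{the:bipartite-Kn} by producing an appropriate refinement of its (singleton) signature $[k]$. Since $n$ is even we have $k=\lambda\frac{n-2}{2}+\lfloor\frac{\lambda}{2}\rfloor$, and since $T$ is a bipartite $2$-factor type admissible for $K_n$, taking $T_1=\dots=T_k=T$ meets all the standing hypotheses of Theorem~\ref{the:bipartite-Kn}; what remains is to exhibit a refinement $A$ of $[k]$ fulfilling condition (i) or (ii) of that theorem, whose shape depends on $n\bmod 4$.

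If $n\equiv 2\pmod 4$ then $\frac{n-2}{2}$ is even and $\frac{n-2}{4}\in\ZZ$, so I would take
$$A=\Big[\,2^{\la \lambda(n-2)/4\ra},\ 1^{\la\lfloor\lambda/2\rfloor\ra}\,\Big],$$
a refinement of $[k]$ with exactly $\lfloor\frac{\lambda}{2}\rfloor$ odd parts, which is condition (i). If $n\equiv 0\pmod 4$ with $n\ge 8$, then $\frac{n-2}{2}=3+2\cdot\frac{n-8}{4}$ is odd and $\ge 3$, so $k=3\lambda+2\lambda\cdot\frac{n-8}{4}+\lfloor\frac{\lambda}{2}\rfloor$, and I would take
$$A=\Big[\,3^{\la\lambda\ra},\ 2^{\la \lambda(n-8)/4\ra},\ 1^{\la\lfloor\lambda/2\rfloor\ra}\,\Big],$$
which refines $[k]$, has exactly $\lambda$ odd parts equal to $3$ and $\lambda+\lfloor\frac{\lambda}{2}\rfloor$ odd parts in total, which is condition (ii). In either case Theorem~\ref{the:bipartite-Kn} yields a $2$-factorization of $\lambda K_n$ of type $\mathcal{T}$, i.e.\ a solution of $OP(\lambda K_n;T)$. (For $\lambda=1$ these choices collapse $A$ to $[2^{\la(n-2)/4\ra}]$, resp.\ $[3,2^{\la(n-8)/4\ra}]$, recovering Theorem~\ref{the:HagBryDan} exactly, as expected.)

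The one genuinely separate case is $n=4$, where the only admissible bipartite $2$-factor type is $T=[4]$ and Theorem~\ref{the:bipartite-Kn} is vacuous (condition (ii) cannot hold since $k<3\lambda$). Here I would instead invoke layering directly: $K_4$ admits a $2$-factorization of type $[[4]]$ (a Hamilton $4$-cycle together with a $1$-factor), so applying Theorem~\ref{the:layers} with $\ell=1$, $\mu_1=1$, $x_1=\lambda$, $\mathcal{O}=\{1\}$, $\beta=\lambda$, and choosing $[4]$ for each of the $\lfloor\lambda/2\rfloor$ fill-in bipartite types admissible for $2K_4$, produces a $2$-factorization of $\lambda K_4$ of type $[[4]^{\la\lfloor 3\lambda/2\rfloor\ra}]$, as required. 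I do not expect any substantive obstacle: all the difficulty is already contained in Theorem~\ref{the:bipartite-Kn}, and what is left is bookkeeping of the refinement — tracking the parity of $\frac{n-2}{2}$ and the extra $\lfloor\frac{\lambda}{2}\rfloor$ summand of $k$ coming from the odd-degree correction — together with remembering to dispatch the degenerate case $n=4$ by hand.
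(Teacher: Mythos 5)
Your proof is correct, and for $n\ge 6$ it is essentially the paper's own argument: both reduce $OP(\lambda K_n;T)$ to Theorem~\ref{the:bipartite-Kn} applied to ${\cal T}=[T^{\la k\ra}]$ by exhibiting a refinement of $[k]$ meeting condition (i) or (ii); your explicit refinements $[2^{\la \lambda(n-2)/4\ra},1^{\la\lfloor\lambda/2\rfloor\ra}]$ and $[3^{\la\lambda\ra},2^{\la\lambda(n-8)/4\ra},1^{\la\lfloor\lambda/2\rfloor\ra}]$ check out (they sum to $k=\lambda\frac{n-2}{2}+\lfloor\frac{\lambda}{2}\rfloor$ and have the right count of odd parts), and are if anything a cleaner packaging than the paper's case split on the parity of $\alpha-3\lambda$. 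The one genuine divergence is $n=4$: you correctly observe that Theorem~\ref{the:bipartite-Kn}(ii) is vacuous there and dispatch the case by layering the 2-factorization $\bigl[[4]\bigr]$ of $K_4$ via Theorem~\ref{the:layers} with $\beta=\lambda$ and all fill-in factors equal to $[4]$, which is a valid and self-contained argument; the paper instead simply cites Gvozdjak's result on the Oberwolfach problem for complete multigraphs for uniform $T$. Your route has the small advantage of keeping the corollary internal to the paper's machinery, at the cost of one extra invocation of the layering theorem.
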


\begin{proof}
If $n=4$, then $T$ is uniform, so the result follows from \cite{MR1468840}.
For $n \ge 6$, we use Theorem~\ref{the:bipartite-Kn} with  ${\cal T}=[T^{\la \alpha \ra}]$, for $\alpha=\lfloor \frac{\lambda (n-1)}{2} \rfloor$.

First assume $\lambda=1$, so $\alpha=\frac{n-2}{2}$. If $n \equiv 2 \pmod{4}$, then $\alpha$ is even, and Condition (i) in Theorem~\ref{the:HagBryDan} holds, and if $n \equiv 0 \pmod{4}$, then $\alpha$ is odd and $\alpha \ge 3$, so Condition (ii) in Theorem~\ref{the:HagBryDan} holds. In both cases, the result follows.

Hence assume $\lambda \ge 2$. It suffices to find a refinement $[\alpha_1,\ldots,\alpha_k]$ of $[\alpha]$ satisfying Condition (i) or (ii) of Theorem~\ref{the:bipartite-Kn}. If $n \equiv 2 \pmod{4}$, then $[\alpha]$ has a refinement $[2^{\la \frac{\alpha}{2} \ra}]$ or $[2^{\la \frac{\alpha-1}{2}\ra},1]$, so (i) clearly holds. For $n \equiv 0 \pmod{4}$, first observe that  $\alpha \ge \frac{1}{2}(\lambda (n-1)-1) \ge 3 \lambda+1$ if $n \ge 12$ or $\lambda \ge 3$, and  $\alpha=7=3\lambda+1$ if $n=8$ and $\lambda=2$. Hence we can write $\alpha=3\lambda + 2x$ or $\alpha=3\lambda + 2x+1$ for some non-negative integer $x$, and one of $[3^{\la \lambda \ra},2^{\la x \ra},1]$, $[3^{\la \lambda \ra},2^{\la x \ra}]$, and $[3^{\la \lambda \ra},1]$ is the required refinement of $[\alpha]$.
The result follows by Theorem~\ref{the:bipartite-Kn}.
\end{proof}

\begin{cor}\label{cor:bipartite-Kn-HWP}
Let $T_1,T_2$ be any distinct admissible bipartite 2-factor types for $K_n$. Then $HWP(\lambda K_n;T_1,T_2;\beta_1,\beta_2)$  has a solution for all positive integers $\beta_1,\beta_2$ such that
\begin{itemize}
\item $\beta_1+\beta_2=\lfloor \frac{\lambda (n-1)}{2} \rfloor$; and
\item if $\lambda=1$, then at most one of $\beta_1,\beta_2$ is odd and $1 \not\in \{ \beta_1,\beta_2\}$.
\end{itemize}
\end{cor}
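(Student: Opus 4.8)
The plan is to obtain this from Theorem~\ref{the:bipartite-Kn}. Every admissible bipartite 2-factor type has all its cycle lengths even, so $\sum_i c_i=n$ forces $n$ to be even; moreover two \emph{distinct} such types exist only when $n\ge 8$, so we may assume $n\ge 8$. Write ${\cal T}=[T_1^{\la\beta_1\ra},T_2^{\la\beta_2\ra}]$, which is an admissible bipartite 2-factorization type for $\lambda K_n$ with signature $[\beta_1,\beta_2]$, and recall $\beta_1+\beta_2=\lfloor\frac{\lambda(n-1)}{2}\rfloor=\lambda\frac{n-2}{2}+\lfloor\frac{\lambda}{2}\rfloor$. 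By Theorem~\ref{the:bipartite-Kn} it suffices to exhibit a refinement $[\alpha_1,\dots,\alpha_k]$ of $[\beta_1,\beta_2]$ satisfying Condition~(i) when $n\equiv 2\pmod{4}$, or Condition~(ii) when $n\equiv 0\pmod{4}$. I would build one by refining each of $\beta_1,\beta_2$ into a block of $2$'s plus a prescribed number of $3$'s, with at most one $1$ to correct parity: an even value $b$ becomes $b/2$ twos, an odd value $b\ge 3$ becomes one $3$ and $(b-3)/2$ twos, and $b=1$ stays a single $1$; more generally, planting $t_i\ge 0$ threes in the $\beta_i$-block leaves a remainder $\beta_i-3t_i$, which I fill with twos if it is even and with twos and one $1$ if it is odd.

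When $n\equiv 2\pmod{4}$ the number $\frac{n-2}{2}$ is even, so $\beta_1+\beta_2\equiv\lfloor\frac{\lambda}{2}\rfloor\pmod{2}$. Taking $t_1=t_2=0$, the refinement contributes exactly one odd part for each odd $\beta_i$ and none otherwise, so its number of odd parts is $|\{i:\beta_i\text{ odd}\}|\in\{0,1,2\}$, which is congruent to $\lfloor\frac{\lambda}{2}\rfloor$ modulo $2$ and hence $\le\lfloor\frac{\lambda}{2}\rfloor$ (for $\lambda=1$ it equals $0$ by the hypothesis on $\beta_1,\beta_2$): Condition~(i) holds. When $n\equiv 0\pmod{4}$ the number $\frac{n-2}{2}$ is odd and $\ge 3$, and Condition~(ii) also demands at least $\lambda$ odd parts of size $\ge 3$. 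For $\lambda=1$ this is exactly the hypothesis of Theorem~\ref{the:HagBryDan}: $\beta_1+\beta_2$ is odd, so precisely one $\beta_i$ is odd and it is $\ge 3$ by hypothesis, and $[\beta_i,2^{\la\beta_j/2\ra}]$ works. For $\lambda\ge 2$ I would choose $t_1,t_2\ge 0$ with $t_1+t_2=\lambda$ and $3t_i\le\beta_i$; such a choice exists exactly when $\lfloor\beta_1/3\rfloor+\lfloor\beta_2/3\rfloor\ge\lambda$, which holds automatically once $n\ge 12$ or $\lambda\ge 4$ since then $\beta_1+\beta_2$ is large enough. In that event the refinement has exactly $\lambda$ parts equal to $3$, and since the two remainders sum to $\beta_1+\beta_2-3\lambda\equiv\lfloor\frac{\lambda}{2}\rfloor\pmod{2}$, a short parity count bounds the total number of odd parts by $\lambda+\lfloor\frac{\lambda}{2}\rfloor$ and the number of $1$'s by $\lfloor\frac{\lambda}{2}\rfloor$; this gives Condition~(ii).

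What remains are only the cases $n=8$ with $(\lambda,\{\beta_1,\beta_2\})\in\{(2,\{2,5\}),(3,\{2,8\}),(3,\{5,5\})\}$, for which no refinement of $[\beta_1,\beta_2]$ can have $\lambda$ odd parts of size $\ge 3$, so Theorem~\ref{the:bipartite-Kn} does not apply. These finitely many instances (where necessarily $\{T_1,T_2\}=\{[8],[4,4]\}$) I would settle directly from $K_8=K_4\oplus K_4\oplus K_{4,4}$: build a $[4,4]$-factor of $\lambda K_8$ from a $4$-cycle of one $\lambda K_4$ together with a $4$-cycle of the other, $2$-factorize $\lambda K_{4,4}$ so as to mix $8$-cycles with pairs of $4$-cycles (using Lemma~\ref{lem:layers}-type arguments), and check by a short count that every admissible pair $(\beta_1,\beta_2)$ is realized, in either order of $T_1,T_2$. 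The two points I expect to be delicate are the parity bookkeeping in the $n\equiv 0\pmod{4}$ case — keeping the number of odd parts at most $\lambda+\lfloor\frac{\lambda}{2}\rfloor$ while still attaining $\lambda$ parts of size $\ge 3$ — and recognizing that Theorem~\ref{the:bipartite-Kn} genuinely omits the short list of $n=8$ parameters above, which then needs its own argument.
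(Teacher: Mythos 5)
Your main argument follows the paper's route almost exactly: reduce to $n\ge 8$, feed ${\cal T}=[T_1^{\la\beta_1\ra},T_2^{\la\beta_2\ra}]$ into Theorem~\ref{the:bipartite-Kn} by refining each $\beta_i$ into threes, twos and at most one $1$, and verify Conditions (i)/(ii) by the same parity count the paper uses (in case (ii), $\delta_1+\delta_2\equiv\lfloor\frac{\lambda}{2}\rfloor\pmod 2$ together with $\delta_1+\delta_2\le 2$ forces $\delta_1+\delta_2\le\lfloor\frac{\lambda}{2}\rfloor$). You also isolate exactly the same three exceptional parameter sets $n=8$, $(\lambda,\{\beta_1,\beta_2\})\in\{(2,\{2,5\}),(3,\{2,8\}),(3,\{5,5\})\}$ that the paper does. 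All of that is sound.

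The gap is in your treatment of those exceptional cases. Decomposing $\lambda K_8$ as $\lambda K_4\oplus\lambda K_4\oplus\lambda K_{4,4}$ and drawing every $[8]$-factor from the $\lambda K_{4,4}$ piece caps the number of $[8]$-factors at $2\lambda$: the $K_4\oplus K_4$ piece has no edges between its two halves, so any 2-factor built inside it is forced to be of type $[4,4]$. But the corollary must also hold with the roles of $T_1$ and $T_2$ swapped, and in the exceptional cases the $[8]$-type may be the majority type: for $(\lambda,\{\beta_1,\beta_2\})=(2,\{2,5\})$ you may need $5$ factors of type $[8]$ while only $4$ are available, and for $(3,\{2,8\})$ you may need $8$ while only $6$ are available. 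So the ``short count'' you defer to would in fact fail for two of the three exceptional triples. The paper instead disposes of these cases by layering: $HWP(K_8;T_1,T_2;\alpha,3-\alpha)$ is known to have a solution for every $\alpha\in\{0,1,2,3\}$ when both types are bipartite (the lone $n=8$ exception (C8) involves the non-bipartite type $[3,5]$), and Theorem~\ref{the:layers-HWP}(ii) with $\lambda$ unit layers and suitable $(\alpha_i,x_i)$ --- e.g.\ $\alpha_1=2$, $\alpha_2=0$ for $\beta_1=2$, and $\alpha_1=3$, $\alpha_2=2$, $\alpha_3=0$ for $\beta_1=5$ --- realizes every required pair. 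You would need to replace your construction with something of this kind, or at least permit $[8]$-factors using edges from both $K_4$'s together with $K_{4,4}$, and then actually carry out the count.
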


\begin{proof}
For $n \in \{ 4,6\}$, there is a unique bipartite 2-factor type for $K_n$, so we must have $n \ge 8$. Let $\beta_1,\beta_2$ be any positive integers satisfying the conditions of the corollary.
We shall use Theorem~\ref{the:bipartite-Kn} with  ${\cal T}=[T_1^{\la \beta_1 \ra}, T_2^{\la \beta_2 \ra}]$.

First assume $\lambda=1$, so $\beta_1+\beta_2=\frac{n-2}{2}$. By the assumption, if $n \equiv 2 \pmod{4}$, then  $\beta_1$ and $\beta_2$ are both even,  and if $n \equiv 0 \pmod{4}$, then exactly one of $\beta_1$ and $\beta_2$ is odd, and neither is equal to 1. It follows that $[\beta_1,\beta_2]$ has a refinement satisfying one of (i) and (ii) in Theorem~\ref{the:HagBryDan}, and $K_n$ possesses a 2-factorization of type $[T_1^{\la \beta_1 \ra}, T_2^{\la \beta_2 \ra}]$.

Hence assume $\lambda \ge 2$.
Suppose first that $n \equiv 2 \pmod{4}$. We have $\Big\vert \{ i: \beta_i \mbox{ is odd} \} \Big\vert \le 2$, so condition (i) of Theorem~\ref{the:bipartite-Kn} holds whenever $\lambda \ge 4$. If $\lambda \in \{2,3\}$, then $\beta_1+\beta_2$ is odd, so $\Big\vert \{ i: \beta_i \mbox{ is odd} \} \Big\vert = 1 = \lfloor \frac{\lambda}{2} \rfloor$; again, Condition (i) holds and the result follows from Theorem~\ref{the:bipartite-Kn}.
It remains to examine the case $n \equiv 0 \pmod{4}$.

{\sc Case 1:} $n \ne 8$ or $(\lambda,\beta_1, \beta_2) \not\in \{ (2,2,5),(3,2,8),(3,5,5) \}$. We seek to find a refinement of $[\beta_1,\beta_2]$ that satisfies Condition (ii) of Theorem~\ref{the:bipartite-Kn}.
To that end, we show that we can write
\vspace{-5mm}
\begin{eqnarray}
\beta_1 =& 3x+2y_1 +\delta_1  \quad \mbox{ and} \nonumber \\
\beta_2 =& 3(\lambda-x)+2y_2 +\delta_2   \label{eq:split}
\end{eqnarray}
for some non-negative integers $x,y_1,y_2$, and $\delta_1,\delta_2 \in \{ 0,1 \}$.
This is clearly possible if $\beta_1+\beta_2 \ge 3\lambda+2$, however, it may not possible if $\beta_1+\beta_2 \le 3\lambda+1$ (for example, when $\beta_1=3x+2$ and $\beta_2=3(\lambda -x -1)+2$).

If $\lambda$ is even, then $\beta_1+\beta_2=\frac{\lambda}{2}(n-1)$, and $\frac{\lambda}{2}(n-1) \ge 3\lambda +2$ is equivalent to $\lambda(n-7) \ge 4$. This clearly holds if $\lambda \ge 4$ or $n \ge 12$. If $\lambda$ is odd, then $\beta_1+\beta_2=\frac{1}{2}(\lambda (n-1)-1)$ and $\frac{1}{2}(\lambda (n-1)-1) \ge 3\lambda +2$ is equivalent to $\lambda(n-7) \ge 5$, which holds if $\lambda \ge 5$ or $n \ge 12$. The remaining case is $n=8$ and $\lambda \in \{ 2,3\}$. In this case $\beta_1+\beta_2=3\lambda+1$, and (\ref{eq:split}) has no desired solutions if and only if $\beta_1, \beta_2 \equiv 2 \pmod{3}$, that is, if and only if $(\lambda,\beta_1, \beta_2)$ is one of the excluded cases.

We conclude that, with our assumption, System (\ref{eq:split}) has a desired solution, and hence $[3^{\la \lambda \ra},2^{\la y_1+y_2\ra},1^{\la \delta_1+\delta_2\ra}]$ (where $y_1+y_2=0$ or $\delta_1+\delta_2=0$ may be the case) is a refinement of $[\beta_1,\beta_2]$. This refinement satisfies Condition (ii) of Theorem~\ref{the:bipartite-Kn} if $\lambda \ge 4$ or $\delta_1+\delta_2\le 1$. Suppose, to the contrary, that $\lambda \in \{ 2,3\}$ with $\delta_1+\delta_2=2$. We thus have $\beta_1+\beta_2=3\lambda +2(y_1+y_2)+2$, so that $\beta_1+\beta_2 \equiv \lambda \pmod{2}$. However, for $\lambda=2$, we have that $\beta_1+\beta_2=n-1$, which is odd, and for $\lambda=3$, we have that $\beta_1+\beta_2=\frac{1}{2}(3(n-1)-1)$, which is even --- a contradiction. Hence $[ \beta_1 ,\beta_2 ]$ has a refinement satisfying Condition (ii) in Theorem~\ref{the:bipartite-Kn}, and so $\lambda K_n$ admits a 2-factorization of type $[T_1^{\la \beta_1 \ra}, T_2^{\la \beta_2 \ra}]$.

{\sc Case 2:} $n=8$ and $(\lambda,\beta_1, \beta_2) \in \{ (2,2,5),(3,2,8),(3,5,5) \}$.  Recall from page~\pageref{HWP-even-n} that $HWP(K_8;T_1,T_2;\alpha,\frac{n-2}{2}-\alpha)$ has a solution for all 2-factor types $T_1$ and $T_2$ and all $\alpha$, except in Case (C8); in particular, it has a solution  for all bipartite 2-factor types $T_1$ and $T_2$. We now use Theorem~\ref{the:layers-HWP}(ii) with  all $\mu_i=1$. We have ${\cal O}=\{ 1,\ldots,\ell\}$ and  $\beta=\lambda$.
It suffices to find non-negative integers $\alpha_1,\ldots,\alpha_{\ell}$ and $x_1,\ldots,x_{\ell}$, as well as $\bar{\alpha} \in \{ 0,\ldots, \lfloor \frac{\lambda }{2} \rfloor \}$, such that $\lambda=\sum_{i=1}^{\ell} x_i$, $\beta_1= \sum_{i=1}^{\ell} x_i\alpha_i+\bar{\alpha}$, and $HWP(K_n;T_1,T_2;\alpha_i,\frac{n-2}{2}-\alpha_i)$ has a solution for all $i$. Such suitable integers are found as follows:
\begin{itemize}
\item for $\beta_1=2$, we take $\ell=2$, $\alpha_1=2$, $\alpha_2=0$, $\bar{\alpha}=0$ $x_1=1$, $x_2=\lambda-1$, and
\item for $\beta_1=5$, we take $\ell=3$, $\alpha_1=3$, $\alpha_2=2$, $\alpha_3=0$, $\bar{\alpha}=0$ $x_1=x_2=1$, $x_3=\lambda-2$.
\end{itemize}
It then follows from Theorem~\ref{the:layers-HWP}(ii) that $HWP(\lambda K_8;T_1,T_2;\beta_1,\beta_2)$ has a solution.
\end{proof}

\section{Bipartite 2-factorizations of complete equipartite multigraphs}\label{bip2facsecII}

We shall now use detachment to extend the results of the previous section to $\lambda K_{n \times m}$ with 2-factors whose cycles have lengths that are even multiples of $m$.

\begin{theo}\label{the:bipartite-Knm}
Let $n$ be a positive even integer  and ${\cal T}=[T_1^{\la \beta_1 \ra},\ldots,T_s^{\la \beta_s \ra}]$ an admissible bipartite 2-factorization type for $\lambda mK_n$, with each $T_i$ an admissible bipartite 2-factor type for $K_n$. Suppose there exists a refinement $[\alpha_1,\ldots,\alpha_{k}]$ of $[\beta_1,\ldots,\beta_s]$ such that one of the following holds:
\begin{enumerate}[(i)]
\item $n \equiv 2 \pmod{4}$ and $\Big\vert \{ i: \alpha_i \mbox{ is odd} \}  \Big\vert \le \lfloor \frac{\lambda m}{2} \rfloor$;
\item $n \equiv 0 \pmod{4}$,  $\Big\vert  \{ i: \alpha_i \mbox{ is odd} \} \Big\vert  \le \lambda m + \lfloor \frac{\lambda m}{2} \rfloor$, and $\Big\vert  \{ i: \alpha_i \mbox{ is odd}, \alpha_i \ge 3 \} \Big\vert  \ge \lambda m $.
\end{enumerate}
Then $\lambda K_{n \times m}$ admits a 2-factorization of type $[(mT_1)^{\la \beta_1 \ra},\ldots,$ $(mT_s)^{\la \beta_s \ra}]$.
\end{theo}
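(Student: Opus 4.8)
The plan is to reduce the theorem to a single application of Theorem~\ref{the:bipartite-Kn} followed by detachment. The key observation is that the hypotheses imposed here --- $n$ even, each $T_i$ an admissible bipartite 2-factor type for $K_n$, and the existence of a refinement $[\alpha_1,\ldots,\alpha_k]$ of $[\beta_1,\ldots,\beta_s]$ satisfying (i) or (ii) --- are \emph{verbatim} the hypotheses of Theorem~\ref{the:bipartite-Kn} with the multiplicity $\lambda$ there replaced by $\lambda m$. (Condition (i), resp. (ii), of the present theorem is condition (i), resp. (ii), of Theorem~\ref{the:bipartite-Kn} read for $\lambda m K_n$.)

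First I would invoke Theorem~\ref{the:bipartite-Kn}, with $\lambda m$ in place of $\lambda$, to conclude that $\lambda m K_n$ admits a 2-factorization of type $\mathcal{T}=[T_1^{\langle \beta_1 \rangle},\ldots,T_s^{\langle \beta_s \rangle}]$; equivalently, it admits a resolvable cycle decomposition $\mathcal{D}$ whose corresponding 2-factorization has type $\mathcal{T}$ (including a single 1-factor precisely when $\lambda m(n-1)$ is odd, i.e. when $\lambda m$ is odd, since $n$ is even).

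Then I would apply detachment. By Theorem~\ref{thm:main}, $\mathcal{D}$ yields a cycle decomposition $\mathcal{D}'$ of $\lambda K_{n\times m}$ with $V(C_i')=\cup_{j\in V(C_i)}V_j$ for every cycle, and Corollary~\ref{cor:direct}(iii) then states precisely that $\mathcal{D}'$ is equivalent to a 2-factorization of $\lambda K_{n\times m}$ of type $[(mT_1)^{\langle\beta_1\rangle},\ldots,(mT_s)^{\langle\beta_s\rangle}]$. That is the asserted conclusion, so the proof is complete.

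No new idea is needed; the only points deserving care are bookkeeping ones. One must check that $\mathcal{T}$, assumed admissible as a bipartite 2-factorization type for $\lambda m K_n$, truly meets the admissibility hypotheses of Theorem~\ref{the:bipartite-Kn} --- immediate, since admissibility of a bipartite 2-factor type for $K_n$ depends only on $n$ and the cycle lengths, while the count $\sum_i \beta_i=\lfloor \lambda m(n-1)/2\rfloor$ is part of admissibility of $\mathcal{T}$ for $\lambda m K_n$. The mildly delicate step is tracking parity: since $n$ even forces $n-1$ odd, the presence of the auxiliary 1-factor is controlled entirely by the parity of $\lambda m$, and one should make sure Corollary~\ref{cor:direct}(iii) is being applied in the right case; but this is already handled inside the proofs of Theorem~\ref{thm:main} and Corollary~\ref{cor:direct}.
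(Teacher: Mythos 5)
Your proof is correct and matches the paper's own argument exactly: both invoke Theorem~\ref{the:bipartite-Kn} with $\lambda m$ in place of $\lambda$ to get the 2-factorization of $\lambda m K_n$ of type ${\cal T}$, and then apply Corollary~\ref{cor:direct}(iii) to transfer it to $\lambda K_{n\times m}$. Your additional remarks on admissibility and the parity of $\lambda m$ are sensible bookkeeping that the paper leaves implicit.
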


\begin{proof}
With the given assumptions, Theorem~\ref{the:bipartite-Kn} guarantees existence of a 2-factorization of $\lambda mK_n$ of type ${\cal T}=[T_1^{\la \beta_1 \ra},\ldots,T_s^{\la \beta_s \ra}]$. Hence by Corollary~\ref{cor:direct}(iii), $\lambda K_{n \times m}$ admits a 2-factorization of type $[(mT_1)^{\la \beta_1 \ra},\ldots,(mT_s)^{\la \beta_s \ra}]$.
\end{proof}

The following new result on the Oberwolfach Problem for complete equipartite multigraphs follows easily from our earlier observations.

\begin{cor}\label{cor:bipartite-Knm-OP}
Let $n$ be even, and $T$ any admissible bipartite 2-factor type for $K_n$. Then $OP(\lambda K_{n \times m};mT)$ has a solution.
\end{cor}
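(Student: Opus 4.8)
The plan is to reduce the statement to \textbf{Corollary~\ref{cor:bipartite-Kn-OP}} (the Oberwolfach Problem for $\lambda K_n$ with a bipartite 2-factor type) via \textbf{Theorem~\ref{thm:OP}}, exactly in the style of the other corollaries in Section~\ref{OPSec}. Recall that $T$ being an admissible bipartite 2-factor type for $K_n$ means $T=[c_1,\ldots,c_t]$ with all $c_i$ even, $c_i\ge 2$, and $\sum_i c_i=n$ (in particular $n$ is even, as assumed). First I would observe that $mT=[mc_1,\ldots,mc_t]$ is then an admissible bipartite 2-factor type for $K_{n\times m}$ (and for $\lambda K_{n\times m}$), since each $mc_i$ is even and $\sum_i mc_i=mn$; this is exactly the kind of type $OP(\lambda K_{n\times m};mT)$ concerns, so the statement makes sense.

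The core of the argument is just two lines. First, $T$ is an admissible bipartite 2-factor type for $K_n$, and $n$ is even, so \textbf{Corollary~\ref{cor:bipartite-Kn-OP}}, applied with the multiplicity parameter $\lambda m$ in place of $\lambda$, guarantees that $OP(\lambda m K_n;T)$ has a solution; that is, $\lambda m K_n$ admits a 2-factorization of type $[T^\ast]$. Second, \textbf{Theorem~\ref{thm:OP}} states that whenever $OP(\lambda m K_n;T)$ has a solution, so does $OP(\lambda K_{n\times m};mT)$. Chaining these two facts gives the claim immediately.

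One small point worth checking explicitly in the write-up is the edge case $n=4$ (when $n\equiv 0\pmod 4$): there $T$ is forced to be the uniform type $[4]$, but Corollary~\ref{cor:bipartite-Kn-OP} already handles $n=4$ directly (via \cite{MR1468840}), so nothing special is needed here — the hypothesis ``$n$ even'' is all we require, and Corollary~\ref{cor:bipartite-Kn-OP} covers every even $n$ uniformly. There is no real obstacle in this proof: it is a pure composition of \textbf{Corollary~\ref{cor:bipartite-Kn-OP}} (a substantive result, resting on Theorem~\ref{the:bipartite-Kn} and hence on layering plus \cite{MR821524,MR2833961}) with the detachment transfer \textbf{Theorem~\ref{thm:OP}}. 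The only thing to be careful about is bookkeeping: making sure the multiplicity in the hypothesis of Corollary~\ref{cor:bipartite-Kn-OP} is instantiated as $\lambda m$ so that Theorem~\ref{thm:OP} can then be applied with parameters $\lambda$ and $m$. I would present this as a two-sentence proof, mirroring the proofs of the earlier corollaries in Section~\ref{OPSec}.

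\begin{proof}
If $T$ is an admissible bipartite 2-factor type for $K_n$ (with $n$ even), then by Corollary~\ref{cor:bipartite-Kn-OP} applied with multiplicity $\lambda m$, the graph $\lambda m K_n$ admits a 2-factorization of type $[T^\ast]$; that is, $OP(\lambda m K_n; T)$ has a solution. Hence, by Theorem~\ref{thm:OP}, $OP(\lambda K_{n \times m}; mT)$ has a solution.
\end{proof}
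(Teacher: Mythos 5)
Your proof is correct and is essentially identical to the paper's own: it applies Corollary~\ref{cor:bipartite-Kn-OP} with multiplicity $\lambda m$ to get a solution of $OP(\lambda m K_n;T)$ and then invokes the detachment transfer of Theorem~\ref{thm:OP}. The extra remarks about admissibility of $mT$ and the $n=4$ case are harmless bookkeeping that the paper omits.
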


\begin{proof}
By Corollary~\ref{cor:bipartite-Kn-OP}, $OP(\lambda m K_n;T)$ has a solution, and hence by Theorem~\ref{thm:OP}, $OP(\lambda K_{n \times m};mT)$ has a solution.
\end{proof}

For $m$ even, a more general result is obtained as a corollary to a theorem by Bryant and Danziger \cite{MR3312138}.

\begin{cor}\label{cor:bipartite-Knm-OP2}
Let $m$ be even, and $T$ any admissible bipartite 2-factor type for $K_{n \times m}$ such that $(m,n,T) \ne (6,2,[6,6])$. Then $OP(\lambda K_{n \times m};T)$ has a solution.
\end{cor}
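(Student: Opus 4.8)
The plan is to reduce the statement for general $\lambda$ to the known case $\lambda=1$ by layering, exactly as in Corollary~\ref{cor:bipartite-Knm-OP}. The hypothesis cites a theorem of Bryant and Danziger \cite{MR3312138}; I would first invoke that result (for the $\lambda=1$ case) to conclude that $OP(K_{n\times m};T)$ has a solution for every admissible bipartite 2-factor type $T$ for $K_{n\times m}$, provided $(m,n,T)\ne(6,2,[6,6])$. The goal is then to upgrade this to a solution of $OP(\lambda K_{n\times m};T)$ for all $\lambda\ge 1$.

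The natural route is to pass through the complete multigraph: by Corollary~\ref{cor:direct} (or Theorem~\ref{thm:OP}), if $OP(\lambda m K_n;T')$ has a solution then $OP(\lambda K_{n\times m};mT')$ has a solution, so it would suffice to realize $T$ as $mT'$ for some admissible bipartite $T'$ for $K_n$ and then produce a 2-factorization of $\lambda m K_n$ of type $[T'^\ast]$. However, the whole point of the hypothesis $m$ even is that we want to allow 2-factor types $T$ of $K_{n\times m}$ whose cycle lengths are \emph{not} multiples of $m$ — this is precisely why we must use \cite{MR3312138} rather than Corollary~\ref{cor:bipartite-Knm-OP}. So the detachment-from-$\lambda m K_n$ approach is not available in general, and instead I would layer directly at the level of the equipartite multigraph.

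Here is the key point. Since $m$ is even, $K_{n\times m}$ is regular of even degree $m(n-1)$, so it has \emph{no} leftover 1-factor, and the simplest form of layering applies: by Lemma~\ref{lem:basic-layers}, taking $\lambda$ disjoint copies of a 2-factorization of $K_{n\times m}$ of type $[T^\ast]$ (with $\lfloor m(n-1)/2\rfloor = m(n-1)/2$ factors) yields a 2-factorization of $\lambda K_{n\times m}$ of type $[T^\ast]$ with $\lambda m(n-1)/2$ factors — which is exactly a solution of $OP(\lambda K_{n\times m};T)$. Thus the proof is: apply \cite{MR3312138} to get the $\lambda=1$ case (noting $T$ being admissible bipartite for $K_{n\times m}$ with $(m,n,T)\ne(6,2,[6,6])$ is exactly their hypothesis), then apply Lemma~\ref{lem:basic-layers} with $\mu=\lambda$ and $k = m(n-1)/2$ to obtain the general case.

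I expect no serious obstacle here; the only thing to be careful about is the bookkeeping that $m$ even forces $m(n-1)$ even so that $[T^\ast]$ genuinely has $m(n-1)/2$ entries and Lemma~\ref{lem:basic-layers} applies verbatim (no 1-factor to worry about, unlike the odd-degree situation that motivated the more elaborate Theorem~\ref{the:layers}). One should also double-check that the excluded triple $(6,2,[6,6])$ is the \emph{only} exception inherited from \cite{MR3312138} and that it is stated identically in their theorem; assuming so, the corollary follows immediately.
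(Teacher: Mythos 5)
Your proposal is correct and matches the paper's proof essentially verbatim: the paper also cites \cite[Theorem 12]{MR3312138} for the $\lambda=1$ case and then notes that, since $K_{n\times m}$ has even degree when $m$ is even, Lemma~\ref{lem:basic-layers} immediately lifts the solution to $\lambda K_{n\times m}$. Your additional remarks about why detachment from $\lambda m K_n$ is unavailable here are accurate but not needed for the argument.
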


\begin{proof}
By \cite[Theorem 12]{MR3312138}, $OP(K_{n \times m};T)$ with $m$ even has a solution for every admissible bipartite 2-factor type $T$, except that $OP(K_{2 \times 6};[6,6])$ has no solution. With our assumptions, since $K_{n \times m}$ is of even degree, it follows from Lemma~\ref{lem:basic-layers} that $OP(\lambda K_{n \times m};T)$ has a solution.
\end{proof}

As far as we know, no results are known for the Hamilton-Waterloo Problem for the complete equipartite (multi)graph with  bipartite 2-factors. We obtain the following.

\begin{cor}\label{cor:bipartite-Knm-HWP}
Let $T_1,T_2$ be any distinct admissible bipartite 2-factor types for $K_n$. Then $HWP(\lambda K_{n \times m};mT_1,mT_2;\beta_1,\beta_2)$  has a solution for all positive integers $\beta_1,\beta_2$ such that
\begin{itemize}
\item $\beta_1+\beta_2=\lfloor \frac{\lambda m(n-1)}{2} \rfloor$; and
\item if $\lambda m=1$, then at most one of $\beta_1,\beta_2$ is odd and $1 \not\in \{ \beta_1,\beta_2\}$.
\end{itemize}
\end{cor}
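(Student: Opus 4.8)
The plan is to reduce the complete-equipartite-multigraph statement to the complete-multigraph statement already established in Corollary~\ref{cor:bipartite-Kn-HWP}, and then transport it via the detachment machinery packaged in Theorem~\ref{thm:HWP}. Concretely, I would first produce the desired 2-factorization of $\lambda m K_n$ and then ``blow up'' each vertex into a part of size $m$.

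\textbf{Step 1 (the complete multigraph).} Apply Corollary~\ref{cor:bipartite-Kn-HWP} with the fold $\lambda$ there replaced by $\lambda m$, to the same pair of distinct admissible bipartite 2-factor types $T_1,T_2$ for $K_n$. The graph $\lambda m K_n$ is regular of degree $\lambda m(n-1)$, so the admissibility requirement $\beta_1+\beta_2=\lfloor \frac{\lambda m(n-1)}{2}\rfloor$ in that corollary is exactly our hypothesis, and the degenerate clause ``if $\lambda m=1$ then at most one of $\beta_1,\beta_2$ is odd and $1\notin\{\beta_1,\beta_2\}$'' is precisely what we are assuming. Hence Corollary~\ref{cor:bipartite-Kn-HWP} gives a solution to $HWP(\lambda m K_n;T_1,T_2;\beta_1,\beta_2)$, i.e.\ a 2-factorization of $\lambda m K_n$ of type $[T_1^{\la \beta_1 \ra},T_2^{\la \beta_2 \ra}]$.

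\textbf{Step 2 (detachment).} Now invoke Theorem~\ref{thm:HWP}: since $HWP(\lambda m K_n;T_1,T_2;\beta_1,\beta_2)$ has a solution, so does $HWP(\lambda K_{n\times m};mT_1,mT_2;\beta_1,\beta_2)$. As $\lambda K_{n\times m}$ is again regular of degree $\lambda m(n-1)$, the identity $\beta_1+\beta_2=\lfloor \frac{\lambda m(n-1)}{2}\rfloor$ is also the correct value of $\lfloor \frac{r}{2}\rfloor$ here, so this is exactly the assertion of the corollary and we are done.

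\textbf{Main obstacle.} There is no substantive obstacle: the argument is a direct composition of two previously established results. The only point requiring care is verifying that the hypotheses transfer faithfully, in particular that the single parameter identity $\beta_1+\beta_2=\lfloor \frac{\lambda m(n-1)}{2}\rfloor$ serves simultaneously as the admissibility condition for $\lambda m K_n$ in Step~1 and for $\lambda K_{n\times m}$ in Step~2, and that the $\lambda m=1$ exceptional clause is stated exactly as Corollary~\ref{cor:bipartite-Kn-HWP} requires. (One may note in passing that $mT_1\neq mT_2$ whenever $T_1\neq T_2$, although distinctness of the two factor types is not actually needed for the statement.)
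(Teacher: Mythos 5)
Your proposal is correct and coincides with the paper's own proof: the paper likewise applies Corollary~\ref{cor:bipartite-Kn-HWP} to $\lambda m K_n$ and then transfers the resulting 2-factorization to $\lambda K_{n\times m}$ via Theorem~\ref{thm:HWP}. The hypothesis-matching checks you note (the shared identity $\beta_1+\beta_2=\lfloor \lambda m(n-1)/2\rfloor$ and the $\lambda m=1$ clause) are exactly what makes the two-step composition go through.
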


\begin{proof}
Let $\beta_1,\beta_2$ be any positive integers satisfying the conditions of the corollary. Then $HWP(\lambda m K_{n};T_1,T_2;\beta_1,\beta_2)$ has a solution by Corollary~\ref{cor:bipartite-Kn-HWP}, and hence $HWP(\lambda K_{n \times m};mT_1,$ $mT_2;\beta_1,\beta_2)$  has a solution by Theorem~\ref{thm:HWP}.
\end{proof}

\section{2-factorizations of $\lambda K_n$ and $\lambda K_{n \times m}$ for small even $n$}\label{sec:new-gen}

For $n$ odd, many 2-factorizations of $\lambda K_n$ of specified types can be obtained by applying Lemma~\ref{lem:basic-layers}  to the 2-factorizations of $K_n$ guaranteed to exist by \cite[Theorem 12.31]{MR2246267}. However, these results are rather obvious, so we shall not state them here explicitly. Less obvious are  2-factorizations of $\lambda K_n$, for $n$ even, obtained by applying Theorem~\ref{the:layers} to \cite[Theorem 12.33]{MR2246267}. In this section, we explicitly state and prove the results obtained for small even $n$.

\begin{cor}\label{cor:layers-small-even-n}
Let $n \in \{4,6,8,10\}$, and let $\lambda$ and $y$ be integers with $\lambda \ge 1$ and $0 \le y \le \lfloor \frac{\lambda}{2} \rfloor$. Furthermore, let $T'_1,\ldots,T'_{y}$ be any bipartite 2-factor types that are admissible for $2K_n$ but not for $K_n$.
\begin{enumerate}[(i)]
\item If $n=4$, then $\lambda K_n$ admits a 2-factorization of type
$\Big[ [4]^{\la a \ra},T'_1,\ldots,T'_{y} \Big]$ for all non-negative integers $a$ such that $a+y=\lfloor \frac{\lambda(n-1)}{2} \rfloor$.
\item If $n=6$, then $\lambda K_n$ admits a 2-factorization of type
$\Big[ [6]^{\la a \ra}, [3,3]^{\la b \ra}, T'_1,\ldots,T'_{y}\Big]$ for all  non-negative integers $a,b$ such that
    \begin{itemize}
    \item $a+b+y=\lfloor \frac{\lambda(n-1)}{2} \rfloor$, and
    \item $b \le \lambda$.
    \end{itemize}
\item If $n=8$, then $\lambda K_n$ admits a 2-factorization of type
$\Big[ [8]^{\la a \ra}, [5,3]^{\la b \ra},[4,4]^{\la c \ra}, T'_1,\ldots,T'_{y} \Big]$ for all  non-negative integers $a,b,c$ such that
    \begin{itemize}
    \item $a+b+c+y=\lfloor \frac{\lambda(n-1)}{2} \rfloor$,
    \item $a+c+y \ge \lfloor \frac{\lambda}{2} \rfloor$, and
    \item if $a=0$, then $c+y \equiv \lfloor \frac{\lambda}{2} \rfloor \pmod{3}$.
    \end{itemize}
 \item If $n=10$, then $\lambda K_n$ admits a 2-factorization of type
$\Big[ [10]^{\la a \ra}, [7,3]^{\la b \ra},[6,4]^{\la c \ra},[5,5]^{\la d \ra},$ $[4,3,3]^{\la e \ra},$ $ T'_1,\ldots,T'_{y} \Big]$ for all  non-negative integers $a,b,c,d,e$ such that
    \begin{itemize}
    \item $a+b+c+d+e+y=\lfloor \frac{\lambda(n-1)}{2} \rfloor$ and
    \item $a+c+y \ge \lfloor \frac{\lambda}{2} \rfloor$.
    \end{itemize}
\end{enumerate}
\end{cor}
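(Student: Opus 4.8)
The plan is to invoke the layering theorem, Theorem~\ref{the:layers}, with every $\mu_i=1$. Since $n$ is even, $\mu_i(n-1)=n-1$ is odd, so ${\cal O}=\{1,\ldots,\ell\}$; choosing $(x_1,\ldots,x_\ell)\in\NN^\ell$ with $\sum_i x_i=\lambda$ then gives $\beta=\lambda$ and $k_i=\frac{n-2}{2}$ for all $i$. So Theorem~\ref{the:layers} says: if $K_n$ admits a 2-factorization of type $[T_{i,1},\ldots,T_{i,k_i}]$ for each $i$, then $\lambda K_n$ admits a 2-factorization of type $\sqcup_{i=1}^{\ell}[T_{i,1}^{\la x_i\ra},\ldots,T_{i,k_i}^{\la x_i\ra}]\sqcup[S_1,\ldots,S_{\lfloor\lambda/2\rfloor}]$ for every choice of bipartite 2-factor types $S_1,\ldots,S_{\lfloor\lambda/2\rfloor}$ admissible for $2K_n$. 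Thus, for each target type in the statement, it suffices to produce (a) a sub-multiset of size $\lfloor\lambda/2\rfloor$ of the target consisting of bipartite types admissible for $2K_n$ and containing all of $T'_1,\ldots,T'_y$ (these become the $S_j$), and (b) a partition of the remaining $\lambda\,\frac{n-2}{2}$ 2-factors into $\lambda$ blocks of $\frac{n-2}{2}$ factors each, every block being a 2-factorization type of $K_n$. The ingredient for (b) is the classification of the 2-factorization types of $K_n$ for $n\in\{4,6,8,10\}$ supplied by \cite[Theorem~12.33]{MR2246267} together with the small-order Oberwolfach and Hamilton-Waterloo results recalled earlier; in particular: $K_4$ has only $[[4]]$; $K_6$ has only $[[6],[6]]$ and $[[6],[3,3]]$ (because $OP(K_6;3,3)$ has no solution); $K_8$ realizes every triple from $\{[8],[5,3],[4,4]\}$ except those using only $[5,3]$'s and $[4,4]$'s with one or two $[4,4]$'s (Case~(C8)); and $K_{10}$ admits a 2-factorization of every admissible type.

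To carry this out, place $T'_1,\ldots,T'_y$ into $y$ of the $\lfloor\lambda/2\rfloor$ free slots (possible since $y\le\lfloor\lambda/2\rfloor$) and fill the remaining $\lfloor\lambda/2\rfloor-y$ slots with copies of $[n]$ and, for $n=8$ also $[4,4]$, for $n=10$ also $[6,4]$ (all bipartite and admissible for $K_n$), in numbers chosen below. For $n=4$: the $\lambda$ copies of $K_4$ contribute $\lambda$ factors $[4]$, and $a+y=\lfloor\lambda(n-1)/2\rfloor$ forces a non-negative number of $[4]$'s into the free slots. For $n=6$: take $b$ copies of $K_6$ of type $[[6],[3,3]]$ and $\lambda-b$ of type $[[6],[6]]$ (legitimate exactly because $b\le\lambda$); then $a+b+y=\lfloor\lambda(n-1)/2\rfloor$ fixes the number of $[6]$'s placed in free slots. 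For $n=10$: by $a+c+y\ge\lfloor\lambda/2\rfloor$ we may move $p\le a$ copies of $[10]$ and $q\le c$ copies of $[6,4]$ to the free slots with $p+q=\lfloor\lambda/2\rfloor-y$; the remaining $4\lambda$ factors then split into $\lambda$ blocks of four in any fashion, since $K_{10}$ realizes every such type.

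The case $n=8$ is the crux and is where the arithmetic hypotheses are genuinely needed. After moving $p\le a$ copies of $[8]$ and $q\le c$ copies of $[4,4]$ into the free slots (with $p+q=\lfloor\lambda/2\rfloor-y$, feasible by $a+c+y\ge\lfloor\lambda/2\rfloor$), the $K_8$-copies must carry $(a-p)$ $[8]$'s, $b$ $[5,3]$'s and $(c-q)$ $[4,4]$'s in $\lambda$ triples, none equal to $[[5,3]^{\la 2\ra},[4,4]]$ or $[[5,3],[4,4]^{\la 2\ra}]$. If $a=0$, then $p=0$ is forced, $q=\lfloor\lambda/2\rfloor-y$, the only legal triples are $[[5,3]^3]$ and $[[4,4]^3]$, and feasibility is exactly $3\mid(c-q)$, i.e.\ $c+y\equiv\lfloor\lambda/2\rfloor\pmod 3$. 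If $a\ge1$: when $a+c+y=\lfloor\lambda/2\rfloor$ take $p=a$, $q=c$, so every copy is $[[5,3]^3]$ and $b=3\lambda$; otherwise keep at least one $[8]$ among the copies, put it in a triple $[[8],[5,3]^{\la u\ra},[4,4]^{\la 2-u\ra}]$ with $u\in\{0,1,2\}$ and $u\equiv b\pmod 3$, and finish with blocks $[[5,3]^3]$ and $[[4,4]^3]$. I expect this $n=8$ bookkeeping — deciding which bipartite factors are forced into the layering slots rather than into the $K_8$-copies, and checking that $a+c+y\ge\lfloor\lambda/2\rfloor$ and (when $a=0$) $c+y\equiv\lfloor\lambda/2\rfloor\pmod 3$ are precisely the conditions making the triple-decomposition feasible — to be the main obstacle; the rest is routine counting and an appeal to the cited classification.
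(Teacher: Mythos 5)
Your proposal is correct and follows essentially the same route as the paper: apply Theorem~\ref{the:layers} with all $\mu_i=1$ (so ${\cal O}=\{1,\ldots,\ell\}$ and $\beta=\lambda$), fill the $\lfloor\frac{\lambda}{2}\rfloor$ free bipartite slots with $T'_1,\ldots,T'_y$ plus extra copies of $[n]$, $[4,4]$, or $[6,4]$, and partition the remaining 2-factors into $\lambda$ blocks realizable by \cite[Theorem 12.33(1)]{MR2246267}, with the only real work being the $n=8$ triple bookkeeping. Your prescribed mixed triple $\bigl[[8],[5,3]^{\la u\ra},[4,4]^{\la 2-u\ra}\bigr]$ is not always formable (e.g.\ when too few $[4,4]$'s remain among the copies) and the leftover $[8]$'s are not explicitly placed, but these edge cases are harmless since any triple containing an $[8]$, or avoiding $[5,3]$ altogether, is legal, so the argument matches the paper's in substance.
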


\begin{proof}
By Theorem~\ref{the:layers}, if $K_n$ admits a 2-factorization of type ${\cal T}_i$ for all $i=1,\ldots,\lambda$, then $\lambda K_n$ admits a 2-factorization of type $\sqcup_{i=1}^{\lambda} {\cal T}_i \sqcup [T_1,\ldots,T_{\lfloor \frac{\lambda}{2} \rfloor}]$ for all admissible bipartite 2-factor types $T_1,\ldots,T_{\lfloor \frac{\lambda}{2} \rfloor}$ for $2K_n$. We shall apply this simplified statement to the results of \cite[Theorem 12.33(1)]{MR2246267}. In all cases we assume that parameters $y,a,b,  \ldots$ satisfy the stated conditions. In each case, we
let $T_i=T'_i$ for $i=1,\ldots,y$, and we find appropriate  $T_{y+1},\ldots,T_{\lfloor \frac{\lambda}{2} \rfloor}$ and $\T_1,\ldots,\T_{\lambda}$ below so that the resulting 2-factorization of $\lambda K_n$ is of the required type.

\begin{enumerate}[(i)]
\item Let $n=4$. Clearly, $K_4$ admits a 2-factorization of type $\Big[ [4] \Big]$. Hence we can take $T_i=[4]$ for $i=y+1,\ldots,\lfloor \frac{\lambda}{2} \rfloor$, and ${\cal T}_i=\Big[ [4] \Big]$ for all $i=1,\ldots,\lambda$. Since by assumption, $\lambda + \lfloor \frac{\lambda}{2} \rfloor -y =a$, we obtain a 2-factorization of the required type.

\item Let $n=6$. By \cite[Theorem 12.33(1)]{MR2246267}, $K_6$ admits a 2-factorization of type $\Big[ [6]^{\la s \ra}, [3,3]^{\la t \ra} \Big]$ for all non-negative integers $s,t$ with $s+t=2$ with the definite exception of $(s,t)=(0,2)$. We now apply the simplified statement of Theorem~\ref{the:layers} as stated above. Since $b \le \lambda$, we have $a+y \ge \lfloor \frac{\lambda}{2} \rfloor$, so we can take $T_i=[6]$ for $i=y+1,\ldots,\lfloor \frac{\lambda}{2} \rfloor$. Moreover, we can choose ${\cal T}_i=\Big[ [6],[3,3] \Big]$ for $i=1,\ldots,b$, and ${\cal T}_i=\Big[ [6],[6] \Big]$ for $i=b+1,\ldots,\lambda$. It can be easily verified that the resulting 2-factorization for $\lambda K_6$ contains exactly $(\lfloor \frac{\lambda}{2} \rfloor-y)+b+2(\lambda-b)=
    \lfloor \frac{\lambda (n-1)}{2} \rfloor -b-y=a$ 2-factors of type $[6]$.

\item Let $n=8$. By \cite[Theorem 12.33(1)]{MR2246267}, $K_8$ admits a 2-factorization of type $\Big[ [8]^{\la s \ra}, [5,3]^{\la t \ra},$ $[4,4]^{\la r \ra}  \Big]$ for all non-negative integers $s,t,r$ with $s+t+r=3$ with the definite exceptions of $(s,t,r)=(0,1,2)$ and $(0,2,1)$.
    Let $c'=\min\{ c, \lfloor \frac{\lambda}{2} \rfloor - y \}$ and $a'=\lfloor \frac{\lambda}{2} \rfloor - (y+c')$.
    We take $T_i = [4,4]$ for $i=y+1,\ldots,y+c'$, and $T_i=[8]$ for $i=y+c'+1,\ldots,\lfloor \frac{\lambda}{2} \rfloor$. We now have $a-a'$ copies of the 2-factor type $[8]$, $b$ copies of the 2-factor type $[5,3]$, and $c-c'$ copies of the 2-factor type $[4,4]$ to distribute among the $\lambda$ copies of $K_{8}-I$. If $b=0$, this can be done arbitrarily. Otherwise, let $c-c'=3k+\delta$ for $\delta \in \{ 0,1,2 \}$. Take ${\cal T}_i=\Big[[4,4]^{\la 3 \ra} \Big]$ for $i=1,\ldots,k$. If $\delta=0$, then the remaining 2-factor types can be arbitrarily distributed among the $\lambda-k$ copies of $K_{8}-I$. Hence suppose $\delta \ne 0$. Then $c-c' \ne 0$, so $c'=\lfloor \frac{\lambda}{2} \rfloor - y$ and $a'=0$. If $a=0$, then by the assumption $c-c'=c+y-\lfloor \frac{\lambda}{2} \rfloor \equiv 0 \pmod{3}$, so $\delta=0$, a contradiction. Hence $a \ge 1$, and we may take ${\cal T}_{k+1}=\Big[ [8],[5,3]^{\la 2-\delta \ra},[4,4]^{\la \delta \ra} \Big]$. Finally, we distribute the remaining $a-1$ copies of 2-factor type $[8]$ and $b-(2-\delta)$ copies of 2-factor type $[5,3]$ arbitrarily among the remaining $\lambda-(k+1)$ copies of $K_{8}-I$.

\item Let $n=10$. By \cite[Theorem 12.33(1)]{MR2246267}, $K_{10}$ admits a 2-factorization of type $\Big[ [10]^{\la s \ra}, [7,3]^{\la t \ra},$ $[6,4]^{\la r \ra},[5,5]^{\la q \ra},[4,3,3]^{\la u \ra} \Big]$ for all non-negative integers $s,t,r,q,u$ with $s+t+r+q+u=4$. We thus take  $T_i \in \{[10],[6,4] \}$ for $i=y+1,\ldots,\lfloor \frac{\lambda}{2} \rfloor$. The remaining 2-factor types can be arbitrarily distributed among the $\lambda$ copies of $K_{10}-I$.
\end{enumerate}
\end{proof}

Analogous results for $\lambda K_{n \times m}$ with small even $n$ (Corollaries~\ref{cor:layers-small-even-nm} and \ref{cor:layers-small-even-nm-2} below) are obtained by combining Corollary~\ref{cor:layers-small-even-n} with Corollary~\ref{cor:layers} and Theorem~\ref{the:layers2}, respectively. The latter requires more work, but for $m$ odd, it allows us to obtain some 2-factors with cycles of length not a multiple of $m$.

\begin{cor}\label{cor:layers-small-even-nm}
Let $n \in \{4,6,8,10\}$, and let $\mu$, $m$,  and $y$ be integers with $\mu \ge 1$, $m \ge 2$, and $0 \le y \le \lfloor \frac{\mu m}{2} \rfloor$. Furthermore, let $T'_1,\ldots,T'_{y}$ be any bipartite 2-factor types that are admissible for $2K_n$ but not for $K_n$.
\begin{enumerate}[(i)]
\item If $n=4$, then $\mu K_{n \times m}$ admits a 2-factorization of type
$\Big[ [4m]^{\la a \ra}, mT'_1,\ldots,mT'_{y} \Big]$ for all non-negative integers $a$ such that $a+y=\lfloor \frac{\mu m(n-1)}{2} \rfloor$.

\item If $n=6$, then $\mu K_{n \times m}$ admits a 2-factorization of type
$\Big[ [6m]^{\la a \ra}, [3m,3m]^{\la b \ra}, mT'_1,\ldots,mT'_{y} \Big]$ for all  non-negative integers $a,b$ such that
    \begin{itemize}
    \item $a+b+y=\lfloor \frac{\mu m(n-1)}{2} \rfloor$, and
    \item $b \le \mu m$.
    \end{itemize}
\item If $n=8$, then $\mu K_{n \times m}$ admits a 2-factorization of type
$\Big[ [8m]^{\la a \ra}, [5m,3m]^{\la b \ra},[4m,4m]^{\la c \ra},$ $ mT'_1,\ldots,mT'_{y} \Big]$ for all  non-negative integers $a,b,c$ such that
    \begin{itemize}
    \item $a+b+c+y=\lfloor \frac{\mu m(n-1)}{2} \rfloor$,
    \item $a+c+y \ge \lfloor \frac{\mu m}{2} \rfloor$, and
    \item if $a=0$, then $c+y \equiv \lfloor \frac{\mu m}{2} \rfloor \pmod{3}$.
    \end{itemize}
\item If $n=10$, then $\mu K_{n \times m}$ admits a 2-factorization of type
$\Big[ [10m]^{\la a \ra}, [7m,3m]^{\la b \ra},[6m,4m]^{\la c \ra},$ $[5m,5m]^{\la d \ra},[4m,3m,3m]^{\la e \ra}, mT'_1,\ldots,mT'_{y} \Big]$ for all  non-negative integers $a,b,c,d,e$ such that
    \begin{itemize}
    \item $a+b+c+d+e+y=\lfloor \frac{\mu m(n-1)}{2} \rfloor$ and
    \item $a+c+y \ge \lfloor \frac{\mu m}{2} \rfloor$.
    \end{itemize}
\end{enumerate}
\end{cor}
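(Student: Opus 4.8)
The plan is to derive Corollary~\ref{cor:layers-small-even-nm} directly from Corollary~\ref{cor:layers-small-even-n} together with the detachment machinery packaged in Corollary~\ref{cor:direct}(iii). The key observation is that Corollary~\ref{cor:layers-small-even-n}, applied with $\lambda$ replaced by $\mu m$ and the same value of $y$, already produces the 2-factorizations of $\mu m K_n$ whose types, under the substitution $T \mapsto mT$, become exactly the types required here for $\mu K_{n\times m}$. So the entire statement reduces to one instance of ``layer first, then detach.''

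First I would fix the parameters: let $n\in\{4,6,8,10\}$, let $\mu\ge 1$, $m\ge 2$, and $0\le y\le\lfloor\frac{\mu m}{2}\rfloor$, and let $T_1',\dots,T_y'$ be bipartite 2-factor types admissible for $2K_n$ but not for $K_n$. Set $\Lambda=\mu m$. One checks immediately that $\lfloor\frac{\Lambda(n-1)}{2}\rfloor=\lfloor\frac{\mu m(n-1)}{2}\rfloor$, so the numerical side-conditions on $a$ (and on $a,b$, or $a,b,c$, or $a,b,c,d,e$, depending on the case) that appear in the present statement coincide verbatim with the conditions in Corollary~\ref{cor:layers-small-even-n} with $\lambda=\Lambda$; likewise $\lfloor\frac{\Lambda}{2}\rfloor=\lfloor\frac{\mu m}{2}\rfloor$, so the conditions $a+c+y\ge\lfloor\frac{\mu m}{2}\rfloor$ and the congruence $c+y\equiv\lfloor\frac{\mu m}{2}\rfloor\pmod 3$ match as well. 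Since $T_1',\dots,T_y'$ are admissible bipartite types for $2K_n$ that are not admissible for $K_n$, they are exactly the kind of ``extra'' types that Corollary~\ref{cor:layers-small-even-n} allows to be filled in. Hence, for the appropriate case $(i)$--$(iv)$, Corollary~\ref{cor:layers-small-even-n} applied to $\Lambda K_n=\mu m K_n$ guarantees a 2-factorization of $\mu m K_n$ of the corresponding type, e.g.\ in case $n=8$ a 2-factorization of type $\big[[8]^{\la a\ra},[5,3]^{\la b\ra},[4,4]^{\la c\ra},T_1',\dots,T_y'\big]$.

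Next I would apply Corollary~\ref{cor:direct}(iii) with $\lambda=\mu$: since $\mu m K_n$ has a 2-factorization of the stated type, $\mu K_{n\times m}$ has a 2-factorization of the type obtained by multiplying every cycle length (equivalently, every 2-factor type) by $m$. Thus $[8]\mapsto[8m]$, $[5,3]\mapsto[5m,3m]$, $[4,4]\mapsto[4m,4m]$, and $T_i'\mapsto mT_i'$, yielding precisely the asserted type for $\mu K_{n\times m}$; the same substitution handles the other three cases. This completes the argument in each case, so the proof is essentially a bookkeeping check: verify that the floor identities line up and that the ``admissible for $2K_n$ but not $K_n$'' hypothesis on the $T_i'$ transfers correctly.

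The only place requiring any care — and the step I expect to be the mild obstacle — is confirming that the exceptional/side conditions transport without loss. In particular, for $n=8$ one must check that the condition ``if $a=0$ then $c+y\equiv\lfloor\frac{\mu m}{2}\rfloor\pmod 3$'' is genuinely inherited from the $n=8$ clause of Corollary~\ref{cor:layers-small-even-n} with $\lambda=\mu m$, and that no new obstruction is introduced by detachment (it is not, since Corollary~\ref{cor:direct}(iii) is unconditional). Everything else is a direct substitution, so I would state the proof compactly: apply Corollary~\ref{cor:layers-small-even-n} to $\mu m K_n$ with the given $y$ and $T_i'$, then apply Corollary~\ref{cor:direct}(iii) to pass to $\mu K_{n\times m}$, checking in each of the four cases that the numerical hypotheses are identical under $\lambda\leftrightarrow\mu m$.
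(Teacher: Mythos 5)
Your proposal is correct and matches the paper's proof exactly: the paper also obtains these results by applying Corollary~\ref{cor:layers-small-even-n} with $\lambda=\mu m$ and then passing to $\mu K_{n\times m}$ via Corollary~\ref{cor:direct}(iii). The bookkeeping checks you describe (floor identities, transport of the side conditions) are the whole content of the argument, and they go through as you say.
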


\begin{proof}
The results are obtained by first applying Corollary~\ref{cor:layers-small-even-n} with $\lambda =\mu m$, and then Corollary~\ref{cor:direct}(iii).
\end{proof}


\begin{cor}\label{cor:layers-small-even-nm-2}
Let $n \in \{4,6,8,10\}$, let $\beta$ and $\mu$ be  integers such that $1 \le \beta \le \mu$ and $\beta \equiv \mu \pmod{2}$, let $m \ge 3$ be an odd integer,  and let $y$ be an integer with  $0 \le y \le \lfloor \frac{\mu m}{2} \rfloor-\lfloor \frac{\beta}{2} \rfloor$. Furthermore, let $T'_1,\ldots,T'_{y}$ be any bipartite 2-factor types that are admissible for $2K_n$ but not for $K_n$, and let $T_1,\ldots,T_{\lfloor \frac{\beta}{2} \rfloor}$
be any admissible bipartite 2-factor types for $2K_{n \times m}$ that are refinements of $[ (2m)^{\la \frac{n}{2} \ra} ]$.
\begin{enumerate}[(i)]
\item If $n=4$, then $\mu K_{n \times m}$ admits a 2-factorization of type
$\Big[ [4m]^{\la a \ra}, mT'_1,\ldots,mT'_{y}, T_1,\ldots,T_{\lfloor \frac{\beta}{2} \rfloor} \Big]$ for all non-negative integers $a$ such that $a+y=\lfloor \frac{\mu m(n-1)}{2} \rfloor - \lfloor \frac{\beta}{2} \rfloor$.
\item If $n=6$, then $\mu K_{n \times m}$ admits a 2-factorization of type
$\Big[ [6m]^{\la a \ra}, [3m,3m]^{\la b \ra}, mT'_1,\ldots,mT'_{y},$ $ T_1,\ldots,T_{\lfloor \frac{\beta}{2} \rfloor} \Big]$ for all  non-negative integers $a,b$ such that
    \begin{itemize}
    \item $a+b+y=\lfloor \frac{\mu m(n-1)}{2} \rfloor - \lfloor \frac{\beta}{2} \rfloor$, and
    \item $b \le \mu m$.
    \end{itemize}
\item If $n=8$, then $\mu K_{n \times m}$ admits a 2-factorization of type
$\Big[ [8m]^{\la a \ra}, [5m,3m]^{\la b \ra},[4m,4m]^{\la c \ra},$ $ mT'_1,\ldots,mT'_{y}, T_1,\ldots,T_{\lfloor \frac{\beta}{2} \rfloor} \Big]$ for all  non-negative integers $a,b,c$ such that
    \begin{itemize}
    \item $a+b+c+y=\lfloor \frac{\mu m(n-1)}{2} \rfloor - \lfloor \frac{\beta}{2} \rfloor$,
    \item $a+c+y \ge \lfloor \frac{\mu m}{2} \rfloor - \lfloor \frac{\beta}{2} \rfloor$, and
    \item if $a=0$, then $c+y \equiv \lfloor \frac{\mu m}{2} \rfloor - \lfloor \frac{\beta}{2} \rfloor \pmod{3} $.
    \end{itemize}
\item If $n=10$, then $\mu K_{n \times m}$ admits a 2-factorization of type
$\Big[ [10m]^{\la a \ra}, [7m,3m]^{\la b \ra},[6m,4m]^{\la c \ra},$ $[5m,5m]^{\la d \ra},[4m,3m,3m]^{\la e \ra}, mT'_1,\ldots,mT'_{y}, T_1,\ldots,T_{\lfloor \frac{\beta}{2} \rfloor} \Big]$ for all  non-negative integers $a$, $b,c,d,e$ such that
    \begin{itemize}
    \item $a+b+c+d+e+y=\lfloor \frac{\mu m(n-1)}{2} \rfloor - \lfloor \frac{\beta}{2} \rfloor$ and
    \item $a+c+y \ge \lfloor \frac{\mu m}{2} \rfloor - \lfloor \frac{\beta}{2} \rfloor$.
    \end{itemize}
\end{enumerate}
\end{cor}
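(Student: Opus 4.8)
The plan is to combine Corollary~\ref{cor:layers-small-even-n} with Theorem~\ref{the:layers2}, in exactly the way Corollary~\ref{cor:layers-small-even-nm} combines Corollary~\ref{cor:layers-small-even-n} with detachment, the new ingredient being that the odd-degree layers in Theorem~\ref{the:layers2} supply the $\lfloor\frac{\beta}{2}\rfloor$ extra 2-factors $T_1,\ldots,T_{\lfloor\frac{\beta}{2}\rfloor}$, whose cycle lengths need not be multiples of $m$. I would apply Theorem~\ref{the:layers2} with $\ell=\frac{\mu+\beta}{2}$ layers (an integer since $\beta\equiv\mu\pmod 2$, with $\frac{\mu-\beta}{2}\ge 0$ since $\beta\le\mu$): take $\mu_i=1$ for $i=1,\ldots,\beta$, $\mu_i=2$ for $i=\beta+1,\ldots,\frac{\mu+\beta}{2}$, and $x_i=1$ throughout. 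As $m$ is odd and $n$ even, $\mu_im(n-1)$ is odd exactly when $\mu_i=1$, so ${\cal O}=\{1,\ldots,\beta\}$, the quantity ``$\beta$'' of Theorem~\ref{the:layers2} equals $\sum_{i\in{\cal O}}x_i=\beta$, and $\sum_i x_i\mu_i=\beta+2\cdot\frac{\mu-\beta}{2}=\mu$. Providing for each $i$ a 2-factorization ${\cal T}_i$ of $\mu_im K_n$ (with the correct number $\lfloor\frac{\mu_im(n-1)}{2}\rfloor$ of 2-factors) and the prescribed refinements $T_1,\ldots,T_{\lfloor\frac{\beta}{2}\rfloor}$ of $[(2m)^{\la n/2\ra}]$ for the extra slots, Theorem~\ref{the:layers2} then outputs a 2-factorization of $\mu K_{n\times m}$ of type $\bigsqcup_{i=1}^{\ell} m{\cal T}_i \sqcup [T_1,\ldots,T_{\lfloor\frac{\beta}{2}\rfloor}]$. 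It remains to pick the ${\cal T}_i$ so that $\bigsqcup_i{\cal T}_i$ is the unscaled part of the target type, e.g.\ for $n=6$ so that $\bigsqcup_i{\cal T}_i=[[6]^{\la a\ra},[3,3]^{\la b\ra},T'_1,\ldots,T'_y]$.

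Each ${\cal T}_i$ is furnished by Corollary~\ref{cor:layers-small-even-n} with $\lambda=\mu_im\in\{m,2m\}$, so the remaining work is to distribute the required 2-factor types over the $\frac{\mu+\beta}{2}$ layers. This is controlled by a few identities: the per-layer 2-factor counts $\lfloor\frac{\mu_im(n-1)}{2}\rfloor$ sum to $\lfloor\frac{\mu m(n-1)}{2}\rfloor-\lfloor\frac{\beta}{2}\rfloor$; the per-layer caps $\mu_im$ on the number of $[3,3]$-factors (for $n=6$) sum to $\mu m$; the per-layer caps $\lfloor\frac{\mu_im}{2}\rfloor$ on the number of types admissible for $2K_n$ but not $K_n$ sum to $\lfloor\frac{\mu m}{2}\rfloor-\lfloor\frac{\beta}{2}\rfloor$; and the per-layer lower bounds $\lfloor\frac{\mu_im}{2}\rfloor$ for $a_i+c_i+y_i$ (for $n\in\{8,10\}$) again sum to $\lfloor\frac{\mu m}{2}\rfloor-\lfloor\frac{\beta}{2}\rfloor$. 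Each such sum coincides with the corresponding global quantity in the statement, so every global constraint is met once the types are spread greedily; moreover, within a single layer the sum of the $[3,3]$-cap and the cap on the $T'$-type slots is strictly below that layer's 2-factor count, so these two bounds never interact, and the unused slots may be filled with $[6m]$-cycles (resp.\ $[8m]$, $[10m]$), whose number is then forced to equal $a$. For $n=4,6,10$ this settles the claim, and I would write it out case by case, paralleling the proof of Corollary~\ref{cor:layers-small-even-n}.

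The one genuinely delicate point, and the step I expect to be the main obstacle, is $n=8$: Corollary~\ref{cor:layers-small-even-n}(iii) allows a layer with no $[8]$-factor only when $c_i+y_i\equiv\lfloor\frac{\mu_im}{2}\rfloor\pmod 3$, and these local congruences must be reconciled with the prescribed totals. When $a>0$ I would keep at least one layer with $a_i\ge 1$ to act as a buffer absorbing corrections to the $c_i$'s; this works because in each layer the admissible values of $c_i$ fill an interval of length at least $3\mu_im\ge 9$ and so realize all residues modulo $3$. When $a=0$ every layer must meet its congruence, and summing them shows that the hypothesis $c+y\equiv\lfloor\frac{\mu m}{2}\rfloor-\lfloor\frac{\beta}{2}\rfloor\pmod 3$ imposed in the statement is precisely what is needed; combined with $a+c+y\ge\lfloor\frac{\mu m}{2}\rfloor-\lfloor\frac{\beta}{2}\rfloor$, it lets one write $c_i+y_i=\lfloor\frac{\mu_im}{2}\rfloor+3t_i$ with $t_i\ge 0$ and $\sum_i t_i$ the required value, after which $c_i$ and $y_i$ separate greedily. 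Finally I would dispatch the degenerate cases $\mu=1$ and $\beta=\mu$, where one layer class is empty and the argument collapses to a single use of Corollary~\ref{cor:layers-small-even-n} followed by Corollary~\ref{cor:direct}(iii).
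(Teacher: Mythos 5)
Your proposal is correct and follows essentially the same route as the paper: partition $\mu$ into layer multiplicities exactly $\beta$ of which are odd, supply each layer with a 2-factorization from Corollary~\ref{cor:layers-small-even-n}, and assemble via Theorem~\ref{the:layers2}, using the same floor-sum identities $\sum_i \lfloor \mu_i m(n-1)/2\rfloor = \lfloor \mu m(n-1)/2\rfloor - \lfloor \beta/2\rfloor$ and $\sum_i \lfloor \mu_i m/2\rfloor = \lfloor \mu m/2\rfloor - \lfloor \beta/2\rfloor$. The paper writes $\mu$ as a sum of $\beta$ odd parts rather than your $\beta$ ones plus $\frac{\mu-\beta}{2}$ twos --- an immaterial difference --- and, as you anticipate, it spends most of its effort on the $n=8$ distribution (including a minimality argument on how the $y_i$ are spread, needed to keep $c_i \ge 0$ when a layer with $a_i=1$ has $y_i=\lfloor \mu_i m/2\rfloor$ and $c$ sits at its lower bound), a subtlety your buffer/greedy sketch would have to reproduce when written out in full.
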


\begin{proof}
Since $\beta \le \mu$ and $\beta \equiv \mu \pmod{2}$, we can write $\mu = \sum_{i=1}^{\beta} \mu_i$ for positive odd integers $\mu_i$ (for example, $\mu_1=\ldots=\mu_{\beta-1}=1$ and $\mu_{\beta}=\mu-\beta+1$). Observe that for all $i=1,\ldots,\beta$, since $\mu_i m$ is odd and $n$ is even,
$$\left\lfloor \frac{\mu_i m(n-1)}{2} \right\rfloor - \left\lfloor \frac{\mu_i m}{2} \right\rfloor = \frac{\mu_i m(n-1)-1}{2}-\frac{\mu_i m-1}{2}=\mu_i m \frac{n-2}{2}.$$
Moreover, since  $\beta \equiv \mu \pmod{2}$,
$$\left\lfloor \frac{\mu m(n-1)}{2} \right\rfloor - \left\lfloor \frac{\beta}{2} \right\rfloor =\frac{\mu m(n-1)-\beta}{2}=
\sum_{i=1}^{\beta} \frac{\mu_i m(n-1)-1}{2}=\sum_{i=1}^{\beta} \left\lfloor \frac{\mu_i m(n-1)}{2} \right\rfloor,$$
and similarly,
$$\left\lfloor \frac{\mu m}{2} \right\rfloor - \left\lfloor \frac{\beta}{2} \right\rfloor =\sum_{i=1}^{\beta} \left\lfloor \frac{\mu_i m}{2} \right\rfloor.$$

We shall use Theorem~\ref{the:layers2} with $\ell=\beta$ and all $x_i=1$.

\begin{enumerate}[(i)]
\item Assume $n=4$ and $a+y=\lfloor \frac{\mu m(n-1)}{2} \rfloor - \lfloor \frac{\beta}{2} \rfloor$. Since $0 \le y \le \lfloor \frac{\mu m}{2} \rfloor-\lfloor \frac{\beta}{2} \rfloor$ and $\lfloor \frac{\mu m}{2} \rfloor-\lfloor \frac{\beta}{2} \rfloor=\sum_{i=1}^{\beta} \left\lfloor \frac{\mu_i m}{2} \right\rfloor$, we can write $y=\sum_{i=1}^{\beta} y_i$, where each $y_i$ is a non-negative integer with $y_i \le \lfloor \frac{\mu_i m}{2} \rfloor$. For $i=1,\ldots,\beta$, we let $a_i=\lfloor \frac{\mu_i m(n-1)}{2} \rfloor - y_i$, so that $a=\sum_{i=1}^{\beta} a_i$. Relabel $T'_1,\ldots,T'_y$ so that $[T'_1,\ldots,T'_y]= \sqcup_{i=1}^{\beta} [T'_{i,1},\ldots,T'_{i,y_i}]$. By Corollary~\ref{cor:layers-small-even-n}(i), for each $i=1,\ldots,\beta$, since $a_i+y_i=\lfloor \frac{\mu_i m(n-1)}{2} \rfloor$,
    there exists a 2-factorization of $\mu_i m K_n$ of type $\Big[ [4]^{\la a_i \ra}, T'_{i,1},\ldots,T'_{i,y_i} \Big]$. Hence by Theorem~\ref{the:layers2}, $\mu K_{n \times m}$ admits a 2-factorization of type
    $$ \sqcup_{i=1}^{\beta} \Big[ [4m]^{\la a_i \ra}, mT'_{i,1},\ldots,mT'_{i,y_i} \Big] \sqcup
    \Big[ T_1,\ldots,T_{\lfloor \frac{\beta}{2} \rfloor} \Big]
    = \Big[ [4m]^{\la a \ra}, mT'_{1},\ldots,mT'_{y},T_1,\ldots,T_{\lfloor \frac{\beta}{2} \rfloor} \Big].$$

\item Assume $n=6$ and integers $a,b,y$ satisfy the conditions of this case. Similarly to Case (i), by Corollary~\ref{cor:layers-small-even-n}(ii) and Theorem~\ref{the:layers2}, it suffices to find non-negative integers $y_1,\ldots,y_{\beta}$, $a_1,\ldots,a_{\beta}$, and $b_1,\ldots,b_{\beta}$ such that $y=\sum_{i=1}^{\beta} y_i$, $a=\sum_{i=1}^{\beta} a_i$, and $b=\sum_{i=1}^{\beta} b_i$, and for each $i=1,\ldots,\beta$ the following hold: $ y_i \le \lfloor \frac{\mu_i m}{2} \rfloor$,
    $a_i+b_i+y_i=\lfloor \frac{\mu_i m(n-1)}{2} \rfloor$, and
    $b_i \le \mu_i m$. This is easy to accomplish: we first choose suitable $y_1,\ldots,y_{\beta}$ so that $y=\sum_{i=1}^{\beta} y_i$ and each $y_i \le \lfloor \frac{\mu_i m}{2} \rfloor$, then suitable $b_1,\ldots,b_{\beta}$ so that $b=\sum_{i=1}^{\beta} b_i$ and each $b_i \le \mu_i m$. For each $i$, we then let $a_i=\lfloor \frac{\mu_i m(n-1)}{2} \rfloor-b_i-y_i$, resulting in $a=\sum_{i=1}^{\beta} a_i$.

\item Assume $n=8$ and integers $a,b,c,y$ satisfy the conditions of this case. By Corollary~\ref{cor:layers-small-even-n}(iii) and Theorem~\ref{the:layers2}, it suffices to find non-negative integers $y_i$, $a_i$, $b_i$, and $c_i$, for $i=1,\ldots,\beta$, such that $y=\sum_{i=1}^{\beta} y_i$, $a=\sum_{i=1}^{\beta} a_i$, $b=\sum_{i=1}^{\beta} b_i$,  $c=\sum_{i=1}^{\beta} c_i$, and for each $i$ the following hold: $ y_i \le \lfloor \frac{\mu_i m}{2} \rfloor$,
    $a_i+b_i+c_i+y_i=\lfloor \frac{\mu_i m(n-1)}{2} \rfloor$,
    $a_i+c_i+y_i \ge \lfloor \frac{\mu_i m}{2} \rfloor$, and
    if $a_i=0$, then $c_i+y_i \equiv \lfloor \frac{\mu_i m}{2} \rfloor \pmod{3} $. This is achieved as follows. We first choose suitable $y_1,\ldots,y_{\beta}$ so that $y=\sum_{i=1}^{\beta} y_i$ and each $y_i \le \lfloor \frac{\mu_i m}{2} \rfloor$.

    {\sc Case $a \ge \beta$.} By the assumptions on $a$, $c$, and $y$, we can choose $z_1,\ldots,z_{\beta}$ such that $a+c=\sum_{i=1}^{\beta} z_i$ and
    $\max\{1, \lfloor \frac{\mu_i m}{2} \rfloor-y_i\} \le z_i \le \lfloor \frac{\mu_i m(n-1)}{2} \rfloor -y_i$ for all $i$. Then, for $i=1,\ldots,\beta$, we choose $a_i$ with $1\le a_i \le z_i$ so that $a=\sum_{i=1}^{\beta} a_i$. Finally, for each $i$, we let $c_i=z_i-a_i$ and $b_i=\lfloor \frac{\mu_i m(n-1)}{2} \rfloor-(a_i+c_i+z_i)$.

    {\sc Case $a=0$.} First, let $a_1=\ldots=a_{\beta}=0$. Next, choose $c_1,\ldots,c_{\beta}$ such that $c=\sum_{i=1}^{\beta} c_i$ and for all $i$, $\lfloor \frac{\mu_i m}{2} \rfloor -y_i \le c_i \le \lfloor \frac{\mu_i m(n-1)}{2} \rfloor -y_i$ and
    $c_i-(\lfloor \frac{\mu_i m}{2} \rfloor -y_i) \equiv 0  \pmod{3}$. The latter requirement can be fulfilled because
    $\lfloor \frac{\mu_i m(n-1)}{2} \rfloor -  \lfloor \frac{\mu_i m}{2} \rfloor \equiv 0 \pmod{3}$ and because, by assumption, $c-(\sum_{i=1}^{\beta} \lfloor \frac{\mu_i m}{2} \rfloor -y) \equiv 0 \pmod{3}$. Finally, for each $i$, let $b_i=\lfloor \frac{\mu_i m(n-1)}{2} \rfloor-(a_i+c_i+z_i)$.

    {\sc Case $1 \le a < \beta$.}
    Let $a_1=\ldots=a_{a}=1$ and $a_{a+1}=\ldots=a_{\beta}=0$.
    Next, choose non-negative integers $c'_1,\ldots,c'_{\beta}$ such that
    $\sum_{i=1}^{\beta} c'_i=a+c+y- \left( \lfloor \frac{\mu m}{2} \rfloor - \lfloor \frac{\beta}{2} \rfloor\right)$ and
    $c'_i \le \lfloor \frac{\mu_i m(n-1)}{2} \rfloor - \lfloor \frac{\mu_i m}{2} \rfloor$. This is possible since $a+c+y - \left( \lfloor \frac{\mu m}{2} \rfloor - \lfloor \frac{\beta}{2} \rfloor \right) \ge 0$ and
    \begin{eqnarray*}
    a+c+y - \left( \left\lfloor \frac{\mu m}{2} \right\rfloor - \left\lfloor \frac{\beta}{2} \right\rfloor \right) & \le & \left( \left\lfloor \frac{\mu m (n-1)}{2} \right\rfloor - \left\lfloor \frac{\beta}{2} \right\rfloor \right) - \left( \left\lfloor \frac{\mu m}{2} \right\rfloor - \left\lfloor \frac{\beta}{2} \right\rfloor \right) \\
    & = & \sum_{i=1}^{\beta} \left\lfloor \frac{\mu_i m(n-1)}{2} \right\rfloor - \sum_{i=1}^{\beta} \left\lfloor \frac{\mu_i m}{2} \right\rfloor.
    \end{eqnarray*}
    In addition, since $\lfloor \frac{\mu_i m(n-1)}{2} \rfloor - \lfloor \frac{\mu_i m}{2} \rfloor \equiv 0 \pmod{3}$ and $a \ge 1$, we can choose $c'_i \equiv 0 \pmod{3}$ for all $i=a+1,\ldots,\beta$. Moreover, we show that we can choose $c'_i \ge 1$ for all $i$ such that $a_i=1$ and $y_i=\left\lfloor \frac{\mu_i m}{2} \right\rfloor$. Let $\gamma$ be the number of such indices. We may assume that $y_1,\ldots,y_{\beta}$ were chosen so that $\gamma$ is minimum; that is, we have $y_i=\left\lfloor \frac{\mu_i m}{2} \right\rfloor$ for all $i \ge a+1$ and $y = \sum_{i=1}^{\beta} \left\lfloor \frac{\mu_i m}{2} \right\rfloor -(a-\gamma)$. Hence
    $$\sum_{i=1}^{\beta} c_i'=a+c+y - {\textstyle \left( \left\lfloor \frac{\mu m}{2} \right\rfloor - \left\lfloor \frac{\beta}{2} \right\rfloor
    \right)}
    = a+c+  \sum_{i=1}^{\beta} \left\lfloor \frac{\mu_i m}{2} \right\rfloor -a + \gamma -
    {\textstyle \left( \left\lfloor \frac{\mu m}{2} \right\rfloor - \left\lfloor \frac{\beta}{2} \right\rfloor
    \right)}
    = c+ \gamma \ge \gamma.$$
    Thus, indeed, $c_i' \ge 1$ can be chosen for the required $\gamma$ indices $i$.

    Now, for all $i=1,\ldots,\beta$, let $c_i=c'_i + \left\lfloor \frac{\mu_i m}{2} \right\rfloor-a_i-y_i$. Since $c'_i \ge 1$ whenever $a_i=1$ and $y_i=\left\lfloor \frac{\mu_i m}{2} \right\rfloor$, we conclude that $c_i \ge 0$.
Observe that if $a_i=0$, then $c_i+y_i=c'_i + \left\lfloor \frac{\mu_i m}{2} \right\rfloor \equiv \left\lfloor \frac{\mu_i m}{2} \right\rfloor \pmod{3}$.
    Moreover,
    $$a_i+c_i+y_i = a_i + (c'_i + \left\lfloor \frac{\mu_i m}{2} \right\rfloor-a_i-y_i) + y_i = c'_i + \left\lfloor \frac{\mu_i m}{2} \right\rfloor,$$
    so $\left\lfloor \frac{\mu_i m}{2} \right\rfloor \le a_i+c_i+y_i \le \left\lfloor \frac{\mu_i m(n-1)}{2} \right\rfloor$.
    Finally, let
    $b_i=\lfloor \frac{\mu_i m(n-1)}{2} \rfloor - (a_i+c_i+y_i)$.

Thus, in all cases, the required integers $y_i$, $a_i$, $b_i$, and $c_i$, for $i=1,\ldots,\beta$, can be found, and the result follows from Corollary~\ref{cor:layers-small-even-n}(iii) and Theorem~\ref{the:layers2}.

\item Assume $n=10$ and integers $a,b,c,d,e,y$ satisfy the conditions of this case. It is easy to see that there exist non-negative integers $y_i$, $a_i$, $b_i$, $c_i$, $d_i$, and $e_i$, for $i=1,\ldots,\beta$,
     such that $y=\sum_{i=1}^{\beta} y_i$, $a=\sum_{i=1}^{\beta} a_i$, $b=\sum_{i=1}^{\beta} b_i$, $c=\sum_{i=1}^{\beta} c_i$, $d=\sum_{i=1}^{\beta} d_i$, and $e=\sum_{i=1}^{\beta} e_i$,
     and for each $i$ the following hold: $ y_i \le \lfloor \frac{\mu_i m}{2} \rfloor$,
    $a_i+b_i+c_i+d_i+e_i+y_i=\lfloor \frac{\mu_i m(n-1)}{2} \rfloor$, and
    $a_i+c_i+y_i \ge \lfloor \frac{\mu_i m}{2} \rfloor$.
    The result then follows by Corollary~\ref{cor:layers-small-even-n}(iv) and Theorem~\ref{the:layers2},
\end{enumerate}
\end{proof}

\section{Final Remarks}

We first briefly mention another application of detachment, namely, to cycle frames.

A {\em $(L,\lambda)$-cycle frame of type $m^n$} is a holey 2-factorization of $\lambda K_{n \times m}$ such that each holey 2-factor consists of cycles of a length in the set $L$. Thus, a holey 2-factorization  of type $[T_1,\ldots,T_k]$ for the graph of $\lambda K_{n \times m}$ is an $(L,\lambda)$-cycle frame of type $m^n$, where $L=\{ \ell: \ell \in T_i \mbox{ for some } i \}$. For uniform cycle frames, we
write simply $(\ell,\lambda)$-cycle frame instead of $(\{\ell\},\lambda)$-cycle
frame. Note that an $(\ell,\lambda)$-cycle frame of type $m^n$ is equivalent to a
holey 2-factorization of $\lambda K_{n \times m}$ of type $[[\ell^\ast]^\ast]$. The
existence problem for uniform cycle frames and $(\{3,5\},\lambda)$-cycle frames of
type $m^n$ were completely settled in \cite{MR3632402} and \cite{MR2961229},
respectively.

By Corollary~\ref{cor:direct}(iv), if $\lambda m K_n$ admits an almost 2-factorization of type $[T_1,\ldots,T_k]$, then $\lambda K_{n \times m}$ admits a holey 2-factorization of type $[mT_1,\ldots,mT_k]$. Combining this statement with the result
of  \cite{MR1351903} on uniform almost 2-factorizations of  complete multigraphs, we obtain $(m\ell,\lambda)$-cycle frames of type $m^n$ whenever $\lambda m$ is even and $\ell$ divides $n-1$. This is not a new result, however,  its proof is much simpler than the proof of \cite{MR3632402}, which is heavily algebraic and
relies on many previous results.  It would be interesting to obtain other existence results on almost 2-factorizations of complete multigraphs, as these would immediately lead to new results on cycle frames.

To summarize --- in this paper, we developed a general framework for constructing new resolvable cycle decomposition of complete multigraphs and complete equipartite multigraphs by combining the techniques of detachment and layering. Numerous new resolvable cycle decompositions were obtained by applying this method to existing results. However, our approach does have some limitations.
Layering of complete graphs inevitably leads to 2-factorizations with 2-factor types inherited from seed 2-factorizations, plus additional arbitrary bipartite 2-factor types when the order is even. Even more restrictive is detachment;  applied to $\lambda m K_{n}$, it results in cycle decompositions of $\lambda K_{n \times m}$ with all cycles of length divisible by $m$. (The only exception is Theorem~\ref{the:layers2}, where detachment is applied to complete equipartite multigraphs and combined with layering.)
It would thus be desirable, and we think it might be possible, to modify our detachment technique so that it yields cycle decompositions of $\lambda K_{n \times m}$ without the restriction that the resulting cycle lengths all be multiples of $m$. We leave this idea as an open problem.

\begin{center}
{\large \bf Acknowledgement}
\end{center}

The  authors' research is  supported by NSA Grant H98230-16-1-0304 and NSERC Discovery Grant RGPIN-2016-04798, respectively.
\medskip

\footnotesize{\bibliographystyle{plain}

}

\end{document}